\numberwithin{equation}{section}
\DeclareMathAlphabet{\pazocal}{OMS}{zplm}{m}{n}
\newcommand{\calO}{\pazocal{O}}
\newcommand{\calR}{\pazocal{R}}
\newcommand{\calW}{\pazocal{W}}
\renewcommand\part{%
  \if@openright
    \cleardoublepage
  \else
    \clearpage
  \fi
  \thispagestyle{empty}%
  \if@twocolumn
    \onecolumn
    \@tempswatrue
  \else
    \@tempswafalse
  \fi
  \null\vfil
  \secdef\@part\@spart}
\newtheoremstyle{poeschelstyle}
{10pt} 
{10pt} 
{\addtolength{\leftskip}{0.5cm}\itshape} 
{-0.5cm} 
{\bfseries} 
{}
{ } 
{\settowidth{\numlength}{\thmnumber{#2}\quad}\hspace*{-\the\numlength}\thmnumber{#2}\quad\ifthenelse{\isempty{#3}}{\thmname{#1}}{\thmname{#3}} \hspace{1em}}
\newtheorem{defi}{Definition}[section]
\newtheorem{theorem}[defi]{Theorem}
\newtheorem{lem}[defi]{Lemma}
\newcommand{\N}{\ensuremath{\mathbb{N}}}
\newcommand{\Z}{\ensuremath{\mathbb{Z}}}
\newcommand{\R}{\ensuremath{\mathbb{R}}}
\newcommand{\norm}[2][]{\ensuremath{\left\|#2\right\|_{#1}}} 
\newcommand{\snorm}[2][]{\ensuremath{|#2|_{#1}}}
\newcommand{\snormlr}[2][]{\ensuremath{\left|#2\right|_{#1}}}
\newcommand{\snormleftright}[2][]{\ensuremath{\left|#2\right|_{#1}}}
\newcommand{\scalarprod}[2]{\ensuremath{\left\langle{#1,#2}\right\rangle}} 
\newcommand{\kdummy}{\ensuremath{\kappa^\text{dummy}}}
\newcommand{\lambdam}{\ensuremath{\lambda_\text{met}}}
\newcommand{\lambdac}{\ensuremath{\lambda_\text{cer}}}
\newcommand{\kappam}{\ensuremath{\kappa_\text{met}}}
\newcommand{\kappac}{\ensuremath{\kappa_\text{cer}}}
\renewcommand{\det}{\ensuremath{\operatorname{det}}}
\renewcommand{\div}{\ensuremath{\operatorname{div}}}
\newcommand{\trace}{\ensuremath{\operatorname{tr}}}
\newcommand{\supp}{\ensuremath{\operatorname{supp}}}
\newcommand{\lpert}{\ensuremath{\lambda_{\text{pert}}}}
\newcommand{\mpert}{\ensuremath{\mu_{\text{pert}}}}
\newcommand{\uapprox}{\ensuremath{u_\text{approx}}}
\newcommand{\ldummy}{\ensuremath{\lambda^\text{dummy}}}
\newcommand{\lepsdummy}{\ensuremath{\lambda^{\varepsilon,\text{dummy}}}}
\newcommand{\pushright}[1]{\ifmeasuring@#1\else\omit\hfill$\displaystyle#1$\fi\ignorespaces}
\newcommand{\pushleft}[1]{\ifmeasuring@#1\else\omit$\displaystyle#1$\hfill\fi\ignorespaces}
\begin{document}
\newlength{\numlength}	

\title{Analysis of the embedded cell method for the numerical homogenization of metal-ceramic composite materials}
\author{\renewcommand{\thefootnote}{\arabic{footnote}} Wolf-Patrick D\"ull$^{1}$, Bastian Hilder$^{1}$, Guido Schneider$^{1}$}
\date{\today}

\footnotetext[1]{Institut f\"ur Analysis, Dynamik und  Modellierung, 
       Universit\"at Stuttgart, Pfaffenwaldring 57, 70569 Stuttgart, Germany,
duell@mathematik.uni-stuttgart.de, bastian.hilder@gmx.de, guido.schneider@mathematik.uni-stuttgart.de}

\maketitle

\begin{abstract}
In this paper, we analyze the embedding cell method, an algorithm which has been developed 
for the numerical homogenization of metal-ceramic composite materials. We show the convergence of the iteration scheme of this algorithm and the coincidence of the material properties predicted by the limit with the effective material properties provided by the analytical homogenization theory in three situations, namely for a one dimensional linear elasticity model,
a simple one dimensional plasticity model and a two dimensional model of linear hyperelastic isotropic materials with constant shear modulus and slightly varying first Lamé parameter.
\\[3mm]
{\bf Keywords\;} linear elasticity, composite materials, numerical homogenization, error analysis
\\[3mm]
{\bf Mathematics Subject Classification (2010)\;} 74Bxx, 74Qxx, 74Sxx, 35Q74
\end{abstract}


\section{Introduction}

A fundamental goal of material science is to design materials with exactly prescribed properties,
which are optimal for a specific purpose. In principle, this issue could be addressed experimentally with the help of trial and error strategies. However, such strategies are often very expensive and time-consuming. Therefore, the development of computational simulations for
optimizing materials has become an emerging field of research in the recent years. The success of computational material design is boosted by the fact that computation speed and efficiency of hardware and high performance systems have strongly increased over the last decades.
Nevertheless, physical experiments still have to remain an integral part of the process 
of designing optimized materials, but the number of such experiments can be significantly reduced if the used computational methods are proven to be reliable. Therefore, a good mathematical understanding of the computational methods is very important.

The field of numerical mathematics provides well-understood numerical schemes
like the finite element method for the numerical solution of the partial differential equations modeling the behavior of the materials. 
However, many materials relevant for technical applications, such as composite materials,  have a microstructure and hence their material parameters vary on small length scales. To resolve the microstructure with the help of the classical finite element method, the mesh grid size has to be chosen very small such that the computational effort is very high. But since effects of microstructures often average out on macroscopic length scales, it is sufficient in many situations to compute the effective material behavior on macroscopic scales.  

An important mathematical strategy to determine the effective material behavior 
is the application of so-called homogenization processes. 
The general idea of homogenization is to replace a multiscale problem by a homogeneous problem with the same effective properties. In analytical homogenization theory, the homogeneous problem is obtained as the limit of a sequence of multiscale problems with faster and faster oscillating parameters, and formulas for the homogeneous parameters of the limit problem are provided, see, for example, \cite{CD99}.
 However, in general, it is difficult to compute the values of the homogeneous parameters explicitly. Therefore, it is necessary to develop numerical homogenization processes.

Such a numerical homogenization process is given by the so-called embedded cell method, an algorithm which was developed by Dong and Schmauder \cite{Schmauder96} in 1996 to compute the stress-strain curves for metal-ceramic composite materials or more generally for particle or fiber reinforced materials.
These materials are of great importance for the automotive industry, e.g. for rotor brakes, the  aerospace technology, e.g. for lightweight elements, or the medical technology, e.g. for implants. The basic idea of the embedded cell method is to replace the metal-ceramic composite material with its complex geometry by a so-called embedded cell consisting of a connected component of ceramic particles surrounded by 
 a metal matrix (or vice versa) which is embedded into a dummy material whose material parameters are determined by a self-consistent numerical iteration scheme. The volume ratio between the 
ceramic and the metallic part of the embedded cell is the same as in the original composite
material. The volume of the dummy material is larger than the volumes of the ceramic and the metallic part such that the influence of the geometry of the exterior boundary of the dummy material on the embedded cell can be neglected. Having determined the material parameters 
of the dummy material the strain-stress curve of the composite material consisting of the embedded cell and the dummy material is computed numerically.

Numerical experiments show that the strain-stress curves obtained via the embedded cell method are often very close to the strain-stress curves of the corresponding original composite materials measured in physical experiments. A first attempt to prove analytic convergence results can be found in \cite{Salit14}.
However, the question how the results of the iteration process are related to the exact solutions of the underlying mathematical equations and to the formulas for the effective material parameters provided by analy\-tical homogenization theory remained open. 

In general, the effective material behavior of a composite material does not only depend on the volume ratio of its phases but also on the spatial arrangement of the phases. For example, a stiff  material with inclusions of a soft material behaves differently than a soft material with inclusions of a stiff material even if the volume ratios between the stiff and the soft phase are the same for both composite materials. The formulas for the effective material behavior from analy\-tical homogenization theory take into account both the volume fraction and the spatial arrangement of the phases. In contrast, the embedded cell method in its present form respects the volume fraction of the phases but only allows that one connected component of one phase is surrounded by another phase and the dummy material. Hence, it cannot be expected that 
the strain-stress curves computed by the embedded cell method always coincide with the strain-stress curves obtained with the help of analytical homogenization theory. Therefore, it is important to investigate for which kind of composite materials and which spatial arrangements of their phases both homogenization methods yield comparable results. 

It is the aim of the present paper to prove some basic results on the convergence behavior of the embedded cell method and its relation to analytical homogenization theory. For composite materials whose material parameters vary only in one direction the effective material behavior 
is completely determined by the volume ratio of its phases. Consequently, we can prove for
a one dimensional model of linear elastic materials and a simple one dimensional plasticity model by explicit calculations that the embedded cell method converges and that the material properties predicted by the limit coincide with the effective material properties provided by analytical homogenization theory.

As a first step towards a general two dimensional theory we analyze a two dimensional model of linear hyperelastic isotropic composite materials with constant shear modulus and a first Lamé parameter which varies only by a small parameter $\varepsilon$. In this case, we prove that the embedded cell method converges and that the material properties of the limit coincide with the effective material properties from analytical homo\-genization theory
at least up to an error of order $\mathcal{O}(\varepsilon^2)$ for $\varepsilon$ tending to zero since the influence of the volume ratio of the phases on the effective material behavior dominates the effects induced by the spatial arrangement of the phases. 
The assumption of a constant shear modulus has the advantage that we only need to model a tensile test for determining the effective first Lamé parameter and no shear test for determining  the effective shear modulus such that we analyze the same experimental set-up as in \cite{Schmauder96}. Nevertheless, our result and our proof can be generalized in a straightforward manner to the case when also the shear modulus varies by $\varepsilon$. The proof of our result relies on perturbation theory, complete induction using the iteration procedure of the embedded cell method and an a priori estimate from elliptic theory being valid also for weak solutions 
of our model equations, which is the appropriate notion of solution in the case of composite materials. Moreover, we use a generalized formula for the tensile force which is also applicable to non-smooth data and weak solutions of the model equations.

Since the embedded cell method has similarities with the Hashin-Shtrikman coated sphere construction, cf. \cite{Ha62},\cite{HaShtr63}, we expect that for general two and three dimensional two-phase composite materials the effective first Lamé parameter and the effective shear modulus computed by the embedded cell method and  the effective first Lamé parameter and the effective shear modulus obtained with the help of analytical homo\-genization theory share the property that they can be bounded from above and below by the Hashin-Shtrikman bounds which are the tightest bounds possible for composite moduli of two-phase composite materials, cf. \cite{Ta09}. It is subject of further research to prove that the embedded cell method has this useful property.

The plan of the paper is as follows. In section \ref{chap:derivation}, we present the physical
model which will be the basis of our analysis as well as an existence and uniqueness result for solutions of the model equations. The boundary conditions in this model are chosen 
such that the model describes a tensile test. At the end of section \ref{chap:derivation}, we  derive the generalized formula for the tensile force. In section \ref{chap:ecm}, we present the  embedding cell method. Then we prove our convergence and correctness results for the one dimensional case in section \ref{chap:correctness_1D} and for the two dimensional case in section \ref{chap:perturbation_theory}.
\\\\
\textbf{Acknowledgments:} The research is partially supported
by the Cluster of Excellence \grqq SimTech\grqq{} at the University of Stuttgart. The authors are grateful for discussions with Siegfried Schmauder and Alexander Mielke.

\section{The model}
\label{chap:derivation}

In this section, we present the physical model which we will analyze in the subsequent sections.
The model consists of the basic equations for linear hyperelastic isotropic solids applied to the experimental set-up of a tensile test.

\subsection{Derivation of the model equations}

First, we recall the basic definitions of the theory of linear elasticity and the derivation of the basic equations for linear hyperelastic isotropic solids. Our presentation mainly follows
\cite{Cialet93}. 
Consider a solid occupying the so-called reference configuration $\Omega \subset \R^3$.
The solid is exposed to an external body force $f : \Omega \rightarrow \R^3$ and an external surface force $g : \partial \Omega \rightarrow \R^3$.
These external forces impose a deformation $y : \Omega \rightarrow \R^3$ and a displacement $u := y - Id$, where $Id$ is the identical map, respectively. The deformed domain $y(\Omega)$ is called the current configuration.
For $u \in C^1(\overline{\Omega})$ let $\nabla u$ be the displacement gradient and $\nabla^s u = (\nabla u + (\nabla u)^T)/2$ the symmetric part of the displacement gradient. For physical reasons it is reasonable to postulate that this deformation is  injective and orientation-preserving, i.e. $\det \nabla y > 0$.

We consider static problems for which the stress principle of Euler and Cauchy (cf. \cite[axiom 2.2-1]{Cialet93}) holds. The stress principle of Euler and Cauchy postulates the existence of the so-called Cauchy stress vector field
\begin{align*}
	t: \overline{y(\Omega)} \times \mathbb{S} \rightarrow \R^3
\end{align*}
with $\mathbb{S} := \{x \in \R^3 : \snorm{x} = 1\}$ such that the following holds:
\begin{enumerate}
	\item For an arbitrary subdomain $A \subset y(\Omega)$ and at any point $x \in \partial A \subset \partial y(\Omega)$ where the unit outer normal vector $n \in \mathbb{S}$ to the surface $\partial A$ exists it holds
	\begin{align}
		t(x,n) = g(y^{-1}(x)).
	\end{align}
	\item The axiom of force balance holds, i.e. for any subdomain $A \subset y(\Omega)$ we have
	\begin{align}
		\int_{A} f(y^{-1}(x)) dx + \int_{\partial A} t(x,n) do = 0,
	\end{align}
	where $do$ is the surface measure on $\partial A$.
	\item The axiom of moment balance holds, i.e. for any subdomain $A \subset \Omega$ we have
	\begin{align}
		\int_{A} x \times f(y^{-1}(x)) dx + \int_{\partial A} x \times t(x,n) do = 0,
	\end{align}
	where $\times: \R^3 \times \R^3 \rightarrow \R^3$ is the cross product.
\end{enumerate}

The stress principle of Euler and Cauchy implies Cauchy's theorem (cf. \cite[theorem 2.3-1]{Cialet93}), which states the existence of a symmetric tensor, the so-called Cauchy stress tensor $\sigma : y(\Omega) \rightarrow \R^{3 \times 3}$ such that
\begin{align}
	t(x, n) = \sigma(x) n
\end{align}
for all $x \in y(\Omega), n \in \mathbb{S}$. 
Since the Cauchy stress tensor is defined on the deformed configuration, which has to be determined first, we will use another tensor, which refers to the reference configuration, which is known a priori: the so-called first Piola-Kirchhoff stress tensor $P$, which is related to the Cauchy stress tensor $\sigma$ by the Piola transformation
\begin{align}
	P(x) := (\det \nabla y(x)) \sigma(y(x)) (\nabla y(x))^{-T}.
\end{align}

To take into account the different response of different kind of materials to external forces
we will need constitutive equations to characterize the material behavior, e.g. if the material is stiff or flexible, elastic or plastic. In the present paper, we mainly consider elastic materials. A material is called elastic if it returns back to its undeformed state after removing the external forces. Mathematically, this behavior can be described as follows.

\begin{defi}[elastic material, {\cite[p. 89]{Cialet93}}]
A material response is called elastic if a mapping $\widehat{P} : \overline{\Omega} \times \R^{3 \times 3}_{>} \rightarrow \R^{3 \times 3}$, where $\R^{3 \times 3}_{>}$ is the set of real-valued $3\times3$-matrices with positive determinant, exists such that
\begin{align}
	P(x) = \widehat{P}(x, \nabla y(x))
\end{align}
for all $x \in \overline{\Omega}$. $\widehat{P}$ is then called the response function.
\end{defi}

From now on, we make three restrictions on the possible choices of the response function $\widehat{P}$. 
The first one is the general concept of material frame-indifference.
It states that the stress tensors as physical objects have to be invariant with respect to rotations of the observer.
This can be summarized in the axiom of material frame-indifference (cf. \cite[axiom 3.3-1]{Cialet93}): Let Q $\in$ SO(3) be arbitrarily chosen, then
\begin{align}
	t(Qy(x), Qn) = Qt(y(x),n)	
\end{align}
holds for all $x \in \overline{\Omega}$ and $n \in \mathbb{S}$.
The property of material frame-indifference can also be characterized with the help of the following theorem.
\begin{theorem}[material frame-indifference, {\cite[theorem 3.3-1]{Cialet93}}]
	Let $\widehat{P}$ be the response function to the first Piola-Kirchhoff stress tensor describing an elastic material behavior.
	Then the material satisfies the axiom of material frame-indifference if and only if
	\begin{align}
		\widehat{P}(x,QF) = Q \widehat{P}(x,F)	
	\end{align}
	holds for all $x \in \overline{\Omega}$, $F \in \R^{3 \times 3}_{>}$ and $Q \in SO(3)$.
\end{theorem}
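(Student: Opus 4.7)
The plan is to establish both implications via the chain \emph{frame-indifference axiom} $\Leftrightarrow$ \emph{transformation law for} $\sigma$ $\Leftrightarrow$ \emph{transformation law for} $P$ $\Leftrightarrow$ \emph{transformation law for} $\widehat{P}$, exploiting the two previously stated ingredients: Cauchy's theorem ($t(x,n) = \sigma(x)n$) and the Piola transformation. The only additional observation needed is that, given $x \in \overline{\Omega}$, every $F \in \R^{3\times 3}_{>}$ is realised as $\nabla y(x)$ for some admissible deformation (take the affine map $y(x') := F x' + c$ with $c$ chosen so that orientation is preserved globally), so that an identity obtained for all gradients $\nabla y(x)$ automatically holds for all $F \in \R^{3\times 3}_{>}$.

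For the direction ``$\Rightarrow$'', fix $Q \in SO(3)$ and a deformation $y$, and let $y^Q := Q y$ be the rotated deformation, with Cauchy stress tensor $\sigma^Q$ and first Piola–Kirchhoff stress tensor $P^Q$ defined on $y^Q(\Omega) = Q y(\Omega)$ and on $\Omega$, respectively. Cauchy's theorem applied in the rotated configuration gives $t^Q(Qy(x),Qn) = \sigma^Q(Qy(x))\,Qn$, while in the original configuration $t(y(x),n) = \sigma(y(x))n$. Inserting both into the axiom $t^Q(Qy(x),Qn) = Q\,t(y(x),n)$ and varying $n \in \mathbb{S}$ yields $\sigma^Q(Qy(x))\,Q = Q\,\sigma(y(x))$, hence $\sigma^Q(Qy(x)) = Q\,\sigma(y(x))\,Q^T$. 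Now apply the Piola transformation to $y^Q$: since $\nabla y^Q(x) = Q\nabla y(x)$, $\det(Q\nabla y(x)) = \det \nabla y(x)$ and $(Q\nabla y(x))^{-T} = Q(\nabla y(x))^{-T}$ (using $Q^{-T}=Q$), a direct computation yields $P^Q(x) = Q\,P(x)$. Elasticity then gives $\widehat{P}(x, Q\nabla y(x)) = P^Q(x) = QP(x) = Q\,\widehat{P}(x,\nabla y(x))$, and the preliminary observation extends this identity from $F = \nabla y(x)$ to all $F \in \R^{3\times 3}_{>}$.

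For the direction ``$\Leftarrow$'', the chain is run backwards: assuming $\widehat{P}(x,QF) = Q\,\widehat{P}(x,F)$ for all $x,F,Q$, one has $P^Q(x) = QP(x)$ by definition of the response function, which by the Piola transformation (reversed as above) forces $\sigma^Q(Qy(x)) = Q\,\sigma(y(x))Q^T$, which via Cauchy's theorem in both configurations translates into the axiom $t^Q(Qy(x),Qn) = Q\,t(y(x),n)$. The main technical point --- and the only place some care is required --- is the transformation law for $\sigma$ under a change of observer: one must be precise about which Cauchy stress tensor is being evaluated at which point and verify that $\sigma^Q$, interpreted as the Cauchy stress tensor of the rotated body, satisfies Cauchy's theorem with normals already written in the rotated frame. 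Once this bookkeeping is set up, the remaining steps are purely algebraic identities about $SO(3)$ matrices and the Piola transformation.
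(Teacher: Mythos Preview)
Your argument is correct and follows the classical route one finds in Ciarlet's book: translate the axiom on the Cauchy stress vector into a transformation law for the Cauchy stress tensor via Cauchy's theorem, push it through the Piola transformation using $\det(QF)=\det F$ and $(QF)^{-T}=Q F^{-T}$, and then invoke the definition of the response function together with the affine-deformation trick to pass from $\nabla y(x)$ to arbitrary $F\in\R^{3\times3}_{>}$. The reverse implication is indeed obtained by running the same chain backwards.

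Note, however, that the paper does not supply its own proof of this theorem: it is stated with an explicit citation to \cite[theorem 3.3-1]{Cialet93} and used as background material in the derivation of the model. So there is no ``paper's proof'' to compare against; your write-up is essentially a reconstruction of the argument in the cited reference, and as such it is sound.
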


The second restriction which we make is to consider only isotropic materials.
This means that the material's reaction is not depending on the direction of the external forces.
For example, a composite material with randomly distributed circular micro-structures has an isotropic behavior.
Analogously to the material frame-indifference, isotropic material behavior can also be characterized by considering the response function of the material.

\begin{defi}[isotropic material behavior, {\cite[p.106]{Cialet93}}]
	Let $\widehat{P}$ be the response function of the first Piola-Kirchhoff stress tensor.
	Then, the material behavior is called isotropic if
	\begin{align}
		\widehat{P}(x,FQ) = \widehat{P}(x,F)Q	
	\end{align}
	holds for all $x \in \overline{\Omega}$, $F \in \R^{3 \times 3}$ and $Q \in SO(3)$.
\end{defi}

The last restriction we make is to consider only hyperelastic materials.
\begin{defi}[hyperelastic material behavior, {\cite[p. 137]{Cialet93}}]
	A material response is called hyperelastic if there exists a stored energy function $\widehat{W}: \overline{\Omega} \times \R^{3 \times 3}_{>} \rightarrow \R$ such that
	\begin{align}
		\widehat{P}(x,F) = \dfrac{\partial \widehat{W}}{\partial F}(x,F)
		\label{eq:stored_energy}
	\end{align}
	is valid for all $x \in \overline{\Omega}$ and $F \in \R^{3 \times 3}_{>}$.
\end{defi}

The assumption of a hyperelastic material is often made in material modeling because it ensures the thermodynamic consistency of the material law if no dissipation is considered, i.e. for elastic materials (cf. \cite[chapter 13]{haupt02}).

For hyperelastic materials, the properties of material frame-indifference and isotropy can be characterized by the behavior of the stored energy function.
A stored energy function is called material frame-indifferent and isotropic, respectively, if the corresponding response function defined by \eqref{eq:stored_energy} is material frame-indifferent and isotropic, respectively.
For practical applications these properties are directly linked to the behavior of the stored energy function by the following theorem.

\begin{theorem}[{\cite[theorems 4.2-1 \& 4.3-1]{Cialet93}}]
	Let $\widehat{W}$ be a stored energy function of a hyperelastic material.
	Then, $\widehat{W}$ is material frame-indifferent if and only if
	\begin{align}
		\widehat{W}(x, QF) = \widehat{W}(x,F)	
	\end{align}
	holds for all $x \in \overline{\Omega}, F \in \R^{3 \times 3}$ and $Q \in SO(3)$. Furthermore, $\widehat{W}$ is isotropic if and only if
	\begin{align}
		\widehat{W}(x,FQ) = \widehat{W}(x,F)	
	\end{align}
	holds for all $x \in \overline{\Omega}, F \in \R^{3 \times 3}$ and $Q \in SO(3)$.
\end{theorem}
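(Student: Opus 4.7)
The plan is to reduce both equivalences to the identity $\widehat{P}(x,F) = \partial_F \widehat{W}(x,F)$ combined with a connectedness argument on $SO(3)$. Throughout, I would unfold the defining convention stated between the previous theorem and this one: $\widehat{W}$ being frame-indifferent or isotropic means, by definition, that the response function $\widehat{P}$ it generates is. So in each equivalence the actual task is to show that the corresponding condition on $\widehat{P}$ (which the previous theorem already translates into a multiplicative identity) is equivalent to the proposed condition on $\widehat{W}$.

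For the frame-indifference statement, the forward direction is immediate from the chain rule: fixing $x$ and $Q \in SO(3)$ and differentiating $\widehat{W}(x,QF) = \widehat{W}(x,F)$ in $F$ yields $Q^T \widehat{P}(x,QF) = \widehat{P}(x,F)$, which is exactly the characterization $\widehat{P}(x,QF) = Q\widehat{P}(x,F)$ from the previous theorem. For the converse I would assume $\widehat{P}(x,QF) = Q\widehat{P}(x,F)$ and set $G_{x,Q}(F) := \widehat{W}(x,QF) - \widehat{W}(x,F)$; another application of the chain rule gives $\partial_F G_{x,Q} = Q^T Q\widehat{P}(x,F) - \widehat{P}(x,F) = 0$. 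Since $\R^{3 \times 3}_{>}$ is connected, $G_{x,Q}$ is a constant $c(x,Q)$ in $F$. Applying the relation $\widehat{W}(x,QF) = \widehat{W}(x,F) + c(x,Q)$ twice, with $Q$ replaced successively by $Q_1$ and $Q_2$, yields the cocycle identity $c(x, Q_1 Q_2) = c(x,Q_1) + c(x,Q_2)$, so $c(x,\cdot)$ is a continuous group homomorphism $SO(3) \to (\R,+)$. Because $SO(3)$ is compact and connected, its image must be a compact connected subgroup of $\R$, hence $\{0\}$; thus $c \equiv 0$ and the desired invariance of $\widehat{W}$ follows.

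The isotropy part I would handle by the same recipe, only with multiplication on the right: differentiating $\widehat{W}(x,FQ) = \widehat{W}(x,F)$ in $F$ yields $\widehat{P}(x,FQ) Q^T = \widehat{P}(x,F)$, which is the isotropy identity for $\widehat{P}$; conversely, assuming $\widehat{P}(x,FQ) = \widehat{P}(x,F)Q$, the map $F \mapsto \widehat{W}(x,FQ) - \widehat{W}(x,F)$ has vanishing $F$-gradient on the connected set $\R^{3 \times 3}_{>}$ and is therefore a function $\tilde{c}(x,Q)$ alone, which obeys $\tilde{c}(x,Q_1 Q_2) = \tilde{c}(x,Q_1) + \tilde{c}(x,Q_2)$ and hence vanishes identically by the same compactness-connectedness argument.

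The main obstacle, such as it is, lies in the converse direction of each equivalence, namely in removing the integration constant left over after integrating the $\widehat{P}$-identity back into an identity for $\widehat{W}$. The crux is to observe that this constant depends only on $(x,Q)$ and satisfies a homomorphism law on $SO(3)$, after which the compactness and connectedness of $SO(3)$ force it to be zero. The remaining steps are routine applications of the chain rule and the definition of a hyperelastic material.
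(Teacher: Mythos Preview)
The paper does not give a proof of this theorem at all; it is quoted verbatim from Ciarlet's textbook and stated without argument. There is therefore nothing in the paper to compare your proposal against.

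That said, your argument is correct and self-contained. The forward implications are just the chain rule, and for the converse directions your key idea --- integrating the $\widehat{P}$-identity to get $\widehat{W}(x,QF)-\widehat{W}(x,F)=c(x,Q)$ on the connected set $\R^{3\times3}_>$, then observing that $c(x,\cdot)$ is a group homomorphism $SO(3)\to(\R,+)$ and hence zero --- is clean and works as stated. (In fact you do not even need continuity of $c$: since $SO(3)$ is a perfect group, every homomorphism into an abelian group is trivial.) For comparison, Ciarlet's own proofs of theorems~4.2-1 and~4.3-1 go through the polar decomposition $F=RU$ rather than a cocycle argument, showing directly that $\widehat{W}$ depends only on $U$ (equivalently, on $C=F^TF$); your route avoids polar decomposition entirely and is arguably more elementary.
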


Finally, we assume that the reference configuration of the material is a natural state, i.e. we have $P_R(x) := \widehat{P}(x,I) = 0$, where $I$ is the unit matrix.
Furthermore, let $E \in \R^{3 \times 3}$ be the Green-St.Venant strain tensor defined by
\begin{align}
	E = E(u) := \dfrac{1}{2}((\nabla y)^T \nabla y - I) = \dfrac{1}{2}(\nabla u + (\nabla u)^T + (\nabla u)^T \nabla u)	
\end{align}
and $E^\text{lin}$ its linearization given by
\begin{align}
	E^\text{lin} = \nabla^s u.
\end{align}
Then the following representation for the stored energy function can be proven, which will be the basis for the derivation of the basic equations for linear hyperelastic isotropic materials.

\begin{theorem}[{\cite[theorem 4.5-1]{Cialet93}}]
	Consider a material with hyperelastic and isotropic response and let its reference configuration be a natural state.
	Then, there exist real-valued functions $\lambda$ and $\mu$ such that the stored energy function $\widehat{W}$ of this material can be written as
	\begin{align}
		\widehat{W}(x, \nabla y) = \overline{W}(x,E) = \dfrac{\lambda(x)}{2}(\trace(E))^2 + \mu(x) \trace(E^2) + o(\norm{E}^2)
		\label{eq:representation_energy_function}
	\end{align}
for $\norm{E}^2 \to 0$, where $	\trace(M)$ denotes the trace of the matrix $M$.
\end{theorem}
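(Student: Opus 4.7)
The plan is to exploit the three standing hypotheses (frame-indifference, isotropy, natural reference state) in sequence so as to reduce $\widehat{W}$ to a scalar function of at most two invariants of $E$, and then to Taylor expand to second order around $E = 0$.

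First I would use material frame-indifference. Via the polar decomposition $\nabla y = RU$ with $R \in SO(3)$ and $U = ((\nabla y)^T \nabla y)^{1/2}$, the characterization of frame-indifferent stored energy functions (choosing $Q = R^T$) gives $\widehat{W}(x, \nabla y) = \widehat{W}(x, U)$; since $U$ is determined by $U^2 = (\nabla y)^T \nabla y = I + 2E$, this shows $\widehat{W}(x, \nabla y)$ is actually a function $\overline{W}(x, E)$ of the Green--St.~Venant tensor alone.

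Next I would incorporate isotropy. Combined with frame-indifference it translates into $\overline{W}(x, Q^T E Q) = \overline{W}(x, E)$ for every $Q \in SO(3)$, i.e.\ $\overline{W}(x, \cdot)$ is a scalar isotropic function of the symmetric tensor $E$. By the standard representation theorem for such functions there exists $\widetilde{W}$ with
\[
\overline{W}(x, E) = \widetilde{W}\bigl(x, \trace(E), \trace(E^2), \trace(E^3)\bigr),
\]
or equivalently a function of the three principal invariants of $E$. Now I would Taylor-expand $\widetilde{W}(x, \cdot)$ around $E = 0$ and keep all terms up to order $\snorm{E}^2$. Since $\trace(E)$ is of order $\snorm{E}$, $\trace(E^2)$ of order $\snorm{E}^2$, and $\trace(E^3)$ of order $\snorm{E}^3$, only three types of terms survive at quadratic order besides the constant, yielding
\[
\overline{W}(x, E) = \alpha(x) + \beta(x)\trace(E) + \gamma(x)(\trace(E))^2 + \delta(x)\trace(E^2) + o(\snorm{E}^2).
\]
The constant $\alpha(x)$ does not contribute to $\widehat{P}$ and can be normalized to zero, while the natural-state condition $\widehat{P}(x, I) = \frac{\partial \widehat{W}}{\partial F}(x, I) = 0$ forces $\beta(x) = 0$, since differentiation in $F$ at $F = I$ extracts exactly the part of $\overline{W}$ that is linear in $E$. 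Setting $\lambda(x) := 2\gamma(x)$ and $\mu(x) := \delta(x)$ then produces the asserted Lamé representation.

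The main obstacle is a clean execution of the isotropic representation step together with careful bookkeeping of the orders in $\snorm{E}$. One has to argue that although there are three algebraically independent principal invariants, only two of them, namely $(\trace(E))^2$ and $\trace(E^2)$, contribute at quadratic order, and that the natural-state hypothesis is precisely what eliminates the unique linear invariant $\trace(E)$, leaving the two Lamé terms as the sole quadratic contribution.
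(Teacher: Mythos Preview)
The paper does not actually supply a proof of this theorem; it is simply quoted from \cite[theorem 4.5-1]{Cialet93} and used as input for the subsequent modelling. Your outline is correct and follows precisely the standard argument given in Ciarlet's book: reduce to a function of $E$ via frame-indifference and polar decomposition, invoke the representation theorem for isotropic scalar functions of symmetric tensors to pass to the principal invariants, Taylor-expand about $E=0$ keeping only terms up to $\snorm{E}^2$, and use the natural-state hypothesis to kill the linear invariant $\trace(E)$. So there is no discrepancy to discuss---your approach is the one the cited reference takes.
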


The function $\lambda$ is called first Lamé parameter and the function $\mu$ is called shear mo\-du\-lus or second Lamé parameter. The values of $\lambda$ and $\mu$ depend on the given material and are determined
experimentally. 

If only small deformations are considered, it is reasonable to neglect the small-$o$ part of \eqref{eq:stored_energy} and to use the linearization of the Green-St.Venant strain tensor instead of the full nonlinear tensor. Then, the stored energy function $\widehat{W}$ can be 
approximated by
\begin{align}
	\check{W}(x,\nabla^s u) = \dfrac{\lambda(x)}{2} \trace(\nabla^s u)^2 + \mu(x) \trace((\nabla^s u)^2)
	\label{eq:stored_energy_function_approx}	
\end{align}
and the first Piola-Kirchhoff stress tensor $P$ can be approximated by
\begin{align}
	\frac{\partial\check{W}}{\partial F}(x,\nabla^s u(x)) = \lambda(x) \trace(\nabla^s u(x)) I + 2\mu(x) \nabla^s u(x).
	\label{eq:piola_approx}	
\end{align}
For so-called linear hyperelastic isotropic materials the first Piola-Kirchhoff stress tensor $P$ exactly satisfies
\begin{align}
	P(x)  = \frac{\partial\check{W}}{\partial F}(x,\nabla^s u(x)) = \lambda(x) \trace(\nabla^s u(x)) I + 2\mu(x) \nabla^s u(x),
	\label{eq:piola_lin}	
\end{align}
which is Hooke's law for linear hyperelastic isotropic materials.
\medskip

By minimizing the energy $\check{W}$ we will derive the basic equations for linear hyperelastic isotropic materials for the experiment we have in mind, namely a tensile test, see figure \ref{fig:tt}.

\begin{figure}[htbp]
	\centering
	\includegraphics[width = 1.5in]{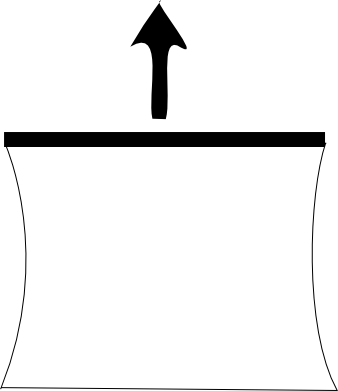}
	\caption{A tensile test}
	\label{fig:tt}
\end{figure}

Consider a fixed Cartesian coordinate system with an orthonormal basis $e_j$, $j = 1,2,3$. 
Let 
$$\Omega = (0,1)^2 \times (0,h)$$ 
for $h > 0$ and its boundary be decomposed by
$$\partial \Omega = \Gamma_1 \cup \Gamma_2 \;\, \text{ with} \;\;
\Gamma_1 := \{x \in \partial\Omega : x_2 := x \cdot e_2 \in \{0,1\}\}\;\, \text{ and} \;\;
\Gamma_2 := \partial\Omega \setminus \Gamma_1.$$

There are basically two different types of boundary conditions to model a tensile test in a physical reasonable way.
The first possibility is to fix the body on $\Gamma_1$, i.e. to chuck the body at the bottom and top end, while the rest of the body is free to move. 

The second possibility are so-called greased boundary conditions. 
Here, the body is fixed on $\Gamma_1$ only against moving in $e_2$ direction, i.e. the deformation of the body in $e_2$ direction is the only prescribed deformation and thus the body can move freely in $e_1$ and $e_3$ direction. Again on the remaining surface, the body is free to move. Hence, for a given displacement length $l > 0$, an admissible displacement $u$ should satisfy the Dirichlet boundary condition
\begin{align}
	u_2\vert_{\Gamma_1} = u_0[l] := \begin{cases}
 			l, & x_2 = 1, \\
 			0, & x_2 = 0.
 		\end{cases}
 	\label{eq:dirichlet_boundary}
\end{align}
Because even in the simplest case of a homogeneous material, the deformation beha\-vi\-or of a body with the first type of boundary conditions is quite complex and thus hard to analyze, the greased boundary conditions are often preferred.

Moreover, we introduce the so-called strain energy
\begin{align*}
	u \mapsto J[u] := \int_\Omega \check{W}(x,\nabla u) dx\,,	
\end{align*}
where $\check{W}$ is given by \eqref{eq:stored_energy_function_approx} and $u$ should satisfy the boundary condition \eqref{eq:dirichlet_boundary}. The basic equations for linear hyperelastic isotropic materials in a tensile test are the
Euler-Lagrange equations with respect to the functional $J$. 
For a minimizer $u$ of $J$ satisfying \eqref{eq:dirichlet_boundary} it holds
\begin{align*}
	J[u + \varepsilon \phi] \geq J[u] 
\end{align*}
for all $\phi \in X	$, where 
\begin{align*}
	X := C^\infty(\overline{\Omega}, \R) \times C^\infty_{\Gamma_1}(\overline{\Omega},\R) \times C^\infty(\overline{\Omega},\R)	
\end{align*}
with 
\begin{align*}
	C^\infty_{\Gamma_1}(\overline{\Omega},\R) := \{f \in C^\infty(\overline{\Omega},\R) : f \text{ vanishes in a neighborhood of } \Gamma_1\}
\end{align*}
and $\varepsilon \in (-\varepsilon_0, \varepsilon_0)$ with $\varepsilon_0 > 0$. 
Notice that $X$ is chosen such that $u + \varepsilon \phi$ also satisfies the boundary condition \eqref{eq:dirichlet_boundary}.
A necessary condition for such a minimum is 
\begin{align*}
	\left.\dfrac{d}{d\varepsilon} J[u + \varepsilon \phi]\right\vert_{\varepsilon = 0} = 0.
\end{align*}
Explicit calculations using Green's formula yield
\begin{align}
	- \div\left(\dfrac{\partial\check{W}}{\partial F}(x,\nabla u)\right) = 0	\qquad \text{ in } \Omega
	\label{eq:3D_equation}
\end{align}
with the Neumann boundary conditions
\begin{align}
	\left(\dfrac{\partial\check{W}}{\partial F}(x,\nabla u)\right)n &= 0 \qquad \text{ on } \Gamma_2, \label{eq:3D_neumann_1} \\
	\left[\left(\dfrac{\partial\check{W}}{\partial F}(x,\nabla u)\right)n\right]_i &= 0 \qquad \text{ on } \Gamma_1 \text{ for } i = 1,3. \label{eq:3D_neumann_2}
\end{align}
Then, by using \eqref{eq:stored_energy_function_approx}, we obtain 
\begin{align}
	-\div(\lambda(x) \trace(\nabla^s u(x)) I + 2 \mu(x) \nabla^s u(x)) &= 0 \qquad\qquad \text{ in } \Omega, \label{eq:DGL3D}\\
	(\lambda(x) \trace(\nabla^s u(x)) I + 2 \mu(x) \nabla^s u(x))n &= 0 \qquad\qquad \text{ on } \Gamma_2, \label{eq:3DBC1} \\
	[(\lambda(x) \trace(\nabla^s u(x)) I + 2 \mu(x) \nabla^s u(x))n]_i &= 0 \qquad\qquad \text{ on } \Gamma_1 \text{ for } i = 1,3, \label{eq:3DBC2} \\
	u_2 &= u_0[l] \qquad\,\, \text{ on } \Gamma_1, \label{eq:3DBC3}
\end{align}
which are the basic equations for linear hyperelastic isotropic solids applied to the experimental setup of a three dimensional tensile test.

Next, the above derived three dimensional model is reduced to a two dimensional one by considering the so-called plane stress state (cf. \cite[chapter 2]{Galin08}).
The plane-stress assumption, i.e. all occurring stress vectors lie in one plane, is reasonable for $h$ very small. This corresponds to a thin plate, where the thickness is very small compared to the other dimensions.
The main assumption of the plane-stress state is that the first Piola-Kirchhoff stress tensor has the form
\begin{align*}
	P = \left(\begin{array}{ccc}
		P_{11} & P_{12} & 0 \\
		P_{21} & P_{22} & 0 \\
		0 & 0 & 0
	\end{array}\right).
\end{align*}
Then \eqref{eq:piola_lin} implies 
\begin{align*}
	0 &= P_{33} = \lambda (E^{\text{lin}}_{11} + E^{\text{lin}}_{22} + E^{\text{lin}}_{33}) + 2 \mu E^{\text{lin}}_{33}
\end{align*}	
and therefore
\begin{align*}	
	 E^{\text{lin}}_{33} &= - \dfrac{\lambda}{\lambda + 2\mu} (E^{\text{lin}}_{11} + E^{\text{lin}}_{22}).
\end{align*}
This yields 
\begin{align*}
	P_{ii} &= \lambda (E^{\text{lin}}_{11} + E^{\text{lin}}_{22}) - \lambda \dfrac{\lambda}{\lambda + 2\mu} (E^{\text{lin}}_{11} + E^{\text{lin}}_{22}) + 2 \mu E^{\text{lin}}_{ii} \\
	&= \underbrace{\lambda \left(1 - \dfrac{\lambda}{\lambda + 2\mu}\right)}_{=: \lambda_\text{eff}} (E^{\text{lin}}_{11} + E^{\text{lin}}_{22}) + 2 \mu E^{\text{lin}}_{ii}
\end{align*}
for $i = 1,2$.
Hence, by introducing 
\begin{align*}
	\widetilde{P} := \left(\begin{array}{cc}
		P_{11} & P_{12} \\
		P_{21} & P_{22}
	\end{array}\right) \text{ and }
	\widetilde{E}^{\text{lin}} := \left(\begin{array}{cc}
		E^{\text{lin}}_{11} & E^{\text{lin}}_{12} \\
		E^{\text{lin}}_{21} & E^{\text{lin}}_{22}
	\end{array}\right)
\end{align*}
we obtain
\begin{align}
	\widetilde{P} = \lambda_\text{eff} \trace(\widetilde{E}^{\text{lin}}) I + 2\mu \widetilde{E}^{\text{lin}}.
	\label{eq:reduced_stress_equation}
\end{align}

Therefore, the plane-stress formulation of a two dimensional tensile test with greased boundary conditions for linear hyperelastic isotropic materials whose reference configuration is a natural state is given by
\begin{align}
	-\div(\lambda_\text{eff}(x) \trace(\nabla^s u(x)) I + 2\mu(x) \nabla^s u(x)) &= 0 \qquad\qquad  \text{ in } \widetilde{\Omega}, \label{eq:DGL2D}\\
	(\lambda_\text{eff}(x) \trace(\nabla^s u(x)) I + 2\mu(x) \nabla^s u(x))n &= 0 \qquad\qquad \text{ on } \widetilde{\Gamma}_2, \label{eq:BC2D1}\\
	\left[(\lambda_\text{eff}(x) \trace(\nabla^s u(x)) I + 2\mu(x) \nabla^s u(x))n\right]_1 &= 0 \qquad\qquad \text{ on } \widetilde{\Gamma}_1, \label{eq:BC2D2} \\
	u_2 &= u_0[l] \qquad\,\, \text{ on } \widetilde{\Gamma}_1, \label{eq:BC2D3}
\end{align}
where $\widetilde{\Omega}= \{x \in \Omega|x_3=0\}, \widetilde{\Gamma}_i= \{x \in \Gamma_i|x_3=0\}, i=1,2$, $u: \overline{\widetilde{\Omega}} \to \R^{2}$ and $u_0[l]$ is as in \eqref{eq:dirichlet_boundary}.
For notational simplicity, we write $P, E^{\text{lin}},\Omega,\Gamma_i$ and $\lambda$ instead of $\widetilde{P}, \widetilde{E}^{\text{lin}}, \widetilde{\Omega}, \widetilde{\Gamma}_i$ and $\lambda_\text{eff}$
from now on.

Finally, we present the one dimensional reduction of the above model equations. In this case, 
we obtain
\begin{align}
	- \dfrac{d}{dx} \Big((\lambda(x) + 2 \mu(x)) \dfrac{d}{dx} u(x)\Big) &= 0 \qquad\qquad  \text{ in } \Omega:=(0,1), \label{eq:DGL1D}
\\
u &= u_0[l] \qquad\,\, \text{ on } \partial\Omega, \label{BC1D}	
\end{align}
where $u_0[l]$ is as in \eqref{eq:dirichlet_boundary}.
By introducing 
$$\kappa= \lambda+2 \mu,$$ 
the so-called longitudinal modulus, we arrive at
\begin{align}
	- \dfrac{d}{dx} \Big(\kappa(x) \dfrac{d}{dx} u(x)\Big) &= 0 \qquad\qquad  \text{ in } \Omega, 
	\label{eq:dgl_in_terms_of_u} \\
u &= u_0[l] \qquad\,\, \text{ on } \partial\Omega. \label{eq:BC1D}	
\end{align}

\subsection{Existence and uniqueness of solutions}
After deriving the basic equations for linear hyperelastic isotropic solids applied to a tensile test, 
we now present an existence and uniqueness result. Let $d \in \{1,2,3\}$ and $\Omega= (0,1)$
for $d=1$, $\Omega= (0,1)^{2}$ for $d=2$ and $\Omega= (0,1)^{2} \times (0,h),\, h>0$,
for $d=3$.
First, we introduce the function spaces needed to formulate the existence and uniqueness result. These are the Sobolev spaces $H^k(\Omega)$ equipped with the Sobolev norm
\begin{align*}
	\norm[H^k(\Omega)]{u} := \left(\sum_{\snorm{\alpha} \leq k} \int_\Omega \snorm{\partial^\alpha u}^2 dx\right)^\frac{1}{2}
\end{align*}
and the space
\begin{align*}
	H^1_m(\Omega) &:= \left\{u \in H^1(\Omega) : \dfrac{1}{\snorm{\Omega}} \int_\Omega u dx = 0\right\}, 
\end{align*}
which is a closed subspace $H^1(\Omega)$.
Moreover, with the help of these spaces, the following function spaces can be introduced.

\begin{defi}[admissible function spaces]\label{def:admissible_spaces}
	Let $l \in \R$ be given. Then the space $\calW_l$ is defined by
\begin{align*}
		\calW_l := \left\{u \in H^1(\Omega): u \text{ satisfies \eqref{eq:dirichlet_boundary}} \text{ almost everywhere in the trace sense }\right\}
	\end{align*}	
if $d=1$, and	
	\begin{align*}
		\calW_l := \left\{u \in H^1_m(\Omega) \times (H^1(\Omega))^{d-1} : u_2 \text{ satisfies \eqref{eq:dirichlet_boundary}} \text{ almost everywhere in the trace sense }\right\}
	\end{align*}
if $d >1$,	
	equipped with the $H^{1}$-norm. Moreover,
	 its dual space is denoted by $\calW_l'$.
\end{defi}

By using this definition, the notion of a weak solution to the system \eqref{eq:DGL3D} - \eqref{eq:3DBC3} and its one and two dimensional reductions can be formulated. This notion is motivated by the variational argument used for deriving the system of equations.

\begin{defi}[weak solution]\label{def:weak_solution}
	Let $l \in \R$ be given. Then $u \in \calW_l$ is called a weak solution of \eqref{eq:DGL3D} - \eqref{eq:3DBC3} for $d=3$, \eqref{eq:DGL2D} - \eqref{eq:BC2D3}
for $d=2$ and 	\eqref{eq:DGL1D} - \eqref{BC1D} for $d=1$, respectively if
	\begin{align}
		\int_\Omega \lambda \trace(\nabla^s u) \trace(\nabla^s v) + 2\mu \nabla^s u : \nabla^s v dx = 0,
		\label{eq:weak_solution}
	\end{align}
where $A:B= \trace (A^{T}B)$, for all $v \in \calW_0$.
\end{defi}

\textbf{Remark:} The condition that $u_1$ has to have zero mean is introduced as an artificial condition to ensure uniqueness of the solution.
However, this extra condition only excludes additional superimposed rigid body motions in $x_1$-direction.
Therefore, the solution without the mean value condition is still unique up to a superimposed rigid body motion in $x_1$-direction which is physically completely reasonable since no Dirichlet conditions are given for $u_1$ and hence, complete uniqueness of the solution cannot be expected.
\\[-2mm]

Now, the existence and uniqueness theorem can be stated.
The theorem is formulated in a very general way, which is needed to ensure the well-posedness of the equations used in the perturbation theory in section \ref{chap:perturbation_theory}.

\begin{theorem}[existence and uniqueness]\label{thm:existence_uniqueness}
	Let $l \in \R$ be a given prescribed displacement, $\lambda, \mu \in L^\infty(\Omega)$ with
	\begin{align}
		\lambda &\geq 0 \,  \text { almost everywhere in }\, \Omega, \label{lbound}\\
		\mu &\geq \mu^{\ast} > 0 \, \text { almost everywhere in }\, \Omega, \label{mbound}
\end{align}		
	and $F \in \calW_0'$ satisfying
	\begin{align}
		F(u + c e_1) = F(u) \label{eq:invarience_right_side}
	\end{align}
	for all $c \in \R$. Then there exists a unique $u \in \calW_l$ such that
	\begin{align}
	\label{weakeq}
		\int_\Omega \lambda \trace(\nabla^s u) \trace(\nabla^s v) + 2\mu \nabla^s u : \nabla^s v dx = F(v)
	\end{align}	
	holds for all $v \in \calW_0$. Moreover, $u$ satisfies
	\begin{align}
	\label{ellest}
	\|u\|_{H^{1}(\Omega)} \leq C\, (\, \|F\|_{\calW_0'} + (\|\lambda\|_{L^{\infty}(\Omega)}+ 2 \|\mu\|_{L^{\infty}(\Omega)})\,|l|\, )
	\end{align}
	for a constant $C>0$ which depends on $\mu^{\ast}$ but is independent of $F,l,\lambda,\mu$ and $u$.
\end{theorem}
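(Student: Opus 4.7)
The plan is to apply the Lax-Milgram lemma after reducing to a homogeneous problem. Define the symmetric bilinear form
\begin{equation*}
a(u,v) := \int_{\Omega} \lambda \trace(\nabla^{s} u)\trace(\nabla^{s} v) + 2\mu\, \nabla^{s} u : \nabla^{s} v\, dx,
\end{equation*}
and fix an affine lift $u_{\ast} \in \calW_{l}$ by $u_{\ast}(x) := l\, x_{2}\, e_{2}$ for $d\geq 2$ (and simply $u_{\ast}(x) = l x$ for $d=1$); this lies in $\calW_{l}$ and satisfies $\|u_{\ast}\|_{H^{1}} \leq C|l|$. Writing $u = w + u_{\ast}$ with $w \in \calW_{0}$, the problem becomes
\begin{equation*}
a(w,v) = \widetilde{F}(v) := F(v) - a(u_{\ast}, v) \quad \text{for all } v \in \calW_{0},
\end{equation*}
with $\|\widetilde{F}\|_{\calW_{0}'} \leq \|F\|_{\calW_{0}'} + C(\|\lambda\|_{L^{\infty}} + 2\|\mu\|_{L^{\infty}})|l|$, using Cauchy-Schwarz on $a(u_{\ast},\cdot)$.

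\textbf{Coercivity via Korn.} Continuity of $a$ on $\calW_{0} \times \calW_{0}$ is immediate. For coercivity I would discard the nonnegative $\lambda$-term and use only $2\mu \geq 2\mu^{\ast}$ to obtain $a(v,v) \geq 2\mu^{\ast} \|\nabla^{s} v\|_{L^{2}}^{2}$. In the one-dimensional case $v \in H^{1}_{0}(\Omega)$ and Poincaré's inequality closes the argument. For $d\geq 2$ the second Korn inequality gives $\|v\|_{H^{1}} \leq C(\|\nabla^{s} v\|_{L^{2}} + \|v\|_{L^{2}})$, and a standard compactness-contradiction argument upgrades this on $\calW_{0}$ to $\|v\|_{H^{1}} \leq C\|\nabla^{s} v\|_{L^{2}}$, provided the only infinitesimal rigid motion $r(x) = a + Wx$ (with skew-symmetric $W$) lying in $\calW_{0}$ is $r = 0$. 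This rigidity check is the main delicate point: the condition $r_{2}|_{\Gamma_{1}} = 0$ on both the bottom face $\{x_{2}=0\}$ and the top face $\{x_{2}=1\}$ forces the $e_{2}$-translation and every rotation mixing $e_{2}$ with the other axes to vanish, and the zero-mean condition on $r_{1}$ kills the remaining $e_{1}$-translation. In the two-dimensional setting relevant to Section \ref{chap:perturbation_theory} this exhausts all rigid motions, yielding $a(v,v) \geq c\,\|v\|_{H^{1}}^{2}$ with $c = c(\mu^{\ast}) > 0$.

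\textbf{Conclusion and estimate.} Lax-Milgram then produces a unique $w \in \calW_{0}$ solving $a(w,v) = \widetilde{F}(v)$ for all $v \in \calW_{0}$, and $u := w + u_{\ast}$ is the unique solution of \eqref{weakeq} in $\calW_{l}$. Testing with $v = w$ and using coercivity gives
\begin{equation*}
c\,\|w\|_{H^{1}}^{2} \leq a(w,w) = \widetilde{F}(w) \leq \|\widetilde{F}\|_{\calW_{0}'}\,\|w\|_{H^{1}},
\end{equation*}
so that $\|w\|_{H^{1}} \leq C(\|F\|_{\calW_{0}'} + (\|\lambda\|_{L^{\infty}} + 2\|\mu\|_{L^{\infty}})|l|)$, and the triangle inequality together with $\|u_{\ast}\|_{H^{1}} \leq C|l|$ establishes \eqref{ellest}. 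The invariance hypothesis \eqref{eq:invarience_right_side} is not needed for the Lax-Milgram step itself but guarantees that $F$ descends consistently from the natural unconstrained space to $\calW_{0}$, i.e.\ that the gauge fixing $\int u_{1}\,dx = 0$ is compatible with the right-hand side.
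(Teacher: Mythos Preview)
Your proposal is correct and follows essentially the same route the paper sketches: reduce to $l=0$ by subtracting an affine lift $u_\ast\in\calW_l$, then apply Lax--Milgram on $\calW_0$ with coercivity coming from Korn's inequality (the paper writes only two sentences and refers to \cite{Cialet93}, \cite{CD99}). Your rigidity check and the explicit handling of the estimate are more detailed than what the paper provides; your caveat that the rigid-motion elimination is complete only in $d\le 2$ is accurate and matches the paper's actual use of the theorem.
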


The invariance condition \eqref{eq:invarience_right_side} is necessary to reduce the set of test functions to $H^1$-functions with zero mean.
The existence of a unique weak solution to \eqref{eq:DGL3D} - \eqref{eq:3DBC3} for $d=3$, \eqref{eq:DGL2D} - \eqref{eq:BC2D3}
for $d=2$ and 	\eqref{eq:DGL1D} - \eqref{BC1D} for $d=1$, respectively, in the sense of definition \ref{def:weak_solution} is a direct consequence of the above theorem by choosing $F \equiv 0$. 

The proof of Theorem \ref{thm:existence_uniqueness} relies on standard arguments from elliptic theory. For zero Dirichlet boundary conditions, i.e. for $l=0$, the unique existence of a solution $u$ and the validity of estimate \eqref{ellest} follows from Lax-Milgram's theorem. In two and three dimensions, the coerciveness of the bilinear mapping defined by the left-hand side of \eqref{weakeq}
is verified with the help of Korn's inequalities. For $l \neq 0$, the assertions of the theorem
follow from the case with $l=0$ by decomposing $u$ into $u=z+\widetilde{u}$ with $\widetilde{u} \in  \calW_l$. For further details, we refer, for example, to \cite{Cialet93}, \cite{CD99} and the references therein.

\subsection{The tensile force}\label{sec:tensile_force}

An important mechanical property of materials which are measured by a tensile test are their stress-strain curves. For a given force, the displacement $l>0$ at the top end of the material 
has to be measured. Since the displacement is uniquely determined by the force, it is equivalent to prescribe the displacement $l>0$ at the top end and determine the tensile force, i.e. the force needed to deform the material. Hence, for an analytical or a numerical investigation of a tensile test we have to derive a formula for the tensile force from our model equations. 

For sufficiently regular solutions $u$ of our model equations the tensile force $F$ is given by
\begin{align}
	F = F[\lambda, \mu, l] &= \int_{\Gamma_1^\text{top}} (Pn)_2 do,
\end{align} 
where ${\Gamma_1^\text{top}} := \{x \in \Gamma_1 : x_2 = 1\}$.
In the two dimensional case, this formula is equivalent to 
\begin{align}
	F[\lambda, \mu, l]  
   &= \int_{\Gamma_1^\text{top}} \lambda \partial_1 u_1 + (\lambda + 2\mu) \partial_2 u_2 do
\end{align} 
and in the one dimensional case, this formula reduces to 
\begin{align}
	F = F[\kappa,l] = \kappa(1) u'(1)\,.
	\label{eq:force-1d}
\end{align}

However, in the case of a composite material, $\lambda$ and $\mu$ are step functions and the solutions $u$ exist only in the weak sense. Therefore, in two and three dimensions, the above formula  might not be well-posed since we have not enough regularity to define the trace of $\nabla^s u$ on $\partial\Omega$. Nevertheless, in this situation a different expression for the force can be derived, which we will show in the following for the two dimensional case. 
In order to do so, we consider the function space
\begin{align*}
	\mathcal{E}(\Omega) = \left\{v \in L^2(\Omega) : \div v \in L^2(\Omega)\right\}
\end{align*}
Then, the following result holds.

\begin{lem}\label{cor:PinE}
	Let $u \in \calW_l$ be the weak solution of \eqref{eq:DGL2D} - \eqref{eq:BC2D3} and $P$ the corresponding first Piola-Kirchhoff stress tensor. Then we have $P \in \mathcal{E}(\Omega)$.
\end{lem}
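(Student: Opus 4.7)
The plan is to verify the two membership conditions defining $\mathcal{E}(\Omega)$ separately, interpreting the divergence of the tensor $P$ row-wise so that each row of $P$ is a vector field and $\div P$ is $\R^{2}$-valued.

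For the $L^2$-part, since $u \in \calW_l \subset H^1(\Omega)^2$, both $\nabla^s u$ and $\trace(\nabla^s u) = \div u$ lie in $L^2(\Omega)$. Combined with the assumption $\lambda, \mu \in L^\infty(\Omega)$, this gives immediately
\[
P = \lambda \trace(\nabla^s u) I + 2\mu \nabla^s u \in L^2(\Omega)^{2\times 2}.
\]

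For the divergence, I will actually show the much stronger fact that $\div P = 0$ in $\mathcal{D}'(\Omega)^2$, from which $\div P \in L^2(\Omega)^2$ is automatic. Starting from the weak formulation \eqref{weakeq} with $F \equiv 0$, and using the symmetry of $P$ to replace $\nabla^s v$ by $\nabla v$ (since $P : \nabla v = P : \nabla^s v$ whenever $P = P^T$), I obtain
\[
\int_\Omega P : \nabla v \, dx = 0 \qquad \text{for all } v \in \calW_0.
\]
The only mild obstacle is that a generic test function $\phi \in C_c^\infty(\Omega)^2$ does not quite lie in $\calW_0$, because the first component is required to have vanishing mean. I will fix this by subtracting off the offending constant: setting $c := \frac{1}{|\Omega|}\int_\Omega \phi_1 \, dx$, the modified field $\tilde\phi := \phi - c e_1$ enjoys zero mean in its first component and keeps $\tilde\phi_2 = \phi_2$ vanishing in a neighborhood of $\Gamma_1$, so $\tilde\phi \in \calW_0$. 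Because $\nabla(c e_1) = 0$, the weak equation applied to $\tilde\phi$ yields
\[
\int_\Omega P : \nabla \phi \, dx = \int_\Omega P : \nabla \tilde\phi \, dx = 0 \qquad \text{for every } \phi \in C_c^\infty(\Omega)^2,
\]
which is precisely the defining identity $\div P = 0$ in $\mathcal{D}'(\Omega)^2$ (read row-wise). The construction of $\tilde\phi$ is the only step that requires any thought; the rest is a direct reading of the weak formulation.
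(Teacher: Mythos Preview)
Your proof is correct and follows essentially the same route as the paper: show $P\in L^2$ from $u\in H^1$ and $\lambda,\mu\in L^\infty$, then use the weak formulation with compactly supported test functions to conclude $\div P=0$ in the distributional sense. In fact you are slightly more careful than the paper, which simply asserts $C^\infty_0(\Omega)\subset\calW_0$ without addressing the mean-zero constraint on the first component; your subtraction of the constant $c e_1$ cleanly repairs that gap.
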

\begin{proof}
	Because  $u \in \calW_l$ is the weak solution of \eqref{eq:DGL2D} - \eqref{eq:BC2D3}, we have
	\begin{align*}
		\int_\Omega \nabla^s v : P dx = 0
	\end{align*}
	for all $v \in \calW_0$.
	Since $C^\infty_0(\Omega) \subset \calW_0$, this identity also holds for all $v \in C^\infty_0(\Omega)$.
	Hence, the above statement yields that $\div(P) = 0$ in the distributional sense, which implies the assertion of the lemma.
\end{proof}

In this case, according to \cite[theorem 1.2]{temam01}\footnote{In this theorem, it is assumed that the domain to have $C^2$-boundary. However, \cite[remark 1.3]{temam01} yields the extension of this result to domains with Lipschitz boundary.}, there exists a linear, bounded trace operator
\begin{align*}
	T_\mathcal{E} : \mathcal{E}(\Omega) \rightarrow H^{-1/2}(\partial\Omega)
\end{align*}
with
\begin{align*}
	T_\mathcal{E}[v] = vn\vert_{\partial\Omega}
\end{align*}
for all $v \in C^\infty_0(\overline{\Omega})$.
Here, $H^{-1/2}(\Omega)$ is the dual space of $H^{1/2}(\Omega) := \calR(T_{H^1})$, where $T_{H^1}$ is the trace operator on $H^1(\Omega)$ and $\calR$ denotes the range of an operator. 
Furthermore, it is proven in \cite[theorem 1.2]{temam01} that a generalized Stokes formula holds, which is given by
\begin{align}
	\scalarprod{v}{\nabla w}_{L^2(\Omega)} + \scalarprod{\div v}{w}_{L^2(\Omega)} = \scalarprod{T_\mathcal{E}[v]}{T_{H^1}[w]}_{H^{-1/2}(\Omega)}
	\label{eq:generalized_stokes}
\end{align}
for all $v \in E(\Omega)$ and $w \in H^1(\Omega)$. Here, the mapping
\begin{align*}
	\scalarprod{\cdot}{\cdot}_{H^{-1/2}(\Omega)} : H^{-1/2}(\Omega) \times H^{1/2}(\Omega) \rightarrow \R
\end{align*}
is called the duality mapping.
Using this trace operator and the generalized Stokes formula, the following theorem can be proven.

\begin{theorem}[tensile force]\label{thm:tensile_force}
	Let $l \in \R$, $\lambda, \mu \in L^\infty(\Omega)$ satisfy the assumptions from theorem \ref{thm:existence_uniqueness}.
	Then the tensile force at the top end of the material is given by
	\begin{align}
		F[\lambda,\mu,l] = \int_\Omega P_{22} dx,
		\label{eq:tensile_force}
	\end{align}
	where $P$ is the first Piola-Kirchhoff stress tensor.
\end{theorem}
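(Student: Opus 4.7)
The plan is to reduce the classical surface integral defining the tensile force to the volume integral $\int_\Omega P_{22}\,dx$ by means of the generalized Stokes formula \eqref{eq:generalized_stokes}, exploiting the regularity $P \in \mathcal{E}(\Omega)$ secured by Lemma \ref{cor:PinE}.

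First I would isolate the second row of $P$: set $q := (P_{21}, P_{22})$. Repeating the argument of Lemma \ref{cor:PinE} row-wise---testing \eqref{eq:weak_solution} against $v = (0,\varphi)$ with $\varphi \in C_0^\infty(\Omega)$---shows $q \in \mathcal{E}(\Omega)$ with $\div q = 0$ in the distributional sense. Then I would apply \eqref{eq:generalized_stokes} to $q$ and the smooth auxiliary function $w(x) := x_2 \in H^1(\Omega)$. Since $\nabla w = e_2$ and $\div q = 0$, the formula collapses to
\begin{align*}
\int_\Omega P_{22}\,dx \;=\; \scalarprod{T_{\mathcal{E}}[q]}{T_{H^1}[x_2]}_{H^{-1/2}(\partial\Omega)}.
\end{align*}

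Next I would identify this duality pairing with the tensile force. Testing \eqref{eq:weak_solution} with $v = (0,\varphi)$ for arbitrary $\varphi \in H^1(\Omega)$ vanishing on $\Gamma_1$ gives $\scalarprod{T_{\mathcal{E}}[q]}{T_{H^1}[\varphi]}_{H^{-1/2}} = 0$, so $T_{\mathcal{E}}[q]$ annihilates every trace that vanishes on $\Gamma_1$---this is precisely the weak form of the Neumann condition $(Pn)_2 = 0$ on $\Gamma_2$. Consequently, only $T_{H^1}[x_2]|_{\Gamma_1}$ enters the pairing, and $x_2|_{\Gamma_1}$ equals $1$ on $\Gamma_1^{\text{top}}$ and $0$ on $\Gamma_1^{\text{bot}}$. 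In the smooth regime the pairing reduces to the classical $\int_{\Gamma_1^{\text{top}}} (Pn)_2\,do$, so the right-hand side is indeed the tensile force as defined at the beginning of section \ref{sec:tensile_force}.

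The main obstacle is the very last identification in the non-smooth regime: when $\lambda, \mu$ are merely $L^\infty$, the trace of $\nabla^s u$ on $\partial\Omega$ does not exist pointwise, so the classical surface integral is not literally available. One therefore has to either take the duality pairing (equivalently, the volume integral $\int_\Omega P_{22}\,dx$) as the natural weak definition of $F$, or justify the identification by approximating $\lambda, \mu$ by smooth parameters and passing to the limit, using the uniform $H^1$-control of the corresponding solutions supplied by \eqref{ellest}. A minor subtlety is that $\Omega$ has corners where $\Gamma_1$ and $\Gamma_2$ meet, but since $w(x) = x_2$ is globally smooth on $\overline{\Omega}$ and the generalized trace operator $T_{\mathcal{E}}$ is already available on Lipschitz domains (see the footnote after Lemma \ref{cor:PinE}), this causes no real difficulty.
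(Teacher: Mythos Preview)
Your proposal is correct and follows essentially the same route as the paper: pick the auxiliary function $x_2$, invoke $\div P=0$ from Lemma~\ref{cor:PinE}, and apply the generalized Stokes formula \eqref{eq:generalized_stokes} to collapse the boundary pairing to $\int_\Omega P_{22}\,dx$. The only cosmetic difference is that the paper uses the vector test function $w=(0,x_2)^T$ and the full tensor $P$ rather than isolating the second row $q=(P_{21},P_{22})$; your closing discussion of why, in the merely $L^\infty$ setting, the classical surface integral has to be read as the duality pairing (or recovered by smooth approximation via \eqref{ellest}) is actually more careful than the paper's own chain of equalities, which treats that step formally.
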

\begin{proof}
	Define $w = (0,x_2)^T$, which is a $H^1$-function.
	Furthermore, with lemma \ref{cor:PinE} and \eqref{eq:BC2D1} we get
	\begin{align*}
		F[\lambda, \mu, l] &= \int_{\Gamma_1^\text{top}} (Pn)_2 do = \int_{\partial\Omega} (Pn) \cdot w do = \scalarprod{T_\mathcal{E}[P]}{T_{H^1}[w]}_{H^{-1/2}(\Omega)} \\
		&\stackrel{\eqref{eq:generalized_stokes}}{=} \int_{\Omega} \underbrace{\div(P)}_{=0} \cdot w dx + \int_\Omega \nabla w : P dx = \int_\Omega \nabla w : P dx = \int_\Omega P_{22} dx,
	\end{align*}
	which is the assertion.
\end{proof}

\section{The embedded cell method}\label{chap:ecm}
The embedded cell method is a numerical scheme to compute the stress-strain curves of composite materials by performing a numerical homogenization process. In this paper, we will consider composite materials consisting, for instance, of a metal matrix material with embedded ceramic particles. The general idea of homogenization is to derive effective properties of a multiscale problem by replacing the problem by an appropriate homogeneous problem.
The multiscale properties of metal-ceramic composite materials are a consequence of the different values of the material parameters $\lambda$ and $\mu$ of the embedded material and the surrounding material.
As discussed for example in \cite{CD99}, the mathematical analysis of effective properties of multiscale materials is performed by considering sequences of faster and faster oscillating material parameters.

Let, for example, $\lambda, \mu : \R^d \rightarrow \R$, where $d \in \{1,2,3\}$, be periodic functions and define for $\delta > 0$:
\begin{align*}
	\lambda^\delta(x) := \lambda\left(\dfrac{x}{\delta}\right), \qquad\quad
	\mu^\delta(x) := \mu\left(\dfrac{x}{\delta}\right).
\end{align*}
Then, the homogenization problem is given by the family of solutions $(u^\delta)_{\delta > 0} \subset \calW_l$ to 
\begin{align}
	\int_\Omega \lambda^\delta \trace(\nabla^s v) \trace(\nabla^s u^\delta) + 2 \mu^\delta \nabla^s v : \nabla^s u^\delta dx = 0
\label{anahom}
\end{align}
for all $v \in \calW_0$.
Notice that this family of solutions is well-posed according to theorem \ref{thm:existence_uniqueness}.

The aim of analytical homogenization theory is to examine the asymptotic behavior of the solutions $(u^\delta)_{\delta > 0}$ for $\delta \rightarrow 0$.
In particular, it is of great interest to study if a limit exists, which equation the possible limit solves and which are the homogeneous coefficients of this effective equation, the so-called effective material parameters. The idea behind this is that the effective behavior of a multiscale material should be given by such a limit and by the effective material parameters of its equation.
An essential result of analytical homogenization theory is the following theorem.

\begin{theorem}[homogenization]\label{thm:homogenization_convergence_result}
	Let $d \in \{1,2,3\}$, $l \in \R$, $(\lambda^{\delta})_{\delta > 0}, (\mu^{\delta})_{\delta > 0} \subset L^\infty(\Omega)$ be fa\-milies of material parameters defined as above and $(u^{\delta})_{\delta > 0} \subset \calW_l$ be the correspon\-ding fa\-mily of solutions to \eqref{anahom}.
	Then there exists a unique $u^{\text{hom}} \in \calW_l$ and constants $\lambda^{\text{hom}}, \mu^{\text{hom}}$ $\in \R$, called effective material parameters, such that 
	\begin{align} \label{3eff1}
		\begin{cases}
			u^{\delta} \rightharpoonup u^{\text{hom}} & \text{ in } \calW_l \\
			\lambda^{\delta} \trace(\nabla^s u^{\delta}) I + 2 \mu^{\delta} \nabla^s u^{\delta} \rightharpoonup \lambda^{\text{hom}} \trace(\nabla^s u^{\text{hom}}) I + 2\mu^
			{\text{hom}} \nabla^s u^{\text{hom}} & \text{ in } L^2(\Omega, \R^{d\times d})
		\end{cases}
	\end{align}
for $\delta \rightarrow 0$. Moreover, we have
	\begin{align} \label{3eff2}
		\int_\Omega \lambda^{\text{hom}} \trace(\nabla^s u^{\text{hom}}) \trace(\nabla^s v) + 2\mu^{\text{hom}} \nabla^s u^{\text{hom}} : \nabla^s v dx = 0
	\end{align}
	for all $v \in \calW_0$ and consequently
\begin{align} \label{3eff3}
\lim_{\delta \to 0} F[\lambda^{\delta},\mu^{\delta},l] &= F[\lambda^{\text{hom}},\mu^{\text{hom}},l] =: F^{\text{hom}}.
\end{align}	
\end{theorem}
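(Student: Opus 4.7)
The plan is to combine the uniform a priori bounds provided by Theorem \ref{thm:existence_uniqueness} with the classical oscillating-test-function method of Tartar and Murat (equivalently, two-scale convergence) to identify the homogenized limit.

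\textbf{Step 1 (compactness and weak limits).} Since $\lambda,\mu$ are fixed periodic $L^\infty$ functions, the rescaled families $\lambda^\delta,\mu^\delta$ are uniformly bounded in $L^\infty(\Omega)$ and satisfy \eqref{lbound}--\eqref{mbound} uniformly in $\delta$. Applying \eqref{ellest} with $F\equiv 0$ to the weak solutions of \eqref{anahom} yields $\|u^\delta\|_{H^1(\Omega)}\le C|l|$ with $C$ independent of $\delta$. By Banach--Alaoglu, a subsequence satisfies $u^\delta\rightharpoonup u^{\ast}$ in $\calW_l$. Similarly, the stress $P^\delta:=\lambda^\delta\trace(\nabla^s u^\delta)I+2\mu^\delta\nabla^s u^\delta$ is bounded in $L^2(\Omega,\R^{d\times d})$, hence along a further subsequence $P^\delta\rightharpoonup P^{\ast}$ weakly in $L^2$. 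Passing to the limit in \eqref{anahom} gives $\int_\Omega \nabla^s v : P^{\ast}\,dx=0$ for every $v\in\calW_0$, which is the weak form of a constant-coefficient equation once $P^{\ast}$ is expressed linearly in $\nabla^s u^{\ast}$.

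\textbf{Step 2 (identification of the effective constitutive law).} The decisive step is to show that $P^{\ast}=\lambda^{\text{hom}}\trace(\nabla^s u^{\ast})I+2\mu^{\text{hom}}\nabla^s u^{\ast}$ almost everywhere in $\Omega$. Following Bensoussan--Lions--Papanicolaou, for each constant symmetric matrix $E\in\R^{d\times d}$ one introduces the corrector $\chi_E\in H^1_{\text{per}}(Y,\R^d)/\R^d$ on the unit cell $Y$ as the unique weak solution of
\begin{equation*}
-\div_y\bigl(\lambda(y)\trace(E+\nabla^s_y\chi_E)I+2\mu(y)(E+\nabla^s_y\chi_E)\bigr)=0\quad\text{in } Y,
\end{equation*}
which is well-posed by Lax--Milgram together with Korn's inequality for periodic functions. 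The effective fourth-order tensor is then defined by
\begin{equation*}
C^{\text{hom}}E:=\frac{1}{|Y|}\int_Y\bigl[\lambda(y)\trace(E+\nabla^s_y\chi_E)I+2\mu(y)(E+\nabla^s_y\chi_E)\bigr]dy .
\end{equation*}
Using the oscillating test function $\varphi^\delta_E(x):=Ex+\delta\chi_E(x/\delta)$, whose symmetric gradient equals $E+(\nabla^s_y\chi_E)(x/\delta)$ and converges two-scale to $E+\nabla^s_y\chi_E$, a div-curl argument applied to the product $P^\delta:\nabla^s\varphi^\delta_E$ (noting that $P^\delta$ is asymptotically divergence-free by \eqref{anahom} and that symmetric gradients obey the required compensated-compactness structure) identifies $P^{\ast}=C^{\text{hom}}\nabla^s u^{\ast}$. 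Under the isotropy of the microgeometry implicit in the present setting, $C^{\text{hom}}$ takes the isotropic form parametrised by two scalars $\lambda^{\text{hom}},\mu^{\text{hom}}\in\R$, giving \eqref{3eff1} and \eqref{3eff2}.

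\textbf{Step 3 (uniqueness and force convergence).} Applying Theorem \ref{thm:existence_uniqueness} to the constant-coefficient problem \eqref{3eff2} yields uniqueness of $u^{\text{hom}}\in\calW_l$, so the subsequential weak limit $u^{\ast}$ is independent of the extracted subsequence and the whole sequence $u^\delta$ converges. For \eqref{3eff3}, Theorem \ref{thm:tensile_force} represents the tensile force as $F[\lambda,\mu,l]=\int_\Omega P_{22}\,dx$; testing the weak $L^2$ convergence $P^\delta\rightharpoonup P^{\text{hom}}$ against the constant function $\mathbf{1}_\Omega$ in the $(2,2)$-entry delivers $F[\lambda^\delta,\mu^\delta,l]\to F^{\text{hom}}$. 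I expect Step 2 to be the main obstacle: weak $L^2$ convergence does not commute with the multiplications by $\lambda^\delta$ and $\mu^\delta$, so identifying $P^{\ast}$ requires the compensated-compactness structure encoded by the cell problem and the div-curl lemma (or, equivalently, a two-scale convergence argument), with the periodic structure and the uniform coercivity \eqref{mbound} being essential for solvability of the cell problem and for the limiting procedure.
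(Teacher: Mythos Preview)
The paper does not give a self-contained argument for this theorem: its entire proof is a citation of \cite[theorem 10.11]{CD99} together with the remark that the adaptation to the present boundary conditions is routine. Your sketch is precisely the standard compactness plus oscillating-test-function (Tartar--Murat / div--curl) argument that underlies the cited result, so in substance you are reproducing the proof the paper merely invokes. In that sense your approach and the paper's coincide, with the difference that you actually spell out the mechanism.

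One point worth flagging: in Step~2 you pass from the general effective fourth-order tensor $C^{\text{hom}}$ to an isotropic form described by two scalars $\lambda^{\text{hom}},\mu^{\text{hom}}$ by appealing to ``isotropy of the microgeometry implicit in the present setting.'' This is not automatic in periodic elasticity homogenization---pointwise isotropy of the constituents does not in general force isotropy of the effective tensor without additional symmetry of the unit cell. The paper's theorem statement already presupposes this form and defers the matter to the reference, so this is not a defect of your argument relative to the paper; but if you want a self-contained proof you should either add a symmetry hypothesis on the periodic pattern or weaken the conclusion to a general constant effective tensor. Step~3, using Theorem~\ref{thm:tensile_force} to convert weak $L^2$ convergence of the stress into convergence of the force, is exactly right.
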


\begin{proof}
	The result is proven in \cite[theorem 10.11]{CD99} for different boundary conditions.
	However, the proof in the case of our boundary conditions can be done analogously.
\end{proof}

However, in general, it is difficult to compute $\lambda^{\text{hom}}, \mu^{\text{hom}}$ and  $u^{\text{hom}}$, and to appro\-ximate $u^{\text{hom}}$ by $u^{\delta}$ with small $\delta$ would be hard to handle numerically because of the very fast oscillating coefficients. Therefore, instead of solving the above analytical homogenization problem numerically, the basic idea of the embedded cell method is to replace the complex geometry of this problem by a so-called embedded cell, which is embedded into a dummy material, see figure \ref{fig:emc}.

\begin{figure}
	\centering
	\includegraphics[width = 0.58\textwidth]{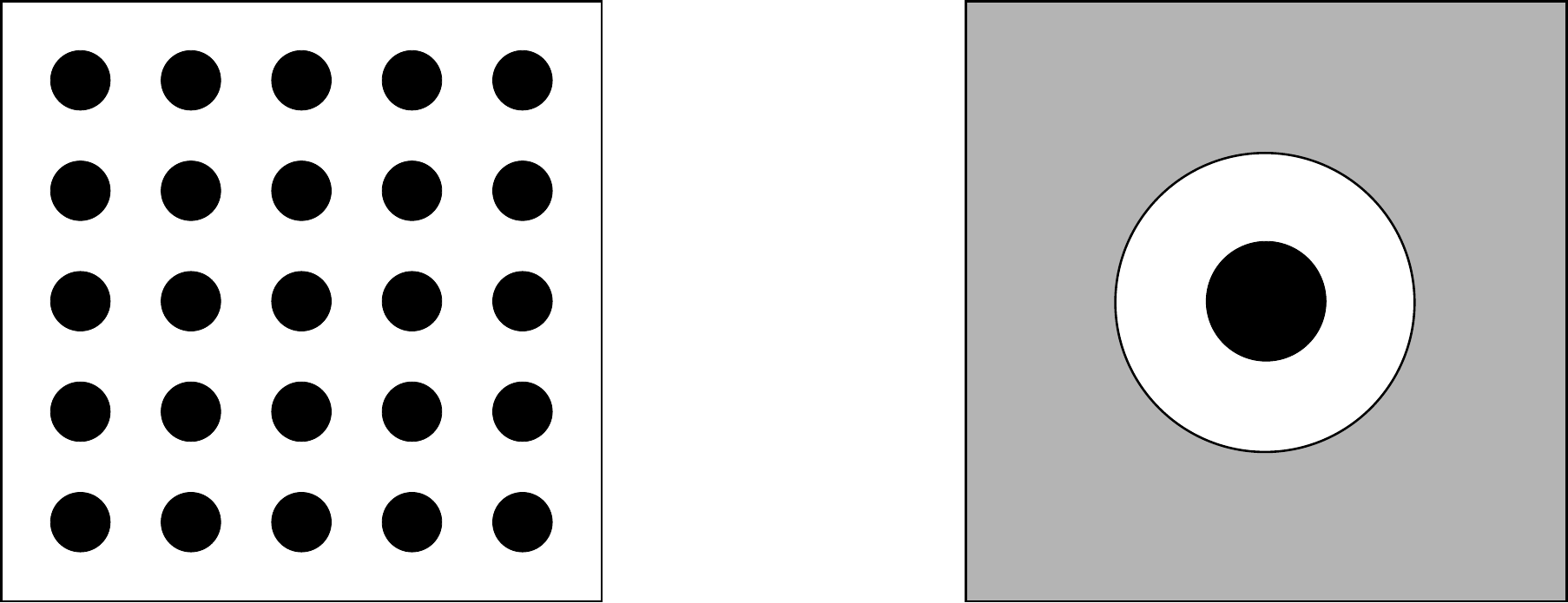}
	\caption[Full material and embedded cell]{Left: Composite with periodically distributed ceramic particles (black) embedded into a metal matrix (white). Right: Embedded cell in a dummy material (grey)}
	\label{fig:emc}
\end{figure}

The material parameter of this dummy material is iteratively determined by the following scheme. In the one dimensional case, this scheme is given by algorithm \ref{alg:emc_1d}.
\vspace{3mm}

\begin{algorithm}[H]\label{alg:emc_1d}
	\KwData{material parameters $\kappam, \kappac$, deformation length $l$}
	\KwResult{dummy material parameter $\kdummy$}
	Make initial guess for $\kdummy(0)$\;
	\For{$n = 1,2,\dots$}{
		Find equivalent material parameter $\kappa^{\text{equiv}}(n)$ for the material with dummy material parameter $\kdummy(n-1)$\;
		Set $\kdummy(n) = \kappa^{\text{equiv}}(n)$\;
	}
\caption{embedded cell algorithm for one dimensional linear hyperelastic isotropic materials} 
\end{algorithm}	
\vspace{3mm}

It is clear that the crucial step in algorithm \ref{alg:emc_1d} is to determine the equivalent longitudinal modulus $\kappa^{\text{equiv}}$ which is defined the following way.

\begin{defi}[equivalent longitudinal modulus]\label{def:equivalent_material_param_1d}
	Consider a one dimensional linear hyperelastic isotropic material with longitudinal modulus $\kappa = \kappa(x)$ and let $F[\kappa,l]$ be the corresponding tensile force. 
	Then, $\kappa^\text{equiv} \in \R^+$ is called equivalent longitudinal modulus if
	\begin{align}
	F[\kappa^\text{equiv},l] = F[\kappa,l].
	\end{align}
\end{defi}

Since the deformation of a homogeneous one dimensional linear hyperelastic isotro\-pic material with deformation length $l>0$ is given by $u(x)=lx$, we obtain by using \eqref{eq:force-1d} that
\begin{align}
	\kappa^\text{equiv} = F[\kappa,l]l^{-1}.
	\end{align} 

To discuss the two dimensional case we restrict ourselves for simplicity to the situation of a composite material with a constant shear modulus. Then, the scheme is given by algorithm \ref{alg:emc}.
\vspace{0.3cm}
	
\begin{algorithm}[H]\label{alg:emc}	
	\KwData{material parameters $\lambdam, \lambdac$ and $\mu$, deformation length $l$}
	\KwResult{dummy material parameter $\ldummy$}
	Make initial guess for $\ldummy(0)$\;
	\For{$n = 1,2,\dots$}{
		Find equivalent material parameter $\lambda^\text{equiv}(n)$ for the material with dummy material parameter $\ldummy(n-1)$\;
		Set $\ldummy(n) = \lambda^\text{equiv}(n)$\;
	}
	\caption{embedded cell algorithm for two dimensional linear hyperelastic isotropic materials with constant shear modulus} 
\end{algorithm}
\vspace{0.3cm}

In algorithm \ref{alg:emc}, the crucial step is to determine the equivalent first Lamé parameter $\lambda^\text{equiv}$, which is defined the following way.

\begin{defi}[equivalent material parameter]\label{def:equivalent_material_param}
	Consider a linear hyperelastic isotropic material with constant shear modulus $\mu$ and first Lamé parameter $\lambda = \lambda(x)$ and let $F[\lambda,\mu,l]$ be the corresponding tensile force.
	Then, $\lambda^\text{equiv} \in \R^+$ is called equivalent first Lamé parameter if
	\begin{align*}
		F[\lambda^\text{equiv},\mu,l] = F[\lambda, \mu, l]. 
	\end{align*}
\end{defi}

The existence of such an equivalent first Lamé parameter for small perturbations, the formula
\begin{align}
	\lambda^\text{equiv} = \left(F[\lambda, \mu, l]- 2\mu l\right) \left(2l - \dfrac{F[\lambda, \mu, l]}{2\mu}\right)^{-1}
	\end{align} 
as well as the fact that $\lambda^\text{equiv}$ is independent of the choice of $l$ is proven in \ref{chap:appendix_well-posedness}.

Next, the geometry of the embedded cell should be specified in more detail.
Therefore, let $\Omega_m, \Omega_c \subset \Omega$ be the areas of the metal and the ceramic, respectively, and $\Omega_c \cup \Omega_m = \Omega$.
Then a decomposition of $\Omega$ is introduced by $\Omega = \widetilde{\Omega}_m \cup \widetilde{\Omega}_c \cup \widetilde{\Omega}_\text{dummy}$ where $\widetilde{\Omega}_m, \widetilde{\Omega}_c, \widetilde{\Omega}_\text{dummy}$ are pairwise disjoint subdomains of $\Omega$.
According to \cite{Schmauder96}, this decomposition is defined in the following way:
\begin{align*}
	\widetilde{\Omega}_c &:= \{x \in \Omega : x \in B_{r_1}(\bar{x})\}, \\
	\widetilde{\Omega}_m &:= \{x \in \Omega : x \in B_{r_2}(\bar{x}) \setminus B_{r_1}(\bar{x})\}, \\
	\widetilde{\Omega}_\text{dummy} &:= \Omega \setminus (\widetilde{\Omega}_m \cup \widetilde{\Omega}_c)
\end{align*}
with $\bar{x} = (0.5,0.5)^T$, $B_r(\bar{x}):= \{ x \in \R^{d}|\,|x-\bar{x}| \leq r\}$ and $0 < r_1 \leq r_2 \leq 0.1$ chosen such that
\begin{align}
	\dfrac{\snorm{\Omega_c}}{\snorm{\Omega_m}} = \dfrac{\snorm{\widetilde{\Omega}_c}}{\snorm{\widetilde{\Omega}_m}},
	\label{eq:volume_fraction_ecm}
\end{align}
where $|G|$ denotes the ($d$-dimensional) volume of $G \subset \R^{d}$.

With this specifications the embedded cell method can be formulated in all details. In the one dimensional case, it is given by algorithm \ref{alg:emc_detail_1d}.
\vspace{3mm}

\begin{algorithm}[H]\label{alg:emc_detail_1d}	
\KwData{material parameters $\kappam, \kappac$, deformation length $l$}
	\KwResult{dummy material parameter $\kdummy$, tensile force $F^\text{ECM}$}
	Make initial guess for $\kdummy(0)$ by defining
	\\[-5mm]
	\begin{align} \label{kd0}
		\kdummy(0) := \snorm{\Omega_c} \kappac + \snorm{\Omega_m} \kappam\text{\;}
	\end{align}
	\\[-3mm]
	\For{$n = 1,2,\dots$}{
		Compute the displacement $u^{n-1} \in \calW_l$ by solving
		\begin{align}
			\int_\Omega \kappa^{n-1} (u^{n-1})' v' dx = 0
			\label{eq:weak_form_ECM_1d}
		\end{align}
		for all $v \in \calW_0$ with
		\begin{align} \label{kappa_ECM}
			\kappa^{n-1} = \chi_{\widetilde{\Omega}_c} \kappac + \chi_{\widetilde{\Omega}_m} \kappam + \chi_{\widetilde{\Omega}_\text{dummy}} \kdummy(n-1)\text{\;}
		\end{align}
		Compute the tensile force $F^n:=F[\kappa^{n-1},l]$ from $u^{n-1}$\;
		Regain the equivalent material parameter from $F^n$ by using
\\[-6mm]		
		\begin{align} \label{kequn}
			\kappa^\text{equiv}(n) = F^n l^{-1}\text{\;}
		\end{align}
\\[-4mm]		
		Set 
\begin{align}		\label{kdn}
		\kdummy(n) := \kappa^\text{equiv}(n)\text{\;}
		\end{align}
\\[-4mm]	
	}
Define
\begin{align}
	\kdummy &:= \lim_{n \rightarrow \infty} \kdummy(n) \label{eq:lim_parameter_1d}\text{\;}\\
	F^\text{ECM} &:= \lim_{n \rightarrow \infty} F^n\text{\;} \label{eq:lim_force_1d}	
\end{align}
\caption{embedded cell algorithm for one dimensional linear hyperelastic isotropic materials (detailed)} 
\end{algorithm}
\vspace{3mm}

In the two dimensional case with constant shear modulus, the embedded cell method is given by algorithm \ref{alg:emc_detail}.
\vspace{3mm}

\begin{algorithm}[H]\label{alg:emc_detail}
\KwData{material parameters $\lambdam, \lambdac$ and $\mu$, deformation length $l$}
	\KwResult{dummy material parameter $\ldummy$, tensile force $F^\text{ECM}$}
	Make initial guess for $\ldummy(0)$ by defining
	\\[-5mm]
	\begin{align} \label{ld0}
		\ldummy(0) := \snorm{\Omega_c} \lambdac + \snorm{\Omega_m} \lambdam\text{\;}
	\end{align}
	\\[-3mm]
	\For{$n = 1,2,\dots$}{
		Compute the displacement $u^{n-1} \in \calW_l$ by solving
		\begin{align}
			\int_\Omega \lambda^{n-1} \trace(\nabla^s u^{n-1}) \trace(\nabla^s v) + 2\mu \nabla^s u^{n-1} : \nabla^s v dx = 0
			\label{eq:weak_form_ECM}
		\end{align}
		for all $v \in \calW_0$ with
		\begin{align} \label{lambda_ECM}
			\lambda^{n-1} = \chi_{\widetilde{\Omega}_c} \lambdac + \chi_{\widetilde{\Omega}_m} \lambdam + \chi_{\widetilde{\Omega}_\text{dummy}} \ldummy(n-1)\text{\;}
		\end{align}
		Compute the tensile force $F^n:=F[\lambda^{n-1},\mu,l]$ from $u^{n-1}$\;
		Regain the equivalent material parameter from $F^n$ by using
\\[-6mm]		
		\begin{align} \label{lequn}
			\lambda^\text{equiv}(n) = \left(F^n- 2\mu l\right) \left(2l - \dfrac{F^n}{2\mu}\right)^{-1}\text{\;}
		\end{align}
\\[-4mm]		
		Set 
\begin{align}		\label{ldn}
		\ldummy(n) := \lambda^\text{equiv}(n)\text{\;}
		\end{align}
\\[-4mm]	
	}
Define
\begin{align}
	\ldummy &:= \lim_{n \rightarrow \infty} \ldummy(n)\text{\;} \label{eq:lim_parameter}\\
	F^\text{ecm} &:= \lim_{n \rightarrow \infty} F^n \text{\;} \label{eq:lim_force}
\end{align}	
\caption{embedded cell algorithm for two dimensional linear hyperelastic isotropic materials with constant shear modulus (detailed)} 
\end{algorithm}
\vspace{3mm}

In numerical experiments, the limits $\kdummy$ and  $\ldummy$, respectively, and  $F^\text{ECM}$ have to be replaced by $\kdummy(N)$ and  $\ldummy(N)$, respectively, and  $F^N$ for a sufficiently large $N$ which has to be determined by an appropriate stop
criterion.

Now, let $\lambda^{\text{hom}}$ and $\kappa^{\text{hom}} =  \lambda^{\text{hom}} + 2 \mu^{\text{hom}}$, respectively, $u^{\text{hom}}$ as well as $F^{\text{hom}}$ be the result of the above analytical homogenization procedure. Then
the question of the correctness of the embedded cell method means: \\
\begin{enumerate}
	\item Do the limits \eqref{eq:lim_parameter_1d}, \eqref{eq:lim_force_1d}  and \eqref{eq:lim_parameter}, \eqref{eq:lim_force}, respectively exist?
	\item Do the identities
	\begin{align}
     \kdummy =  \kappa^{\text{hom}} \label{kcorrect}
     \end{align}
     and
      \begin{align}
     \ldummy =  \lambda^{\text{hom}}, \label{lcorrect}
     \end{align}	
	 respectively, and
	\begin{align}
		F^\text{ECM} =  F^{\text{hom}}
		\label{eq:force_equality}
	\end{align}
	hold?
	\end{enumerate}

\section{The one dimensional case}
\label{chap:correctness_1D}
It is the goal of this section to prove the correctness of the embedded cell method 
in one dimension. In this case, the proof can be performed by explicit calculations.
Let $ \Omega = (0,1)$, $l >0$ and $ \kappa $ be some step function.
Since we have no continuity of $ \kappa $ in $ \Omega $,  we have to consider weak 
solutions $u \in \calW_l$ to 
$$ 
\int_0^1 \kappa u' v' dx = 0 
$$ 
for all $ v \in \calW_0$.
Then the tensile force is given by 
$$ 
F= F[\kappa,l] = \kappa(1)  u'(1)
$$ 
in the trace sense.
We start with some periodic order of the materials and therefore consider $\kappa$ with
\begin{align}
{\kappa}(x) = \left\{ \begin{array}{cl}\kappam, & x \in [q,q+|\Omega_m|) ,\\
\kappac, & x \in [q+|\Omega_m|,q+1) \end{array} \right.
\end{align}
and  $q \in \mathbb{Z} $. Then we define $ \kappa_n(x) = {\kappa}(2^n x) $. 

For $ l = 0 $ the above boundary problem with $\kappa= \kappa_n$ is solved by $ u = u_n = 0 $.
For $ l > 0 $  and  $\kappa= \kappa_n$ the solution $u=u_n$ is determined by 
$$ 
(u_n)'(x) = \left\{ \begin{array}{cl} \alpha/\kappam  & \textrm{if } \kappa_n(x) = \kappam ,\\
\alpha/\kappac  & \textrm{if } \kappa_n(x) = \kappac \end{array} \right.
$$
since 
$$
\int_0^1 \kappa_n u' v' dx = \int_0^1 \alpha v' dx = \alpha  \int_0^1 v' dx = 0 
$$
for all $v \in \calW_0 $.
Hence we have independently of $ n $ that
$$ 
l = u_n(1)
= \alpha \left(\frac{|\Omega_m|}{\kappam}+ \frac{|\Omega_c|}{\kappac}\right)
$$ 
and so 
$$ 
\alpha
= l \left(\frac{|\Omega_m|}{\kappam}+ \frac{|\Omega_c|}{\kappac}\right)^{-1}.
$$ 
The corresponding tensile force is then given by
$$ 
F_n = \kappa_n(1) (u_n)'(1) = \left\{ \begin{array}{rl} \kappam \cdot \frac{\alpha}{\kappam} & \text{if} \,\, \kappa_n(1) = \kappam \\
\kappac \cdot \frac{\alpha}{\kappac} & \text{if} \,\, \kappa_n(1) = \kappac
\end{array}
\right\}
= l \left(\frac{|\Omega_m|}{\kappam}+ \frac{|\Omega_c|}{\kappac}\right)^{-1},
$$ 
and therefore also independent of $n$. 

Consequently, we obtain
\begin{align}
F^{\text{hom}} = l \left(\frac{|\Omega_m|}{\kappam}+ \frac{|\Omega_c|}{\kappac}\right)^{-1}
\end{align}
and
\begin{align}
\kappa^{\text{hom}} = \left(\frac{|\Omega_m|}{\kappam}+ \frac{|\Omega_c|}{\kappac}\right)^{-1}.
\end{align}

Now, we consider the embedded cell method.
Let
$$ 
{\kappa}^n(x) = \left\{ \begin{array}{cl}
\kappac, & x \in [1/2- |\Omega_c|/10,1/2+ |\Omega_c|/10) ,\\
\kappam, & x \in [2/5,1/2- |\Omega_c|/10) \cup [1/2+ |\Omega_c|/10,3/5)  ,\\
\kdummy(n), & x \in [0,2/5) \cup [3/5,1]. \end{array} \right.
$$
As above we find that the solution $u=u^n$ of the above boundary problem with $l>0$ and $\kappa=\kappa^n$ is determined by
$$ 
(u^n)'(x) = \left\{ \begin{array}{cl} \alpha/\kappac  & \textrm{if } \kappa^n(x) = \kappac, \\
 \alpha/\kappam  & \textrm{if } \kappa^n(x) = \kappam ,\\
\alpha/\kdummy(n)  & \textrm{if } \kappa^n(x) = \kdummy(n), \end{array} \right.
$$
and 
$$ 
l= u^n(1) = \frac{|\Omega_c|}{5} \frac{\alpha}{\kappac}+ \frac{|\Omega_m|}{5} \frac{\alpha}{\kappam}+ \frac{4}{5} \frac{\alpha}{\kdummy(n)}.
$$
Hence we have
$$ 
F^{n+1} = F[\kappa^n,l] = \kdummy(n) \frac{\alpha}{\kdummy(n)} = l \left(\frac{|\Omega_c|}{5 \kappac} +\frac{|\Omega_m|}{5 \kappam}+\frac{4 }{5 \kdummy(n)}\right)^{-1},
$$
and by \eqref{kequn} and \eqref{kdn} we get
\begin{align}
\label{kdit}
\kdummy(n+1) = \left(\frac{|\Omega_c|}{5 \kappac} + \frac{|\Omega_m|}{5 \kappam}+\frac{4 }{5 \kdummy(n)}\right)^{-1}.
\end{align}
Since the right hand side of \eqref{kdit} defines a contraction, there is a unique fixed point 
$ \kdummy  = \lim_{n \to \infty} \kdummy(n) $ satisfying 
$$ 
\kdummy = \left(\frac{|\Omega_c|}{5 \kappac} + \frac{|\Omega_m|}{5 \kappam}+\frac{4 }{5 \kdummy}\right)^{-1}
$$
and so 
$$ 
\kdummy  =  \left(\frac{|\Omega_c|}{\kappac} + \frac{|\Omega_m|}{ \kappam}\right)^{-1}.
$$ 
Finally, the tensile force $F^\text{ECM}$ obtained from the embedded cell method is given by 
$$ 
F^\text{ECM} = l \left(\frac{|\Omega_m|}{\kappam}+ \frac{|\Omega_c|}{\kappac}\right)^{-1},
$$ 
which is exactly the same as above.
Hence, we have proven

\begin{theorem}[correctness result]
	The sequence $(\kdummy(n))_{n \in \N}$ defined in algorithm \ref{alg:emc_detail_1d} converges to 
	\begin{align}\label{eq:limit_dummy_material}
		\kdummy = \left(\dfrac{\snorm{\Omega_m}}{\kappam} + \dfrac{\snorm{\Omega_c}}{\kappac}\right)^{-1}
	\end{align}
and the sequence $(F^n)_{n \in \N}$ defined in algorithm \ref{alg:emc_detail_1d} converges to
	\begin{align}\label{limit_force}
		F^\text{ECM} = l \left(\dfrac{\snorm{\Omega_m}}{\kappam} + \dfrac{\snorm{\Omega_c}}{\kappac}\right)^{-1} .
	\end{align}
Moreover, we have
\begin{align}
     \kdummy =  \kappa^{\text{hom}} 
     \end{align}
     and
     \begin{align}
		F^\text{ECM} =  F^{\text{hom}}.
		\end{align}
\end{theorem}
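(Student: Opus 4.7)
The plan is to exploit the fact that in one dimension, with piecewise constant longitudinal modulus $\kappa$, the weak solution to $-(\kappa u')'=0$ on $(0,1)$ with $u(0)=0,\,u(1)=l$ is explicit: integration by parts together with testing against arbitrary $v\in\calW_0$ forces $\kappa u'$ to equal some constant $\alpha\in\R$ almost everywhere, so $u'=\alpha/\kappa$ in each interval of constancy. Then the Dirichlet condition $u(1)=l$ determines $\alpha$ uniquely by integrating $u'$ across $(0,1)$, and the tensile force at $x=1$ is $F=\kappa(1)u'(1)=\alpha$. I would carry this computation out twice: first for the analytic homogenization problem with $\kappa_n(x)=\kappa(2^nx)$ to obtain the $n$-independent formulas
\[
\alpha=l\left(\frac{|\Omega_m|}{\kappam}+\frac{|\Omega_c|}{\kappac}\right)^{-1},\qquad F^{\text{hom}}=\alpha,\qquad \kappa^{\text{hom}}=\alpha/l,
\]
which is already essentially done in the text just preceding the theorem.

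Next I would apply the same explicit solution procedure to the embedded-cell configuration $\kappa^n$ that is constant $\kdummy(n)$ on $[0,2/5)\cup[3/5,1]$, constant $\kappam$ on the two metallic intervals, and constant $\kappac$ on the central ceramic interval (with the radii chosen according to \eqref{eq:volume_fraction_ecm}). The displacement condition $u^n(1)=l$ gives
\[
l=\frac{|\Omega_c|}{5}\frac{\alpha}{\kappac}+\frac{|\Omega_m|}{5}\frac{\alpha}{\kappam}+\frac{4}{5}\frac{\alpha}{\kdummy(n)},
\]
so $F^{n+1}=\alpha$ is determined, and plugging into \eqref{kequn} and \eqref{kdn} yields the recursion \eqref{kdit} for $\kdummy(n+1)$ as a function of $\kdummy(n)$.

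Now the crucial step is to show $\kdummy(n)$ converges. I would define $\Phi(\kappa):=\bigl(\tfrac{|\Omega_c|}{5\kappac}+\tfrac{|\Omega_m|}{5\kappam}+\tfrac{4}{5\kappa}\bigr)^{-1}$ on $(0,\infty)$ and check that $\Phi$ is a strict contraction on any interval bounded away from $0$, for instance by computing $\Phi'$ explicitly and bounding $|\Phi'(\kappa)|<1$ uniformly on such an interval; together with an invariant interval containing $\kdummy(0)$ as given by \eqref{kd0}, Banach's fixed point theorem then provides a unique limit $\kdummy$. Solving the fixed point equation $\kdummy=\Phi(\kdummy)$ algebraically (multiply out, cancel the $4/(5\kdummy)$ contribution against $4/5$ on the left side after inverting) immediately produces the closed form \eqref{eq:limit_dummy_material}, and passing to the limit in the formula for $F^{n+1}$ gives \eqref{limit_force}. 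Comparison of \eqref{eq:limit_dummy_material}, \eqref{limit_force} with the homogenized values $\kappa^{\text{hom}}$ and $F^{\text{hom}}$ computed in the first step yields the identities $\kdummy=\kappa^{\text{hom}}$ and $F^\text{ECM}=F^{\text{hom}}$.

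The main obstacle is essentially bookkeeping rather than conceptual: one must verify the contraction estimate for $\Phi$ with explicit quantitative control so that the iteration stays inside a positive interval, since a priori $\kdummy(n)$ could approach $0$ or $\infty$ and the recursion would break down. Once an invariant positive interval is identified (which follows from the convex-combination structure of the reciprocals together with $\kappam,\kappac>0$), the algebra is routine and the two identities $\kdummy=\kappa^{\text{hom}}$ and $F^\text{ECM}=F^{\text{hom}}$ drop out by direct inspection of the two closed forms.
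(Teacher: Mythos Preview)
Your proposal is correct and follows essentially the same route as the paper: explicit solution of the piecewise-constant one-dimensional problem, derivation of the recursion \eqref{kdit}, contraction argument for convergence, algebraic solution of the fixed-point equation, and direct comparison with the homogenized values. If anything, you are slightly more careful than the paper in spelling out the need for an invariant positive interval and the bound on $|\Phi'|$; the paper simply asserts that the right-hand side of \eqref{kdit} defines a contraction and proceeds.
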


The calculations made above for periodic materials do not actually depend on the distribution, i.e. they also hold for any other material function $\kappa: \R \rightarrow \{\kappam, \kappac\}$ with the property that
\begin{align}
	\delta \snormleftright{\operatorname{supp}\{\kappa - \kappam\} \cap \left[0,\dfrac{1}{\delta}\right]} \xlongrightarrow{\delta \searrow 0} \snorm{\Omega_c}.
	\label{eq:property_material_func}
\end{align}

Before using this observation to prove a generalized homogenization result, the de\-finition of a random material function has to be given.

\begin{defi}[random material function]\label{def:random_material_function}
	Let $x \in Y := (0,1)$ and $\R$ be decomposed by
	\begin{align*}
		\R = \overline{\bigcup_{q \in \Z} Y_q}
	\end{align*}
	where $Y_q := q + x + Y$.
	The set of possible material functions $X$ includes all functions which are constant on the $Y_q$:
	\begin{align*}
		X := \{a : \R \rightarrow \{\kappam, \kappac\} \vert a\vert_{Y_q} \text{ is constant for all } q \in \Z \text{ for a}\; x \in Y\},
	\end{align*}
	where the probability that $a\vert_{Y_q} = \kappam$ is $\snorm{\Omega_m}$ and therefore, the probability that $a\vert_{Y_q} = \kappac$ is $\snorm{\Omega_c}$.
\end{defi}

There exists a probability measure $\mathcal{P}$ such that $(X, \mathcal{A}, \mathcal{P})$ is a probability space. 
Here, $\mathcal{A}$ is the $\sigma$-algebra of measurable subsets of $X$.
Using this framework and the law of large numbers the following theorem can be proven.

\begin{theorem}[stochastic homogenization]\label{thm:random_convergence}
	Let $(X, \mathcal{A}, \mathcal{P})$ be the probability space introduced above and for $\delta > 0$ and $\omega \in X$ let $F^{\delta}(\omega)$ be the tensile force belonging to the weak solution $u \in \calW_l$ of 
	\begin{align*}
		&\left(\omega\left(\dfrac{x}{\delta}\right) u'(x)\right)' = 0 \qquad \text{ in } \Omega.
	\end{align*}
	Then for almost all material functions $\omega$ in $X$ we have
	\begin{align}
		\lim_{\delta \rightarrow 0}  F^{\delta}(\omega) = l \left(\frac{|\Omega_m|}{\kappam}+ \frac{|\Omega_c|}{\kappac}\right)^{-1} =  F^{\text{hom}} = F^\text{ECM} .
		\label{eq:force_random_material}
	\end{align}
	\end{theorem}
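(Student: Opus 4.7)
The plan is to derive an explicit integral representation of $F^{\delta}(\omega)$ and then invoke the strong law of large numbers to identify its almost-sure limit.

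First, I would solve the one-dimensional boundary value problem with coefficient $\omega(\cdot/\delta)$ by repeating the periodic calculation performed earlier in this section. Testing the weak form against arbitrary $v \in \calW_0$ forces $\omega(x/\delta)\,u'(x)$ to coincide almost everywhere with a constant $\alpha_\delta$, so $u'(x) = \alpha_\delta/\omega(x/\delta)$. Integrating and using $u(1) - u(0) = l$ gives
\begin{equation*}
\alpha_\delta = l\left(\int_0^1 \frac{dx}{\omega(x/\delta)}\right)^{-1},
\end{equation*}
and since $F^{\delta}(\omega) = \alpha_\delta$ (the one-dimensional specialization of the tensile-force formula, which remains valid in the weak sense by Theorem \ref{thm:tensile_force}), this yields the closed form
\begin{equation*}
F^{\delta}(\omega) = l\left(\int_0^1 \frac{dx}{\omega(x/\delta)}\right)^{-1}.
\end{equation*}

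Next I would perform the change of variables $y = x/\delta$, turning the denominator into $\delta\int_0^{1/\delta} 1/\omega(y)\,dy$. By the definition of the probability space $(X,\mathcal{A},\mathcal{P})$, the values of $1/\omega$ on the unit-length cells $Y_q$ are i.i.d.\ random variables taking the values $1/\kappam$ and $1/\kappac$ with probabilities $\snorm{\Omega_m}$ and $\snorm{\Omega_c}$, so that their common expectation is
\begin{equation*}
\mathbb{E}\left[\left.\frac{1}{\omega}\right|_{Y_q}\right] = \frac{\snorm{\Omega_m}}{\kappam} + \frac{\snorm{\Omega_c}}{\kappac}.
\end{equation*}

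The main step is then to apply Kolmogorov's strong law of large numbers to the Birkhoff sum $\sum_{q=0}^{N-1}(1/\omega)|_{Y_q}$ with $N_\delta := \lfloor 1/\delta\rfloor$, which, together with $\delta N_\delta \to 1$, gives
\begin{equation*}
\delta\int_0^{1/\delta} \frac{dy}{\omega(y)} \xlongrightarrow{\delta \searrow 0} \frac{\snorm{\Omega_m}}{\kappam} + \frac{\snorm{\Omega_c}}{\kappac}
\end{equation*}
for $\mathcal{P}$-almost every $\omega \in X$. Substituting this into the closed-form expression for $F^{\delta}(\omega)$ yields the claimed limit, and comparison with the values of $F^{\text{hom}}$ and $F^{\text{ECM}}$ computed earlier in this section identifies it with both. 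The only delicate point is the edge contribution from the incomplete cell near $y = 1/\delta$, but it is bounded by $\delta \cdot \max(1/\kappam,1/\kappac)$ and therefore vanishes as $\delta \to 0$, so it does not affect the almost-sure limit. Everything else is a direct consequence of the explicit one-dimensional tensile-force formula combined with the classical SLLN.
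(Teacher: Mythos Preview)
Your proof is correct and follows essentially the same strategy as the paper: both arguments rest on the explicit one-dimensional force formula together with the strong law of large numbers. The only cosmetic difference is that the paper applies the SLLN to the Bernoulli indicators $X_q$ (thereby verifying the volume-fraction property \eqref{eq:property_material_func} and then invoking the earlier remark that the periodic calculation depends only on this volume fraction), whereas you apply the SLLN directly to the cell values $1/\omega|_{Y_q}$; since these are affine images of the $X_q$, the two computations are equivalent.
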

\begin{proof}
	According to the remark above it is sufficient to show the property \eqref{eq:property_material_func}: 
	In order to do so, consider a sequence of Bernoulli random variables $(X_q)_{q \in \Z}$, i.e. the $X_q$ are random variables which assign only the two values 0 and 1 with probabilities $P[X_q = 0] = \snorm{\Omega_m}$ and $P[X_q = 1] = 1 - \snorm{\Omega_m} = \snorm{\Omega_c}$. 
	Then, every $\omega \in X$ can be written as
	\begin{align*}
		\omega = \kappam + (\kappac - \kappam) \sum_{q \in \Z} \chi_{Y_q}\, x_q 
	\end{align*}
	where $x_q$ is a realization of the random variable $X_q$.	
	Furthermore, the strong law of large numbers can be applied to the sequence $(X_q)_{q \in \Z}$ but also to the sequence $(X_n)_{n \in \N}$. This yields 
	\begin{align*}
		\lim_{n \rightarrow \infty} \dfrac{1}{n} \sum_{i = 0}^n x_n = \snorm{\Omega_c}
	\end{align*}
	for almost all realizations of the $X_n$ and since such a realization gives an element $\omega$ of the probability space $(X, \mathcal{A}, \mathcal{P})$, this means almost surely convergence in this probability space.
	Summarizing, this yields with $\delta := n^{-1}$ that
	\begin{align*}
		\lim_{\delta \rightarrow 0} \delta \snormleftright{\operatorname{supp}(\omega - \kappam) \cap \left[0, \dfrac{1}{\delta}\right]} &= \lim_{n \rightarrow \infty} \dfrac{1}{n} \snormleftright{\operatorname{supp}\left((\kappac - \kappam)\sum_{i \in \N_0} \chi_{Y_i} x_i\right) \cap [0,n]} \\
		&= \lim_{n \rightarrow \infty} \dfrac{1}{n} \snormleftright{\operatorname{supp}\left(\sum_{i \in \N_0} \chi_{Y_i} x_i\right) \cap [0,n]} \\
		&= \lim_{n \rightarrow \infty} \dfrac{1}{n} \snormleftright{\operatorname{supp}\left(\sum_{i =0}^n \chi_{Y_i} x_i\right)} \\
		&= \lim_{n \rightarrow \infty} \dfrac{1}{n} \sum_{i = 0}^n x_i = \snorm{\Omega_c}
	\end{align*}
	holds for almost every $\omega \in X$.
	Notice that this convergence also holds for any arbitrary sequence of $\delta$ tending to zero.
	Thus, \eqref{eq:property_material_func} holds almost surely in $(X, \mathcal{A}, \mathcal{P})$.
\end{proof}

We will finish this section by illustrating that the embedded cell method can also be applied to elasto-plastic materials. We will show for a simple 1D plasticity model
the existence of the limits and that the computed  limits give the correct
strain stress curves of the full problem.
Again the answer can be obtained by explicit calculations.
We extend the previous modeling of the metal
by choosing a simple nonlinear stress-strain curve for metal, namely
$$ 
F_{m}(l) = \left\{\begin{array}{cl} \alpha l, & \mbox{for  } l \in (0,l_0), \\
  \alpha l_0 + \beta (l-l_0)^{1/2}, & \mbox{for  } l \geq l_0, \end{array} \right.
$$ 
or equivalently 
$$ 
F_{m}(u_m') = \left\{\begin{array}{cl} \alpha u_m', & \mbox{for  } u_m'  \leq u_{m,crit}',  \\
  \alpha  u_{m,crit}'+ \beta (u_m'-u_{m,crit}')^{1/2}, & \mbox{for  } u_m'  > u_{m,crit}'. \end{array} \right.
$$ 
For simplicity, we consider a mixture $ 1/1 $ of metal and ceramics, and again we have a piecewise linear solution.
As a consequence  the length of the metal phase is $ u_m'/2 $ and of the ceramic phase $ u_c'/2 $.
Moreover, we can restrict ourselves to the case $ u_m'  > u_{m,crit}' $ since the  other case has already 
been handled.
Since the forces in both phases must be the same, we have 
$$ 
F = \kappac u_c' = \alpha u_{m,crit}' + \beta ((u_m'-u_{m,crit}')^{1/2}).
$$
From this we find 
\begin{eqnarray*}
u_c' & = & F/\kappac ,\\
u_m' & = & u_{m,crit}' + ( F - \alpha  u_{m,crit}')^2/\beta.
\end{eqnarray*}
Since $ l = (u_c' +u_m' )/2 $, we find the strain-stress relation 
\begin{equation} \label{ssr2}
\left( F/\kappac +  u_{m,crit}' + ( F - \alpha  u_{m,crit}')^2/\beta  \right)/2 = l .
\end{equation}
Because this relation holds for all material distributions with a mixture 1/1 of metal and ceramics, it remains valid if $F$ is replaced by  $F^{\text{hom}}$, where $F^{\text{hom}}$ is
defined as above.
 
For the embedded cell we use the same geometry as above.
 The length of the metal phase is now $ u_m'/10 $, of the ceramic phase $ u_c'/10 $, and 
 of the dummy material $ 4 u_d'/5 $.
Since the forces in all phases must be the same, we have 
$$ 
F = \kappa_c u_c' = \alpha u_{m,crit}' + \beta ((u_m'-u_{m,crit}')^{1/2}) =  \kappa_d u_d'
$$
From this we find 
\begin{eqnarray*}
u_c' & = & F/\kappac ,\\
u_m' & = & u_{m,crit}' + ( F - \alpha  u_{m,crit}')^2/\beta, \\
u_d' & = & F/\kappa_d .
\end{eqnarray*}
Since $ l = (u_c' +u_m' )/10 + 4 u_d' /5 $, we find the stress strain relation 
$$
\left( F/\kappa_c +  u_{m,crit}' + ( F - \alpha  u_{m,crit}')^2/\beta  \right)/10 + 4F /(5 \kappa_d) = l = F /\kappa^\text{equiv}.
$$
The iteration process for the computation of $ \kdummy $ is given by 
$$ 
\left( F/\kappac +  u_{m,crit}' + ( F - \alpha  u_{m,crit}')^2/\beta  \right)/10 + 4K /(5 \kdummy(n)) = l = F /\kdummy(n+1) .
$$
The mapping $ \kdummy(n)) \mapsto \kappa_{d,n+1} $ is a contraction, and so the limit 
$\kdummy  = \lim_{n \to \infty}$ $\kdummy(n)$ 
exists and $F^\text{ECM}$ also satisfies relation \eqref{ssr2}.

\section{Perturbation theory for the two dimensional case}\label{chap:perturbation_theory}
\subsection{Approximate solutions of the two dimensional model equations}\label{sec:perturbation_model}
Now, we address the question of the correctness of the embedded cell method in two dimensions
for metal-ceramic composite materials with constant shear modulus and slightly varying first Lamé parameter with the help of perturbation theory. The first step of our approach is to construct approximate solutions to the two dimensional model equations
\eqref{eq:DGL2D} - \eqref{eq:BC2D3} in the case of slightly varying $\lambda$ and $\mu$ and estimate the accuracy of these approximations.

We assume that $\lambda$ and $\mu$ are of the form
\begin{align}
\lambda(x) \;=\;	\lambda^\varepsilon(x) &= \lambda_0 + \varepsilon \lpert(x) ,\label{eq:lambda_pert}\\
\mu(x) \;=\;	\mu^\varepsilon(x) &= \mu_0 + \varepsilon \mpert(x) ,\label{eq:mu_pert}
\end{align}
with constants $\lambda_0, \mu_0 > 0$, $\lpert, \mpert \in L^{\infty}(\overline{\Omega})$, and
$\varepsilon \in (0,\varepsilon_0)$, where $\varepsilon_0$ is so small that 
$\lambda$ and $\mu$ satisfy \eqref{lbound} and \eqref{mbound} uniformly with respect to $\varepsilon$.

For the approximation, we make the ansatz
\begin{align}
	\uapprox^\varepsilon = u_0 + \varepsilon u_1.
	\label{ansatz}
\end{align}
Inserting \eqref{eq:lambda_pert} - \eqref{ansatz} into the weak formulation \eqref{eq:weak_solution} of \eqref{eq:DGL2D} - \eqref{eq:BC2D3} and equating the coefficients in front of the $\varepsilon^{m}$ with $m \in \{0,1\}$ yields that $u_0 \in \calW_l$ and $u_1 \in \calW_0$ are given as the unique solutions to
\begin{align}
	\int_\Omega \lambda_0 \trace(\nabla^s u_0) \trace(\nabla^s v) + 2\mu_0 \nabla^s u_0 : \nabla^s v dx &= 0 \label{eq:equation_for_u0}
\end{align}
for all $v \in \calW_0$, and
\begin{align}	
	\int_\Omega \lambda_0 \trace(\nabla^s u_1) \trace(\nabla^s v) + 2\mu_0 \nabla^s u_1 : \nabla^s v dx &= F_1(v)
\label{eq:equation_for_u1}
\end{align}	
for all $v \in \calW_0$, where
\begin{align}	
F_1(v)	&= - \int_\Omega \lpert \trace(\nabla^s u_0) \trace(\nabla^s v) + 2\mpert \nabla^s u_0 : \nabla^s v dx.
\end{align}
Notice that both weak solutions exist since the assumptions of theorem \ref{thm:existence_uniqueness} are satisfied.

Because $\lambda_0, \mu_0$ are constant real numbers, the solution $u_0$ can be explicitly computed by inserting the ansatz 
\begin{align*}
	u_0 (x)= \left(\begin{array}{cc}
		a_1 & 0 \\
		0 & a_2
	\end{array}\right) x + \left(\begin{array}{c}
		b_1 \\
		b_2
	\end{array}\right)
\end{align*}
into \eqref{eq:equation_for_u0}, using Green's formula as well as the fundamental lemma of calculus of variations and taking into account that $u_0 \in \calW_l$, which yields
\begin{align}
	u_0 (x)= \left(\begin{array}{cc}
		-\nu_0 l & 0 \\
		0 & l
	\end{array}\right) x + \left(\begin{array}{c}
		\frac{1}{2} \nu_0 l \\
		0
	\end{array}\right), \qquad \nu_0 = \dfrac{\lambda_0}{\lambda_0 + 2\mu_0},
	\label{eq:u_0}
\end{align}
where $\nu_0$ is the so-called Poisson number.

Concerning the accuracy of the approximation $\uapprox$ we have the following theorem.
\begin{theorem}[approximation result]\label{thm:approximation_result_u}
	Let $l \in \R$ be given, $u^{\varepsilon}\in \calW_l$ be the unique weak solution of \eqref{eq:DGL2D} - \eqref{eq:BC2D3}, where $\lambda$ and $\mu$ satisfy \eqref{eq:lambda_pert} -   
\eqref{eq:mu_pert}, and $\uapprox^\varepsilon \in \calW_l$ be given by \eqref{ansatz} - \eqref{eq:equation_for_u1}. Then, there exists a constant $C>0$ independent of $\varepsilon > 0$ such that
	\begin{align} \label{estR}
		\|u^\varepsilon - \uapprox^\varepsilon\|_{H^1(\Omega)} \leq C\varepsilon^2.
	\end{align}
\end{theorem}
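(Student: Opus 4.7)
The plan is to set $R^\varepsilon := u^\varepsilon - \uapprox^\varepsilon$ and use Theorem \ref{thm:existence_uniqueness} to estimate $R^\varepsilon$ by the size of the consistency error produced when $\uapprox^\varepsilon$ is inserted into the weak formulation with the $\varepsilon$-dependent coefficients. Since both $u^\varepsilon$ and $\uapprox^\varepsilon$ satisfy the prescribed Dirichlet data (the first by assumption, the second because $u_0 \in \calW_l$ and $u_1 \in \calW_0$), we have $R^\varepsilon \in \calW_0$.

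First, I would expand the bilinear form evaluated at $\uapprox^\varepsilon = u_0 + \varepsilon u_1$ with $\lambda^\varepsilon, \mu^\varepsilon$ as in \eqref{eq:lambda_pert}--\eqref{eq:mu_pert} and sort by powers of $\varepsilon$. The $\varepsilon^0$ contribution vanishes by \eqref{eq:equation_for_u0}, the $\varepsilon^1$ contribution vanishes by \eqref{eq:equation_for_u1}, and the only surviving term is the $\varepsilon^2$ remainder
\begin{align*}
R^\varepsilon_{\mathrm{res}}(v) \;=\; -\,\varepsilon^2 \int_\Omega \lpert\,\trace(\nabla^s u_1)\,\trace(\nabla^s v) + 2\mpert\,\nabla^s u_1 : \nabla^s v \, dx.
\end{align*}
Subtracting the weak equation satisfied by $u^\varepsilon$ then yields, for every $v \in \calW_0$,
\begin{align*}
\int_\Omega \lambda^\varepsilon\,\trace(\nabla^s R^\varepsilon)\,\trace(\nabla^s v) + 2\mu^\varepsilon\,\nabla^s R^\varepsilon : \nabla^s v \, dx \;=\; R^\varepsilon_{\mathrm{res}}(v).
\end{align*}

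Next, I would verify the hypotheses needed to apply the a priori bound \eqref{ellest} to this equation with $l=0$. The coefficients $\lambda^\varepsilon,\mu^\varepsilon$ satisfy \eqref{lbound}--\eqref{mbound} uniformly in $\varepsilon$ by the choice of $\varepsilon_0$, so the constant in \eqref{ellest} is independent of $\varepsilon$. The right-hand side $R^\varepsilon_{\mathrm{res}}$ lies in $\calW_0'$ because $\lpert,\mpert \in L^\infty(\Omega)$ and Cauchy--Schwarz gives
\begin{align*}
|R^\varepsilon_{\mathrm{res}}(v)| \;\leq\; C\,\varepsilon^2\,\bigl(\|\lpert\|_{L^\infty} + \|\mpert\|_{L^\infty}\bigr)\,\|u_1\|_{H^1(\Omega)}\,\|v\|_{H^1(\Omega)}.
\end{align*}
The invariance condition \eqref{eq:invarience_right_side} holds trivially since $\nabla^s(v + c e_1) = \nabla^s v$. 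Finally, because $u_1$ solves \eqref{eq:equation_for_u1} with a right-hand side $F_1$ of exactly the same structure (and $\nabla^s u_0$ is constant by \eqref{eq:u_0}), a first application of Theorem \ref{thm:existence_uniqueness} to the equation defining $u_1$ gives $\|u_1\|_{H^1(\Omega)} \leq C$ independently of $\varepsilon$.

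Putting these ingredients together, a second application of the a priori estimate \eqref{ellest} to the equation for $R^\varepsilon$ (with $l=0$) yields
\begin{align*}
\|R^\varepsilon\|_{H^1(\Omega)} \;\leq\; C\,\|R^\varepsilon_{\mathrm{res}}\|_{\calW_0'} \;\leq\; C\,\varepsilon^2,
\end{align*}
which is \eqref{estR}. The main technical point is the bookkeeping of the $\varepsilon$-expansion of the bilinear form and the verification that both invocations of Theorem \ref{thm:existence_uniqueness} are legitimate, in particular that the invariance condition \eqref{eq:invarience_right_side} is satisfied by the functionals $F_1$ and $R^\varepsilon_{\mathrm{res}}$; once this is established, everything else follows from the uniform ellipticity of the coefficients.
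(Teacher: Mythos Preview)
Your proposal is correct and follows essentially the same approach as the paper: define the error $R^\varepsilon = u^\varepsilon - \uapprox^\varepsilon \in \calW_0$, observe that it solves the weak problem with $\varepsilon$-dependent coefficients and the $\varepsilon^2$-residual right-hand side coming from the perturbation terms acting on $u_1$, then apply the a priori estimate \eqref{ellest} of Theorem~\ref{thm:existence_uniqueness} twice (once to bound $\|u_1\|_{H^1}$ via \eqref{eq:equation_for_u1}, once to bound $\|R^\varepsilon\|_{H^1}$). Your explicit check of the invariance condition \eqref{eq:invarience_right_side} is a nice touch that the paper leaves implicit.
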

\begin{proof}
By construction of $\uapprox^\varepsilon$ we have $\uapprox^\varepsilon \in \calW_l$ and 
\begin{align*}
	\int_\Omega \lambda^\varepsilon \trace(\nabla^s \uapprox^\varepsilon) \trace(\nabla^s v) + 2\mu^\varepsilon \nabla^s \uapprox^\varepsilon : \nabla^s v dx &= - \widetilde{F}(v)
\end{align*}
for all $v \in \calW_0$, where	
\begin{align*}	
\widetilde{F}(v)	&= -\varepsilon^2 \int_\Omega \lpert \trace(\nabla^s u_1) \trace(\nabla^s v) + 2\mpert \nabla^s u_1 : \nabla^s v dx .
\end{align*}
Therefore, $R:= u^\varepsilon - \uapprox^\varepsilon \in \calW_0$ solves
\begin{align*}
	&\int_\Omega \lambda^\varepsilon \trace(\nabla^s R) \trace(\nabla^s v) + 2\mu^\varepsilon \nabla^s R : \nabla^s v dx = \widetilde{F}(v)
\end{align*}
for all $v \in \calW_0$.
Since all assumptions of the existence and uniqueness theorem \ref{thm:existence_uniqueness} are satisfied, we obtain
\begin{align}
\|R\|_{H^{1}(\Omega)} &\leq C_1 \|\widetilde{F}\|_{\calW_0'} \nonumber \\
&= C_1 \sup_{\substack{v \in \calW_0, \\  \norm[H^1(\Omega)]{v} = 1}} \Big| \varepsilon^2 \int_\Omega \lpert \trace(\nabla^s u_1) \trace(\nabla^s v) + 2\mpert \nabla^s u_1 : \nabla^s v dx \Big| \nonumber \\
&\leq \varepsilon^2 C_2 (\|\lpert\|_{L^{\infty}(\Omega)} + 2 \|\mpert\|_{L^{\infty}(\Omega)} )\norm[H^1(\Omega)]{u_1} \label{estRtF}  
\end{align}
for constants $C_1,C_2>0$. Moreover, because $u_1 \in  \calW_0$ solves \eqref{eq:equation_for_u1}, we can use again theorem \ref{thm:existence_uniqueness} as well as \eqref{eq:u_0} to get
\begin{align}
\|u_1\|_{H^{1}(\Omega)} &\leq C_3 \|F_1\|_{\calW_0'} \,\leq\, C_4  \label{estu_1}
\end{align}
for constants $C_3,C_4>0$. 

Now, combining \eqref{estRtF} and \eqref{estu_1} yields \eqref{estR}.
\end{proof}

Hence, the relative error of the $\uapprox^\varepsilon$ is small such that it is reasonable to use the approximation $\uapprox^\varepsilon$  instead of the exact solution $u^\varepsilon$.

\subsection{Approximate solutions of the homogenization problem}
\label{sec:apphom}
Next, we construct approximate solutions to the homogenization problem \eqref{anahom} and estimate the accuracy of these approximations. The main advantage of our approximate solutions will be that they can be explicitly computed.

We consider the family of perturbed material parameters
\begin{align}
	\lambda^{\varepsilon,\delta}(x) &= \lambda_0 + \varepsilon \lpert\left(\dfrac{x}{\delta}\right) =: \lambda_0 + \varepsilon \lpert^\delta(x), \\
	\mu^{\varepsilon,\delta}(x) &= \mu_0 + \varepsilon \mpert\left(\dfrac{x}{\delta}\right) =: \mu_0 + \varepsilon \mpert^\delta(x),
\end{align}
with $\delta > 0$, periodic functions $\lpert, \mpert \in L^{\infty}(\R^2)$ and $\lambda_0$, $\mu_0$ and $\varepsilon$ as in \eqref{eq:lambda_pert}-\eqref{eq:mu_pert}.
Then, according to theorem \ref{thm:approximation_result_u} the family of solutions $(u^{\varepsilon,\delta})_{\delta > 0} \subset \calW_l$ to
\begin{align}
	\int_\Omega \lambda^{\varepsilon,\delta} \trace(\nabla^s v) \trace(\nabla^s u^{\varepsilon,\delta}) + 2 \mu^{\varepsilon,\delta} \nabla^s v : \nabla^s u^{\varepsilon,\delta} dx = 0
	\label{eq:homogenization_equation_full}
\end{align}
for all $v \in \calW_0$ is of the form
\begin{align}
	u^{\varepsilon,\delta} = u_0 + \varepsilon u_1^\delta + \calO(\varepsilon^2),
	\label{eq:approx_homogenization}
\end{align}
where $u_0$ is given by \eqref{eq:u_0} and $u_1^\delta \in \calW_0$ solves 
\begin{align*}
	&\int_\Omega \lambda_0 \trace(\nabla^s u^\delta_1) \trace(\nabla^s v) + 2\mu_0 \nabla^s u^\delta_1 : \nabla^s v dx\\
& \qquad \quad = - \int_\Omega \lpert^\delta \trace(\nabla^s u_0) \trace(\nabla^s v) + 2\mpert^\delta \nabla^s u_0 : \nabla^s v dx
\end{align*}
for all $v \in \calW_0$.

According to the homogenization theorem \ref{thm:homogenization_convergence_result}, there exists a unique $u^{\varepsilon,\text{hom}} \in \calW_l$ and unique constants $\lambda^{\varepsilon,\text{hom}}, \mu^{\varepsilon,\text{hom}} \in \R$ such that 
	\begin{align} \label{eff1}
		\begin{cases}
			u^{\varepsilon,\delta} \rightharpoonup u^{\varepsilon,\text{hom}} & \text{ in } \calW_l \\
			\lambda^{\varepsilon,\delta} \trace(\nabla^s u^{\varepsilon,\delta}) I + 2 \mu^{\varepsilon,\delta} \nabla^s u^{\varepsilon,\delta} \rightharpoonup \lambda^{\varepsilon,\text{hom}} \trace(\nabla^s u^{\varepsilon,\text{hom}}) I + 2\mu^{\varepsilon,\text{hom}} \nabla^s u^{\varepsilon,\text{hom}} & \text{ in } L^2(\Omega, \R^{2\times 2})
		\end{cases}
	\end{align}
for $\delta \rightarrow 0$	and
	\begin{align} \label{eff2}
		\int_\Omega \lambda^{\varepsilon,\text{hom}} \trace(\nabla^s u^{\varepsilon,\text{hom}}) \trace(\nabla^s v) + 2\mu^{\varepsilon,\text{hom}} \nabla^s u^{\varepsilon,\text{hom}} : \nabla^s v dx = 0
	\end{align}
	for all $v \in \calW_0$.

To construct an approximation for $u^{\varepsilon,\text{hom}}$ we consider the family 
$(u_\text{approx}^{\varepsilon,\delta})_{\delta > 0} \subset \calW_l$ with $u^{\varepsilon,\delta}_\text{approx} := u_0 + \varepsilon u_1^{\delta}$.  By construction of $\lambda^{\varepsilon,\delta}$ and $\mu^{\varepsilon,\delta}$, estimate \eqref{ellest} implies that $(u_\text{approx}^{\varepsilon,\delta})_{\delta > 0}$ is uniformly bounded with respect to $\delta$ in $H^1(\Omega)$.  Because $H^1(\Omega)$ is a Hilbert space and  $\calW_l$ is closed in $H^1(\Omega)$, there exists a $u^{\varepsilon,\text{hom}}_\text{approx} \in \calW_l$ and a sequence  $(\delta_n)_{n \in \N}$  with $\delta_{n} \rightarrow 0$ for $n \rightarrow \infty$ such that
\begin{align}
\label{u1hseq}
	u^{\varepsilon,\delta_{n}}_\text{approx}  \rightharpoonup u^{\varepsilon, \text{hom}}_\text{approx} =: u_0 + \varepsilon u_1^{\text{hom}}
\end{align}
in $H^1(\Omega)$ for $n \rightarrow \infty$.

It is possible to compute $\uapprox^{\varepsilon,\text{hom}}$ explicitly.
The component $u_0$ is explicitly given by \eqref{eq:u_0}. To compute $u_1^{\text{hom}}$, we
use that the functions $u_1^{\delta_{n}}$ from \eqref{u1hseq} solve
\begin{align*}
	B(u_1^{\delta_{n}},v) = f(\lpert^{\delta_{n}},\mpert^{\delta_{n}},v)  
\end{align*}
for all $v \in \calW_0$, where 
\begin{align*}
	B(u, v) &= \int_\Omega \lambda_0 \trace(\nabla^s u) \trace(\nabla^s v) + 2\mu_0 \nabla^s u : \nabla^s v dx, \\
	f(\lambda,\mu, v) &= -\int_\Omega \lambda \trace(\nabla^s u_0) \trace(\nabla^s v) + 2\mu \nabla^s u_0 : \nabla^s v dx.
\end{align*}
Since $\lpert$ and $\mpert$ are assumed to be periodic it holds 
\begin{align}
	&\lpert^{\delta_{n}} \rightharpoonup \bar{\lambda} := \dfrac{1}{\snorm{\Omega}} \int_\Omega \lpert dx  \label{lhom}\\
	&\mpert^{\delta_{n}} \rightharpoonup \bar{\mu} := \dfrac{1}{\snorm{\Omega}} \int_\Omega \mpert dx \label{mhom}
\end{align}
in $L^2(\Omega)$ for $\delta_{n} \rightarrow 0$. Because of \eqref{u1hseq}, \eqref{lhom} and \eqref{mhom} we obtain
\begin{align*}
	B(u_1^\text{hom},v) = f(\bar{\lambda},\bar{\mu},v)  
\end{align*}
for all $v \in \calW_0$. 

Hence, $u_1^\text{hom} \in \calW_0$ is the (according to theorem \ref{thm:existence_uniqueness} unique) solution to
\begin{align}
	&\int_\Omega \lambda_0 \trace(\nabla^s u_1^\text{hom}) \trace(\nabla^s v) + 2\mu_0 \nabla^s u_1^\text{hom} : \nabla^s v dx \nonumber \\ 
	&\qquad\quad = - \int_\Omega \bar{\lambda} \trace(\nabla^s u_0) \trace(\nabla^s v) + 2\bar{\mu} \nabla^s u_0 : \nabla^s v dx
	\label{eq:limit_u_1_hom}
\end{align}
for all $v \in \calW_0$.

Because $\bar{\lambda}, \bar{\mu}$ are constant real numbers, the solution $u_1^\text{hom}$ can be explicitly computed by inserting the ansatz 
\begin{align*}
	u_1^\text{hom}(x) = \left(\begin{array}{cc}
		a_1 & 0 \\
		0 & a_2
	\end{array}\right) x + \left(\begin{array}{c}
		b_1 \\
		b_2
	\end{array}\right)
\end{align*}
into \eqref{eq:limit_u_1_hom}, using Green's formula as well as the fundamental lemma of calculus of variations and taking into account that $u_1^\text{hom} \in \calW_0$, which yields
\begin{align}
	u_1^\text{hom}(x) = \dfrac{\bar{\lambda} (1-\nu_0) l - 2\bar{\mu} \nu_0 l}{2(\lambda_0 + 2 \mu_0)} \left(\begin{array}{cc}
		-2 & 0 \\
		0 & 0
	\end{array}\right) x + \left(\begin{array}{c}
		1 \\ 0
	\end{array}\right).
	\label{eq:u_1_hom}
\end{align}
To compute $u_1^\text{hom}$ we have used that $\delta_{n} \rightarrow 0$ but not the special values of  the sequence $(\delta_{n})_{n \in \N}$  such that we have  $u^{\varepsilon,\delta_{n}}_\text{approx}  \rightharpoonup u^{\varepsilon, \text{hom}}_\text{approx} $ for any sequence  $(\delta_{n})_{n \in \N}$ with  $\delta_{n} \rightarrow 0$.

Now, an interesting question is if an analogous result to theorem \ref{thm:approximation_result_u} can be proven for $u^{\varepsilon, \text{hom}}_\text{approx} $.
In fact, such a result holds as the following theorem shows.

\begin{theorem}[approximation result]\label{thm:approximation_result_homogenization}
	Let $u^{\varepsilon,\text{hom}}$ and $\uapprox^{\varepsilon,\text{hom}}$ be given as the weak limits of $(u^{\varepsilon,\delta})$ and $(u^{\varepsilon,\delta}_\text{approx})$, respectively, for $\delta \to 0$. 
	Then there exists a $C > 0$ independently of $\varepsilon$ and $\delta$ such that
	\begin{align*}
		\norm[H^1(\Omega)]{u^{\varepsilon, \text{hom}} - u^{\varepsilon, \text{hom}}_\text{approx}} \leq C \varepsilon^2
	\end{align*}
\end{theorem}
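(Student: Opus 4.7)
The plan is to apply Theorem \ref{thm:approximation_result_u} at each scale $\delta>0$ with coefficients $\lambda^{\varepsilon,\delta},\mu^{\varepsilon,\delta}$, verify that the constant in its conclusion is uniform in $\delta$, and then transfer the resulting estimate to the weak limits using lower semicontinuity of the $H^1(\Omega)$-norm.

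Concretely, denote $R^{\varepsilon,\delta} := u^{\varepsilon,\delta} - u^{\varepsilon,\delta}_{\text{approx}} \in \calW_0$ and invoke Theorem \ref{thm:approximation_result_u} to obtain $\|R^{\varepsilon,\delta}\|_{H^1(\Omega)} \le C(\delta)\,\varepsilon^2$. Tracing through the chain \eqref{estRtF}--\eqref{estu_1}, $C(\delta)$ depends on $\|\lpert^\delta\|_{L^\infty(\Omega)}$, $\|\mpert^\delta\|_{L^\infty(\Omega)}$ and on $\|u_1^\delta\|_{H^1(\Omega)}$. Since $\lpert^\delta(\cdot) = \lpert(\cdot/\delta)$ is a pure rescaling, the $L^\infty$-norms are preserved, and the same holds for $\mpert^\delta$. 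For $u_1^\delta$, the a priori estimate \eqref{ellest} applied to its defining equation (with constant coefficients $\lambda_0,\mu_0$ and right-hand side built from $\lpert^\delta,\mpert^\delta$ and the explicit profile $u_0$ from \eqref{eq:u_0}) yields
\begin{align*}
\|u_1^\delta\|_{H^1(\Omega)} \le C_5 \bigl(\|\lpert\|_{L^\infty(\R^2)} + \|\mpert\|_{L^\infty(\R^2)}\bigr)\,\|u_0\|_{W^{1,\infty}(\Omega)},
\end{align*}
where $C_5$ depends only on $\mu_0$. Hence $C(\delta)\le C$ independently of $\delta$, and I obtain the uniform estimate $\|R^{\varepsilon,\delta}\|_{H^1(\Omega)} \le C\varepsilon^2$ for every $\delta>0$.

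It remains to let $\delta\to 0$. By Theorem \ref{thm:homogenization_convergence_result} one has $u^{\varepsilon,\delta} \rightharpoonup u^{\varepsilon,\text{hom}}$ in $\calW_l$, and the discussion surrounding \eqref{u1hseq} shows $u^{\varepsilon,\delta_n}_{\text{approx}} \rightharpoonup u^{\varepsilon,\text{hom}}_{\text{approx}}$ in $H^1(\Omega)$ along any sequence $\delta_n\to 0$. Linearity of weak convergence then gives $R^{\varepsilon,\delta_n} \rightharpoonup u^{\varepsilon,\text{hom}} - u^{\varepsilon,\text{hom}}_{\text{approx}}$ in $H^1(\Omega)$, and weak lower semicontinuity of the norm delivers
\begin{align*}
\bigl\|u^{\varepsilon,\text{hom}} - u^{\varepsilon,\text{hom}}_{\text{approx}}\bigr\|_{H^1(\Omega)} \le \liminf_{n\to\infty}\|R^{\varepsilon,\delta_n}\|_{H^1(\Omega)} \le C\varepsilon^2.
\end{align*}

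The main obstacle, as I see it, is the uniform-in-$\delta$ control of $C(\delta)$: one has to be sure that neither the coefficient norms nor $\|u_1^\delta\|_{H^1}$ blow up as the oscillations become faster. This is precisely where the scale invariance of the $L^\infty$-norm under $x\mapsto x/\delta$ and the form of the a priori estimate \eqref{ellest} (with explicit dependence on the $L^\infty$-norm of the coefficients) are essential; once that bookkeeping is in place, the passage to the limit is a soft functional-analytic argument.
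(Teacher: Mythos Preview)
Your proof is correct and follows essentially the same approach as the paper: obtain the estimate of Theorem~\ref{thm:approximation_result_u} uniformly in $\delta$ (the paper simply asserts this, whereas you spell out why the constant is $\delta$-independent), and then pass to the weak limit using lower semicontinuity of the $H^1$-norm.
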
 
\begin{proof}
	Due to theorem \ref{thm:approximation_result_u} there exists a constant $C > 0$ independent of $\delta$ such that
	\begin{align*}
		\norm[H^1(\Omega)]{u^{\varepsilon, \delta} - u^{\varepsilon, \delta}_\text{approx}} \leq C \varepsilon^2
	\end{align*}
	since the constant $C$ from theorem \ref{thm:approximation_result_u} can be bounded independently of $\delta$. Because the $H^1$-norm is lower semi-continuous with respect to weakly convergent sequences, the assertion of the theorem follows.
\end{proof}

From now on, a few simplifications are made, which we have also used in the ana\-lysis of the embedded cell method in section \ref{chap:ecm}.
\begin{itemize}
	\item The material is supposed to have constant shear modulus $\mu = \mu_0$.
	\item We consider a metal-ceramic composite material with first Lamé parameter
	\begin{align}
		\lambda^\varepsilon(x) = \lambdam + \varepsilon D_c \chi_{\Omega_c}(x)
		\label{eq:lambda_metal_ceramic}
	\end{align}
	where the constants $\lambdam = \lambda_0$ and $\lambdac>0$ are the first Lamé parameters of the metal and ceramics, respectively, $\Omega_c$ is the area occupied by the ceramic particles, and $\varepsilon D_c := \lambdac - \lambdam$.
	\item The sequence $\lpert^\delta$ is defined by periodic continuation of $D_c \chi_{\Omega_c}$.
\end{itemize}

Our next goal is to approximate the effective material parameter $\lambda^{\varepsilon,\text{hom}}$  up to an error of order $\varepsilon^2$. We obtain the following approximation result.

\begin{theorem}[approximation of the effective material parameter]\label{thm:approximation_result}
	Let $l \in \R$ and $(\lambda^{\varepsilon,\delta})_{\delta > 0}$ $\subset L^\infty(\Omega)$ be a family of material parameters defined as above.
Then there exists an $\varepsilon_0 > 0$ such that the effective material parameter defined according to theorem \ref{thm:homogenization_convergence_result} is given by
	\begin{align}
		\lambda^{\varepsilon,\text{hom}} = \lambdam + \varepsilon \snorm{\Omega_c} D_c + \calO(\varepsilon^2)
		\label{eq:approximation_hom_materialParam}
	\end{align}
	for all $\varepsilon \in (0, \varepsilon_0)$.
\end{theorem}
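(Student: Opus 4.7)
The plan is to identify $\lambda^{\varepsilon,\text{hom}}$ by reading it off from the tensile force, which in turn can be computed to order $\calO(\varepsilon^2)$ via Theorems \ref{thm:approximation_result_u} and \ref{thm:approximation_result_homogenization}. Since the homogenized problem has constant coefficients $\lambda^{\varepsilon,\text{hom}}$ and $\mu_0$, the solution $u^{\varepsilon,\text{hom}}\in\calW_l$ is affine of the same form as $u_0$ in \eqref{eq:u_0} with $\nu_0$ replaced by $\nu^\varepsilon:=\lambda^{\varepsilon,\text{hom}}/(\lambda^{\varepsilon,\text{hom}}+2\mu_0)$, and inserting this into \eqref{eq:tensile_force} yields the closed-form expression
\begin{align*}
F^{\varepsilon,\text{hom}} \;=\; F[\lambda^{\varepsilon,\text{hom}},\mu_0,l] \;=\; \dfrac{4\mu_0\bigl(\lambda^{\varepsilon,\text{hom}}+\mu_0\bigr)\,l}{\lambda^{\varepsilon,\text{hom}}+2\mu_0}.
\end{align*}
The map $\lambda\mapsto F[\lambda,\mu_0,l]$ is smooth with derivative $F'(\lambda_m)=4\mu_0^2 l/(\lambda_m+2\mu_0)^2\neq 0$, so by the inverse function theorem, for $\varepsilon_0>0$ small enough any determination of $F^{\varepsilon,\text{hom}}$ up to $\calO(\varepsilon^2)$ transfers to the same-order determination of $\lambda^{\varepsilon,\text{hom}}$.

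Next, I would compute $F^{\varepsilon,\text{hom}}$ from the defining side via \eqref{3eff3}, which gives $F^{\varepsilon,\text{hom}}=\lim_{\delta\to 0}\int_\Omega P^{\varepsilon,\delta}_{22}\,dx$. By Theorem \ref{thm:approximation_result_u}, $u^{\varepsilon,\delta}=u_0+\varepsilon u_1^\delta+R^{\varepsilon,\delta}$ with $\|R^{\varepsilon,\delta}\|_{H^1(\Omega)}\leq C\varepsilon^2$, where $C$ is independent of $\delta$ because the constants in that theorem depend only on $\mu^{\ast}$ and on $\|\lpert\|_{L^\infty}$, neither of which sees $\delta$. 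Expanding $P^{\varepsilon,\delta}_{22}=\lambda^{\varepsilon,\delta}\trace(\nabla^s u^{\varepsilon,\delta})+2\mu_0\,\partial_2 u^{\varepsilon,\delta}_2$ in powers of $\varepsilon$, integrating over $\Omega$ and controlling all cross terms in $L^1$ by Cauchy--Schwarz together with the uniform $H^1$-bound on $u_1^\delta$ provided by \eqref{estu_1}, I obtain
\begin{align*}
F[\lambda^{\varepsilon,\delta},\mu_0,l] = F_0 + \varepsilon\!\int_\Omega\!\bigl[\lpert^\delta\trace(\nabla^s u_0)+\lambda_m\trace(\nabla^s u_1^\delta)+2\mu_0\,\partial_2 (u_1^\delta)_2\bigr]dx + \calO(\varepsilon^2),
\end{align*}
with $F_0=4\mu_0(\lambda_m+\mu_0)l/(\lambda_m+2\mu_0)$ and the error uniform in $\delta$. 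Letting $\delta\to 0$, using $\lpert^\delta\rightharpoonup\snorm{\Omega_c}D_c$ in $L^2$, $u_1^\delta\rightharpoonup u_1^{\text{hom}}$ in $H^1$, and the constancy of $\trace(\nabla^s u_0)$, produces $F^{\varepsilon,\text{hom}} = F_0 + \varepsilon B + \calO(\varepsilon^2)$ with
\begin{align*}
B := (1-\nu_0)l\,\snorm{\Omega_c}D_c + \!\int_\Omega\!\bigl[\lambda_m\trace(\nabla^s u_1^{\text{hom}})+2\mu_0\,\partial_2 (u_1^{\text{hom}})_2\bigr]dx.
\end{align*}

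To finish, I would evaluate $B$ using \eqref{eq:u_1_hom}: since $\bar{\mu}=0$ in our constant-shear setting, one reads off $\partial_2 (u_1^{\text{hom}})_2=0$ and the constant value $\trace(\nabla^s u_1^{\text{hom}}) = -\snorm{\Omega_c}D_c(1-\nu_0)l/(\lambda_m+2\mu_0)$. A short cancellation then collapses $B$ to $(1-\nu_0)^2 l\,\snorm{\Omega_c}D_c = 4\mu_0^2 l\,\snorm{\Omega_c}D_c/(\lambda_m+2\mu_0)^2$. Matching this with the first-order Taylor expansion $F^{\varepsilon,\text{hom}} = F_0 + (\lambda^{\varepsilon,\text{hom}}-\lambda_m)F'(\lambda_m) + \calO\bigl((\lambda^{\varepsilon,\text{hom}}-\lambda_m)^2\bigr)$ from the first step yields $\lambda^{\varepsilon,\text{hom}}=\lambda_m+\varepsilon\snorm{\Omega_c}D_c+\calO(\varepsilon^2)$, which is \eqref{eq:approximation_hom_materialParam}.

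The main obstacle I anticipate is establishing the uniformity in $\delta$ of the $\calO(\varepsilon^2)$ remainder in the second step, since this is precisely what allows the $\delta\to 0$ limit to commute with the $\varepsilon$-expansion. This uniformity rests on the $\delta$-independence of $\|\lpert^\delta\|_{L^\infty}=\|\lpert\|_{L^\infty}$ together with the a priori bound \eqref{ellest} applied to the corrector equation for $u_1^\delta$; once this is granted, the remainder of the argument is explicit algebra on the affine pair $(u_0,u_1^{\text{hom}})$ computed in Section \ref{sec:apphom}.
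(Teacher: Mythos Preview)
Your proposal is correct and follows essentially the same route as the paper: expand $F[\lambda^{\varepsilon,\delta},\mu_0,l]$ to first order in $\varepsilon$ uniformly in $\delta$, pass to the limit $\delta\to 0$ using $\lpert^\delta\rightharpoonup\snorm{\Omega_c}D_c$ and $u_1^\delta\rightharpoonup u_1^{\text{hom}}$, and then invert the force--parameter relation to read off $\lambda^{\varepsilon,\text{hom}}$. The only cosmetic difference is that you invert the exact closed-form map $\lambda\mapsto 4\mu_0(\lambda+\mu_0)l/(\lambda+2\mu_0)$ via Taylor expansion, whereas the paper instead evaluates $F[\lambda^{\varepsilon,\text{hom}},\mu,l]$ by plugging in the approximation $u^{\varepsilon,\text{hom}}\approx u_0+\varepsilon u_1^{\text{hom}}$ from Theorem~\ref{thm:approximation_result_homogenization} and then equates the two expressions; the algebra and the required uniformity in $\delta$ are identical.
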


\begin{proof}
The strategy to obtain an approximation of the effective material parameter is to use the corresponding tensile force $F$. Using theorem \ref{thm:tensile_force} and  \eqref{eq:approx_homogenization} we obtain

\begin{align*}
F[\lambda^{\varepsilon,\delta}, \mu, l] &= \int_\Omega \lambda^{\varepsilon,\delta} \trace(\nabla^s u^{\varepsilon,\delta}) + 2\mu \partial_2 (u^{\varepsilon,\delta})_2 dx \\
&= \int_\Omega \lambdam \trace(\nabla^s u_0) + 2\mu \partial_2 (u_0)_2 dx \\
	&\qquad\qquad\qquad + \varepsilon\left[\int_\Omega \lpert^\delta \trace(\nabla^s u_0) + \lambdam \trace(\nabla^s u_1^\delta) + 2\mu \partial_2 (u_1^\delta)_2 dx\right] + \calO(\varepsilon^2) \\
	&=: F_0 + \varepsilon F^\delta_1 + \calO(\varepsilon^2).
\end{align*}

Because of \eqref{eq:u_0}, \eqref{u1hseq}, \eqref{lhom}, where in the present situation we have $\bar{\lambda} = \snorm{\Omega_c} D_c$, and \eqref{eq:u_1_hom} we get
\begin{align*}
F_0 = ((1-\nu_0) \lambdam + 2\mu) l 
\end{align*}
and
\begin{align*}
\lim_{\delta \rightarrow 0} F^\delta_1 &= \int_\Omega \bar{\lambda} \trace(\nabla^s u_0) + \lambdam \trace(\nabla^s u_1^\text{hom}) + 2\mu \partial_2 (u_1^\text{hom})_2 dx\\
	&= \snorm{\Omega_c} D_c (1-\nu_0)^2 l.
	\end{align*}
Hence, due to \eqref{eff1}, this implies 
\begin{align}
	F[\lambda^{\varepsilon,\text{hom}}, \mu, l]) &= ((1-\nu_0) \lambdam + 2\mu) l  + \varepsilon \snorm{\Omega_c} D_c (1-\nu_0)^2 l + \calO(\varepsilon^2).
	\label{eq:force_approximation_hom_1}
\end{align}
On the other hand, by using \eqref{eff2} we obtain
\begin{align}
	F[\lambda^{\varepsilon,\text{hom}}, \mu, l]) 
&= \int_\Omega \lambda^{\varepsilon,\text{hom}} \trace(\nabla^s u^{\varepsilon,\text{hom}}) + 2\mu \partial_2 (u^{\varepsilon,\text{hom}})_2 dx  \nonumber\\	
&= (1-\nu_0) l \lambda^{\varepsilon,\text{hom}} + 2\mu l  - \varepsilon \frac{\snorm{\Omega_c} D_c (1-\nu_0) \nu_0 l}{\lambdam}  \lambda^{\varepsilon,\text{hom}} + \calO(\varepsilon^2).
	\label{eq:force_approximation_hom_2}
\end{align}

Now, combining \eqref{eq:force_approximation_hom_1} and \eqref{eq:force_approximation_hom_2}
yields
\begin{align*}
	\lambda^{\varepsilon,\text{hom}} &=  \lambdam + \varepsilon \snorm{\Omega_c} D_c + \calO(\varepsilon^2)
\end{align*}
if $\varepsilon_0$ is chosen sufficiently small.
\end{proof}

\subsection{Correctness of the embedded cell method}
In this subsection, we prove a correctness result for the embedded cell method in two dimensions applied to metal-ceramic composite materials with constant shear modulus $\mu=\mu_0$ and slightly varying first Lamé parameter $\lambda$. More precisely, 
we assume as in the previous subsection that for  $0 < \varepsilon < \varepsilon_0$ with sufficiently small $\varepsilon_0$ the material parameter $\lambda$ in the metal phase $\Omega_m$ is given by $\lambdam=\lambda_0$ and in the ceramic phase $\Omega_c$ by
$\lambda^\varepsilon_{\text cer}=\lambdam + \varepsilon D_c$ with some $D_c \geq 0$.
We show that under these assumptions the iteration sequence $(\lepsdummy(n))_{n \in \N}$ defined by algorithm \ref{alg:emc_detail} is monotone, bounded and hence convergent and the limit 
$\lepsdummy$ satisfies
\begin{align}
	\lepsdummy = \lambdam + \varepsilon \snorm{\Omega_c} D_c + \calO(\varepsilon^2)
\end{align}
for all $\varepsilon \in (0, \varepsilon_0)$ such that 
\begin{align}
	\lambda^{\varepsilon,\text{hom}} - \lepsdummy = \calO(\varepsilon^2)
\end{align}
and consequently
\begin{align}
	F[\lambda^{\varepsilon,\text{hom}},\mu,l] - F[\lepsdummy,\mu,l] = \calO(\varepsilon^2)
\end{align}
for all $\varepsilon \in (0, \varepsilon_0)$.

The proof relies on the monotony of the tensile force with respect to the first Lamé parameter $\lambda$, see the subsequent lemma \ref{thm:monotonicity_force}, and a representation formula for $\lepsdummy(n)$, see the subsequent lemma \ref{lem:uniform_boundary_lambdaNM}.

\begin{lem}[monotony of the tensile force]\label{thm:monotonicity_force}
	Let $\lambda^j \in L^\infty(\Omega), j = 1,2$, be some functions for the first Lamé parameter such that there exists an $n \in \N$ such that
	\begin{align*}
		\lambda^j = \sum_{k=0}^n \varepsilon^k \lambda^j_k + \calO(\varepsilon^{n+1})
	\end{align*}
	with $\lambda^1_0 = \lambda^2_0 \equiv \lambda_0 \in \R$,
	\begin{align*}
		\lambda^2 - \lambda^1 = \varepsilon^n (\lambda_n^2 - \lambda^1_n) + \calO(\varepsilon^{n+1}),
	\end{align*}
	and
	\begin{align*}
		\int_\Omega \lambda_n^2 - \lambda_n^1 dx > 0.
	\end{align*}
	Then there exists an $\varepsilon_0 > 0$ such that
	\begin{align*}
		F[\lambda^1, \mu, l]  < F[\lambda^2, \mu, l]
	\end{align*}
	for all $l > 0$ and all $\varepsilon \in (0,\varepsilon_0)$.
\end{lem}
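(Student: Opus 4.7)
The plan is to derive an exact integral formula for $F[\lambda^2,\mu,l]-F[\lambda^1,\mu,l]$ via linear interpolation in the Lamé parameter, and then extract its leading $\varepsilon$-order.

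\textbf{Step 1: Energy representation of the tensile force.} I first establish that every weak solution $u \in \calW_l$ of \eqref{eq:DGL2D}--\eqref{eq:BC2D3} satisfies
\begin{align*}
  l \cdot F[\lambda,\mu,l] \;=\; \int_\Omega \lambda\,(\trace(\nabla^s u))^2 + 2\mu\,\nabla^s u : \nabla^s u\,dx.
\end{align*}
To this end I introduce $\widetilde{u}_0(x) := (0,\,l\,x_2)^T$, which lies in $\calW_l$ by definition \ref{def:admissible_spaces}, so that $u-\widetilde{u}_0 \in \calW_0$ is an admissible test function in \eqref{eq:weak_solution}. Using $\trace(\nabla^s \widetilde{u}_0) = l$ and $\nabla^s u : \nabla^s \widetilde{u}_0 = l\,\partial_2 u_2$ together with theorem \ref{thm:tensile_force} yields the identity.

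\textbf{Step 2: Interpolation and differentiation in $\lambda$.} For $\tau \in [0,1]$ set $\lambda^\tau := \lambda^1+\tau(\lambda^2-\lambda^1)$ and let $u^\tau \in \calW_l$ be the associated weak solution, which exists uniquely by theorem \ref{thm:existence_uniqueness}. A standard Lax--Milgram-with-parameter argument shows that $\tau \mapsto u^\tau$ is continuously differentiable into $H^1(\Omega)$; since the Dirichlet data for $u^\tau_2$ and the mean condition on $u^\tau_1$ are $\tau$-independent, $\partial_\tau u^\tau$ lies in $\calW_0$. Setting $\Phi(\tau) := F[\lambda^\tau,\mu,l]$ and differentiating the energy identity from Step 1 gives
\begin{align*}
  l\,\Phi'(\tau) \;=\; \int_\Omega (\lambda^2-\lambda^1)(\trace\nabla^s u^\tau)^2\,dx \;+\; 2\int_\Omega \lambda^\tau\trace(\nabla^s u^\tau)\trace(\nabla^s \partial_\tau u^\tau) + 2\mu\,\nabla^s u^\tau:\nabla^s\partial_\tau u^\tau\,dx,
\end{align*}
where the second integral vanishes by using $v = \partial_\tau u^\tau \in \calW_0$ in the weak equation for $u^\tau$. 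Integrating from $0$ to $1$ yields the exact representation
\begin{align*}
  l\bigl(F[\lambda^2,\mu,l]-F[\lambda^1,\mu,l]\bigr) \;=\; \int_0^1\int_\Omega (\lambda^2-\lambda^1)(\trace\nabla^s u^\tau)^2\,dx\,d\tau.
\end{align*}

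\textbf{Step 3: Leading-order expansion and conclusion.} Because $\|\lambda^\tau - \lambda_0\|_{L^\infty(\Omega)} = O(\varepsilon)$ uniformly in $\tau$, theorem \ref{thm:approximation_result_u} applied to $\lambda^\tau$ gives $\|u^\tau - u_0\|_{H^1(\Omega)} = O(\varepsilon)$ uniformly in $\tau \in [0,1]$, where $u_0$ is given by \eqref{eq:u_0}. In particular $\trace\nabla^s u^\tau - (1-\nu_0)l = O(\varepsilon)$ in $L^2(\Omega)$ and hence $(\trace\nabla^s u^\tau)^2 - ((1-\nu_0)l)^2 = O(\varepsilon)$ in $L^1(\Omega)$, uniformly in $\tau$. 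Substituting the hypothesis $\lambda^2-\lambda^1 = \varepsilon^n(\lambda_n^2-\lambda_n^1) + O(\varepsilon^{n+1})$ in $L^\infty(\Omega)$ into the exact representation and using H\"older to absorb the cross-remainders, I obtain
\begin{align*}
  l\bigl(F[\lambda^2,\mu,l]-F[\lambda^1,\mu,l]\bigr) \;=\; \varepsilon^n\,\bigl((1-\nu_0)l\bigr)^2\int_\Omega (\lambda_n^2-\lambda_n^1)\,dx \;+\; O(\varepsilon^{n+1}).
\end{align*}
Since $\mu_0 > 0$ forces $1-\nu_0 > 0$ and the integral is strictly positive by hypothesis, the leading coefficient is strictly positive, and the inequality follows for all sufficiently small $\varepsilon$.

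\textbf{Main obstacle.} The only delicate point is the $\tau$-uniformity in Step 3: the approximation constant from theorem \ref{thm:approximation_result_u} must be controlled independently of $\tau$, which works because $\lpert^\tau := (\lambda^\tau-\lambda_0)/\varepsilon$ is a convex combination of $\lpert^1$ and $\lpert^2$ modulo a uniformly $O(\varepsilon)$ term, so $\|\lpert^\tau\|_{L^\infty}$ is bounded uniformly in $\tau$ and in small $\varepsilon$. Once this uniformity is in hand, the rest of the argument is a transparent energy computation.
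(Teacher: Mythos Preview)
Your proof is correct and arrives at the same leading-order formula
\[
F[\lambda^2,\mu,l]-F[\lambda^1,\mu,l]=\varepsilon^n(1-\nu_0)^2\,l\int_\Omega(\lambda_n^2-\lambda_n^1)\,dx+\calO(\varepsilon^{n+1}),
\]
but the route is genuinely different from the paper's. The paper expands each solution $u^j$ to order $n$ via the generalized approximation theorem~\ref{thm:generalized_approximation_u}, observes inductively that $u_k^1=u_k^2$ for $k\le n-1$, and then evaluates the $\varepsilon^n$-coefficient of $F^2-F^1$ by inserting the explicit test function $v=(x_1,0)^T$ and invoking lemma~\ref{lem:vanishing_mean_in_W0} to compute $\int_\Omega\trace(\nabla^s(u_n^2-u_n^1))\,dx$. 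Your argument replaces this machinery by the energy identity $l\,F[\lambda,\mu,l]=\int_\Omega\lambda(\trace\nabla^s u)^2+2\mu|\nabla^s u|^2\,dx$ (obtained from the weak formulation with $v=u-(0,lx_2)^T$) together with a Hadamard-type variational formula along the segment $\lambda^\tau$; the second-variation term vanishes exactly by the equation, leaving the exact expression $l(F^2-F^1)=\int_0^1\!\int_\Omega(\lambda^2-\lambda^1)(\trace\nabla^s u^\tau)^2\,dx\,d\tau$. This buys you two things: you only need the \emph{first}-order approximation $u^\tau=u_0+\calO(\varepsilon)$ rather than the full $n$-term expansion, and you never need lemma~\ref{lem:vanishing_mean_in_W0} or the special test function. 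The paper's approach, in turn, stays within the perturbation framework already set up for the rest of section~\ref{chap:perturbation_theory} and avoids the (admittedly routine) differentiability-in-$\tau$ argument. A minor point worth making explicit in your write-up: the $\calO(\varepsilon^{n+1})$ remainder scales like $l$ (since $u^\tau$ is linear in $l$), so the choice of $\varepsilon_0$ is indeed uniform in $l>0$, as required by the statement.
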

\begin{proof}
According to the generalized approximation theorem \ref{thm:generalized_approximation_u} the solutions $u^j \in \calW_l$, $j = 1,2$, of \eqref{eq:weak_solution} with the first Lamé parameter $\lambda^j$ are given by
\begin{align*}
	u^j = \sum_{k = 0}^n \varepsilon^k u_k^j + \calO(\varepsilon^{n+1})
\end{align*}
with
\begin{align}
	\int_0 \lambda_0 \trace(\nabla^s u_k^j) \trace(\nabla^s v) + 2\mu \nabla^s u_k^j : \nabla^s v dx = - \sum_{i = 0}^{k-1} \int_\Omega \lambda^j_{k-i} \trace(\nabla^s u_i^j) \trace(\nabla^s v) dx
	\label{eq:PDE_for_u_k}
\end{align}
for all $v \in \calW_0$ and $k = 0, \dots, n$.
Because of $\lambda^1_k = \lambda^2_k$ for all $k = 0,\dots, n-1$
we obtain inductively that $u^1_k = u^2_k$ for all $k = 0,\dots, n-1$ since the right-hand-side of \eqref{eq:PDE_for_u_k} is independent of $j$.

Therefore, the corresponding tensile forces have the representation
\begin{align*}
	F^j = F[\lambda^j, \mu, l] = \sum_{k=0}^n \varepsilon^k F_k^j + \calO(\varepsilon^{n+1}) + 2\mu l
\end{align*}
with
\begin{align*}
	F^j_k = \sum_{i = 0}^k \int_\Omega \lambda_{k-i} \trace(\nabla^s u_i^j) dx,
\end{align*}
which follows from a direct calculation.
Since $\lambda^j_k, u^j_k$ is not depending on $j$ for $k = 0, \dots, n-1$, this yields
\begin{align}
	F^2 - F^1 &= \varepsilon^n \left(F^2_n - F^1_n\right) + \calO(\varepsilon^{n+1}) \nonumber\\
	&= \varepsilon^n \left(\int_\Omega \lambda_0 \trace(\nabla^s(u^2_n - u^1_n)) dx + \int_\Omega (\lambda^2_n - \lambda^1_n) \trace(\nabla^s u_0) dx\right) + \calO(\varepsilon^{n+1}) \nonumber\\
	&= \varepsilon^n \left(\int_\Omega \lambda_0 \trace(\nabla^s(u^2_n - u^1_n)) dx + (1-\nu_0) l \int_\Omega (\lambda^2_n - \lambda^1_n) dx\right) + \calO(\varepsilon^{n+1}).
	\label{eq:Force_difference}
\end{align}
Moreover, $w := u_n^2 - u_n^1 \in \calW_0$ satisfies
\begin{align*}
	\int_\Omega \lambda_0 \trace(\nabla^s w) \trace(\nabla^s v) + 2\mu \nabla^s w : \nabla^s v dx = - \int_\Omega (\lambda^2_n - \lambda_n^1) \trace(\nabla^s u_0) \trace(\nabla^s v) dx
\end{align*}
for all $v \in \calW_0$.
Hence, choosing $v = (x_1,0)^T \in \calW_0$ and using the subsequent lemma \ref{lem:vanishing_mean_in_W0} we get
\begin{align*}
	(\lambda_0 + 2\mu) \int_\Omega \trace(\nabla^s w) dx = - (1-\nu_0) l \int_\Omega \lambda_n^2 - \lambda_n^1 dx.
\end{align*}
Furthermore, inserting this into \eqref{eq:Force_difference} yields
\begin{align*}
	F^2 - F^1 &= \varepsilon^n \left((1- \nu_0) l \int_\Omega \lambda_n^2 - \lambda_n^1 dx - \dfrac{\lambda_0}{\lambda_0 + 2\mu} (1- \nu_0) l \int_\Omega \lambda_n^2 - \lambda_n^1 dx\right) + \calO(\varepsilon^{n+1}) \\
	&= \varepsilon^n (1-\nu_0)^2 l \int_\Omega \lambda_n^2 - \lambda_n^1 dx + \calO(\varepsilon^{n+1}).
\end{align*}
Therefore, since 
$$\int_\Omega \lambda_n^2 - \lambda_n^1 dx > 0$$ 
and $l > 0$ according to the assertion, there exists an $\varepsilon_0 > 0$ such that $F^2 - F^1 > 0$ for all $\varepsilon \in (0,\varepsilon_0)$.
\end{proof}

It remains to prove the lemma mentioned in the previous proof.

\begin{lem}\label{lem:vanishing_mean_in_W0}
	Let $u \in \calW_0$ be arbitrarily chosen. Then it holds that
	\begin{align} \label{eq:vanishing_mean}
		\int_\Omega \partial_2 u_2 dx = 0
	\end{align}
\end{lem}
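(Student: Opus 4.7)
The plan is to reduce the claim to a boundary evaluation of $u_2$ and then invoke the Dirichlet boundary condition encoded in the definition of $\calW_0$. Recall that for $d=2$ every $u \in \calW_0$ has $u_2 \in H^1(\Omega)$ with trace $u_2|_{\Gamma_1} = u_0[0] = 0$, where $\Gamma_1$ consists of the top and bottom edges $\{x_2 = 0\} \cup \{x_2 = 1\}$ of the unit square $\Omega = (0,1)^2$.

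First I would apply the divergence theorem (or equivalently Fubini's theorem) to the $H^1$-function $u_2$. Since $\partial_2 u_2 \in L^2(\Omega) \subset L^1(\Omega)$, we may write
\begin{align*}
  \int_\Omega \partial_2 u_2 \, dx \;=\; \int_{\partial \Omega} u_2 \, n_2 \, do,
\end{align*}
where $n_2$ denotes the second component of the outer unit normal $n$ on $\partial\Omega$, and the boundary integral is understood in the sense of traces.

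Next I would decompose $\partial\Omega = \Gamma_1 \cup \Gamma_2$. On the lateral part $\Gamma_2$ (the edges $\{x_1 = 0\}$ and $\{x_1 = 1\}$) the outer normal equals $(\pm 1, 0)^T$, hence $n_2 = 0$ and the contribution is zero. On $\Gamma_1$ the outer normal equals $(0,\pm 1)^T$, so $n_2 = \pm 1$, but the Dirichlet condition $u_2|_{\Gamma_1} = 0$ forces the integrand to vanish almost everywhere in the trace sense. Adding the two contributions yields \eqref{eq:vanishing_mean}.

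There is no real obstacle here: the statement is a direct consequence of the trace formula and the zero boundary condition baked into $\calW_0$. The only thing one has to be slightly careful about is justifying the divergence theorem for $H^1$-functions on a Lipschitz domain, but this is standard (and is even a special case of the generalized Stokes formula \eqref{eq:generalized_stokes} that was already used in the proof of Theorem \ref{thm:tensile_force}, applied with $v = e_2$ and $w = u_2$). Alternatively, one could give a one-line Fubini-style argument: $\int_0^1 \partial_2 u_2(x_1,\cdot) \, dx_2 = u_2(x_1,1) - u_2(x_1,0) = 0$ for a.e. $x_1 \in (0,1)$ by the fundamental theorem of calculus for absolutely continuous representatives, followed by integration over $x_1$.
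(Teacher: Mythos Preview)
Your proof is correct. Both you and the paper reduce the integral to a boundary evaluation where the Dirichlet condition on $\Gamma_1$ and the vanishing normal component on $\Gamma_2$ kill everything; the difference is purely technical. The paper argues by density: it shows the functional $f[u]=\int_\Omega \partial_2 u\,dx$ is bounded on $H^1(\Omega)$, verifies $f\equiv 0$ on the dense subset $C^\infty_{\Gamma_1}(\bar\Omega)$ via Fubini and the fundamental theorem of calculus, and then extends to $H^1_{\Gamma_1}(\Omega)$ by continuity. You instead invoke the divergence theorem (equivalently, the generalized Stokes formula \eqref{eq:generalized_stokes} already available in the paper) directly for $H^1$-functions, bypassing the explicit density step. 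Your route is slightly more economical since the trace machinery has already been set up; the paper's route is more self-contained. Your alternative Fubini-style argument is essentially the paper's proof, stated more compactly.
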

\begin{proof}
First, note that $f: H^1(\Omega, \R) \rightarrow \R$ defined by
\begin{align*}
	f[u] := \int_\Omega \partial_2 u dx
\end{align*}
is bounded in $H^1(\Omega)$.
Hence, to prove the assertion it is sufficient to prove that $f \equiv 0$ on the dense subset     $C^\infty_{\Gamma_1}(\bar{\Omega})$ of $H^1_{\Gamma_1}(\Omega)$ since $H^1_{\Gamma_1}(\Omega)$ is closed in $H^1(\Omega)$.
Now, for arbitrary $u \in C^\infty_{\Gamma_1}(\bar{\Omega})$ we have 
\begin{align*}
	f[u] = \int_\Omega \partial_2 u dx = \int_0^1 \int_0^1 \partial_2 u dx_2 dx_1 = \int_0^1 u(x_1, 1) - u(x_1, 0) dx_1 = 0,
\end{align*}
which proves the assertion of the lemma because $u \in \calW_0$ implies $u_2 \in H^1_{\Gamma_1}(\Omega)$.
\end{proof}

\begin{lem}\label{lem:uniform_boundary_lambdaNM}
	Suppose that $\lepsdummy(1) \neq \lepsdummy(0)$.
	Then there exist $\varepsilon_0 > 0$ and $m \in \N$, $m \geq 2$, such that
	\begin{align}
		\lepsdummy(n) = \lambdam + \varepsilon \snorm{\Omega_c} D_c + \varepsilon^m \lambda_{m,n} + \varepsilon^{m+1} R_{\varepsilon,n}
		\label{eq:representation_lepsdummy}
	\end{align}
	for all $\varepsilon \in (0,\varepsilon_0)$, where 
\begin{align}
\label{neq}
\lambda_{m,n+1} \neq \lambda_{m,n}
\end{align}
for all $n \in \N$ and
$\lambda_{m,n},\, R_{\varepsilon,n} = \calO(1)$ uniformly for all $n \in \N$ and all $\varepsilon \in (0,\varepsilon_0)$.
\end{lem}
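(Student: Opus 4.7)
The plan is to argue by induction on $n$, combining the perturbative expansion provided by Theorem~\ref{thm:approximation_result_u} (iterated to arbitrary order in $\varepsilon$) with the explicit leading-order identity established at the end of the proof of Lemma~\ref{thm:monotonicity_force}. First, from \eqref{ld0} one has $\lepsdummy(0) = \lambdam + \varepsilon \snorm{\Omega_c} D_c$ exactly. For $\lepsdummy(1)$, applying Theorem~\ref{thm:approximation_result_u} to the step-$0$ problem (with $\lambda^0$ as in \eqref{lambda_ECM}) yields an $\varepsilon$-expansion of $u^0$, then of $F^1$ via Theorem~\ref{thm:tensile_force}, and finally of $\lepsdummy(1) = \lambda^\text{equiv}(1)$ via \eqref{lequn}, which is a smooth rational function of $F^1$ near $F^1 = 2\mu l$. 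A direct computation using the volume-fraction identity \eqref{eq:volume_fraction_ecm} gives $\int_\Omega \bigl(\chi_{\widetilde{\Omega}_c} D_c + \chi_{\widetilde{\Omega}_\text{dummy}} \snorm{\Omega_c} D_c\bigr)\,dx = \snorm{\Omega_c} D_c$, so the coefficients of $\varepsilon^0$ and $\varepsilon^1$ in $\lepsdummy(1)$ are precisely $\lambdam$ and $\snorm{\Omega_c} D_c$. Under the hypothesis $\lepsdummy(1)\neq\lepsdummy(0)$, at least one higher coefficient in the $\varepsilon$-expansion must be nonzero; taking $m \geq 2$ to be the smallest such index defines $m$ and settles the base case with $\lambda_{m,1}\neq 0 = \lambda_{m,0}$.

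For the inductive step, assume \eqref{eq:representation_lepsdummy} holds at stage $n$. Then $\lambda^n$ reads
\begin{align*}
\lambda^n \;=\; \lambdam + \varepsilon\bigl(\chi_{\widetilde{\Omega}_c} D_c + \chi_{\widetilde{\Omega}_\text{dummy}} \snorm{\Omega_c} D_c\bigr) + \varepsilon^m \chi_{\widetilde{\Omega}_\text{dummy}} \lambda_{m,n} + \calO(\varepsilon^{m+1}),
\end{align*}
with no contributions at orders $2,\dots,m-1$. Expanding $u^n$ to order $m+1$ via the higher-order analogue of Theorem~\ref{thm:approximation_result_u} and substituting into the tensile-force formula of Theorem~\ref{thm:tensile_force} gives $F^{n+1} = F_0 + \varepsilon F_1 + \varepsilon^m F_{m,n+1} + \calO(\varepsilon^{m+1})$, where $F_0, F_1$ are independent of $n$ (since the $\varepsilon^0$- and $\varepsilon^1$-coefficients of $\lambda^n$ are) and $F_{m,n+1}$ depends affinely on $\lambda_{m,n}$. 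Inserting this into \eqref{lequn} and Taylor-expanding yields the representation \eqref{eq:representation_lepsdummy} for $\lepsdummy(n+1)$ with new coefficient $\lambda_{m,n+1}$ and remainder $R_{\varepsilon,n+1}$.

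The strict inequality \eqref{neq} follows from the explicit leading-order identity derived inside the proof of Lemma~\ref{thm:monotonicity_force}. Applied with $\lambda^2 = \lambda^n$, $\lambda^1 = \lambda^{n-1}$, and using $\lambda^n - \lambda^{n-1} = \varepsilon^m \chi_{\widetilde{\Omega}_\text{dummy}}(\lambda_{m,n} - \lambda_{m,n-1}) + \calO(\varepsilon^{m+1})$, that identity yields
\begin{align*}
F^{n+1} - F^n \;=\; \varepsilon^m (1-\nu_0)^2 l \snorm{\widetilde{\Omega}_\text{dummy}}(\lambda_{m,n} - \lambda_{m,n-1}) + \calO(\varepsilon^{m+1}).
\end{align*}
Inverting \eqref{lequn} to leading order gives $\lambda_{m,n+1} - \lambda_{m,n} = c_\star (\lambda_{m,n} - \lambda_{m,n-1}) + \calO(\varepsilon)$ with an explicit nonzero constant $c_\star$ depending only on $\lambda_0$, $\mu$, $l$ and $\snorm{\widetilde{\Omega}_\text{dummy}}$. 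Hence $\lambda_{m,n}\neq\lambda_{m,n-1}$ implies $\lambda_{m,n+1}\neq\lambda_{m,n}$ for all sufficiently small $\varepsilon$, and together with the base case $\lambda_{m,1}\neq\lambda_{m,0}$ this closes \eqref{neq} by induction.

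The hard part is obtaining the uniform $\calO(1)$ control on $\lambda_{m,n}$ and $R_{\varepsilon,n}$ as $n$ varies, since a plain step-by-step induction would allow these constants to grow. The plan is to read the leading-order recurrence above as an affine iteration $\lambda_{m,n+1} = c_\star \lambda_{m,n} + d_\star$ up to an $\calO(\varepsilon)$ perturbation, and to verify from the explicit form of $c_\star$ (together with the structure of \eqref{lequn} and the geometric factor $\snorm{\widetilde{\Omega}_\text{dummy}}$) that $|c_\star| < 1$. Once this contraction is in place, a Banach fixed-point type argument confines the $\lambda_{m,n}$ to a bounded interval independent of $n$; then $\|\lambda^n\|_{L^\infty(\Omega)}$ is uniformly bounded in $n$, and the a priori estimate \eqref{ellest} from Theorem~\ref{thm:existence_uniqueness}, whose constant depends only on $\mu^\ast$, applied to the equations governing the order-by-order corrections provides the required uniform bound on $R_{\varepsilon,n}$ and closes the induction.
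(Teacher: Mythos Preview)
Your derivation of the representation \eqref{eq:representation_lepsdummy} and of the inequality \eqref{neq} tracks the paper closely: the paper also proceeds by first showing $\lepsdummy(n) = \lambdam + \varepsilon\snorm{\Omega_c}D_c + \calO(\varepsilon^2)$ inductively (Lemma~\ref{thm:approximation_lambda_dummy}), then isolating the first nonvanishing higher coefficient at $n=1$ via a power series in $\varepsilon$ (Lemma~\ref{lem:induction_part1}), and finally propagating both the expansion and the $\neq$ through the iteration using the leading-order force identity from the proof of Lemma~\ref{thm:monotonicity_force} (Lemma~\ref{lem:induction_part2}). For the uniform bound you take a different route from the paper, and the idea is a good one: if you carry the computation through, the factor $(1-\nu_0)^2 l$ from the force identity cancels against the derivative of the map \eqref{lequn} at $F_0$, giving $c_\star = \snorm{\widetilde{\Omega}_\text{dummy}} < 1$. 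The paper instead runs an invariant-ball argument for an auxiliary map $\Phi$ acting on $\lambda_R^n := \lambda_{m,n} + \varepsilon R_{\varepsilon,n}$, which is less transparent and relies on an additional smallness assumption on $l$ removed afterwards via Lemma~\ref{lem:lambda_equiv_independent_of_l}.

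There is, however, a genuine gap in your last paragraph. Boundedness of $\lambda_{m,n}$ alone does \emph{not} give $\|\lambda^n\|_{L^\infty(\Omega)}$ uniformly bounded in $n$, because $\lambda^n$ contains $\lepsdummy(n)$ and hence the contribution $\varepsilon^{m+1} R_{\varepsilon,n}$; your subsequent bound on $R_{\varepsilon,n}$ via \eqref{ellest} presupposes exactly that uniform $L^\infty$ control, so the argument is circular. This is precisely why the paper treats $\lambda_R^n = \lambda_{m,n} + \varepsilon R_{\varepsilon,n}$ as a single object throughout its second step. Your contraction idea can be salvaged, but it must be applied to $\lambda_R^n$ rather than to $\lambda_{m,n}$ alone: the map $\lambda_R^n \mapsto \lambda_R^{n+1}$ induced by one step of the algorithm has Lipschitz constant $\snorm{\widetilde{\Omega}_\text{dummy}} + \calO(\varepsilon)$ on any fixed bounded set (by the same computation that produced $c_\star$), so for $\varepsilon$ small it is a contraction mapping a fixed ball into itself, which bounds $\lambda_{m,n}$ and $R_{\varepsilon,n}$ simultaneously and yields an $\varepsilon_0$ independent of $n$.
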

\begin{proof}
The proof consists of two steps. The first step is to show by complete induction using the iteration procedure from algorithm \ref{alg:emc_detail} that $\lepsdummy(n)$ possesses a re\-presentation of the form \eqref{eq:representation_lepsdummy} which satisfies \eqref{neq}, and the second one is to prove that $\lambda_{m,n}, R_{\varepsilon,n} = \calO(1)$ uniformly for all $n \in \N$ and all $\varepsilon \in (0,\varepsilon_0)$.
We split the first step into the formulation and the proof of the subsequent lemmas \ref{thm:approximation_lambda_dummy}-\ref{lem:induction_part2}.

\begin{lem}\label{thm:approximation_lambda_dummy}
For all $n \in \N$ there exists an $\varepsilon_0 > 0$ such that
	\begin{align}
		\lepsdummy(n) = \lambdam + \varepsilon \snorm{\Omega_c} D_c + \calO(\varepsilon^2)
		\label{eq:approximation_dummy_parameter}
	\end{align}
	for all $\varepsilon \in (0,\varepsilon_0)$.
\end{lem}
\begin{proof}
The assertion is proven inductively.
For $n = 0$ we use \eqref{ld0} to obtain
\begin{align*}
	\lepsdummy(0) &= \snorm{\Omega_c} \lambdac^\varepsilon + \snorm{\Omega_m} \lambdam = \lambdam + \varepsilon \snorm{\Omega_c} D_c,
\end{align*}
such that \eqref{eq:approximation_dummy_parameter} is satisfied for $n = 0$.

Now, let \eqref{eq:approximation_dummy_parameter} be valid for some $n \in \N$.
Let $u^n \in \calW_l$ be the solution of \eqref{eq:weak_form_ECM} with the first Lamé parameter $\lambda_n^\varepsilon$ given by
\begin{align*}
	\lambda_n^\varepsilon =  \chi_{\widetilde{\Omega}_c} \lambdac^\varepsilon + \chi_{\widetilde{\Omega}_m} \lambdam + \chi_{\widetilde{\Omega}_\text{dummy}} \lepsdummy(n) =: \lambdam + \varepsilon \lambda_{n,\,\text{pert}}.
\end{align*}
Then, according to theorem \ref{thm:approximation_result_u}, the solution $u^n$ can be approximated by
\begin{align*}
	u^n = u_0 + \varepsilon u_1^n + \calO(\varepsilon^2)
\end{align*}
with $u_0 \in \calW_l$ given by \eqref{eq:u_0} and $u_1^n \in \calW_0$ given by the solution of \eqref{eq:equation_for_u1}.
Hence, as in the preceding subsection, an approximation for the corresponding tensile force is obtained by
\begin{align*}
	F[\lambda_n^\varepsilon, \mu, l] &= \int_\Omega \lambda^\varepsilon_n \trace(\nabla^s u^n) + 2 \mu \partial_2 (u^n)_2 dx \\
	&= \int_\Omega \lambdam \trace(\nabla^s u_0) + 2\mu \partial_2 (u_0)_2 dx \\
	&\qquad\qquad\qquad+ \varepsilon \left[\int_\Omega \lambda_{n,\,\text{pert}} \trace(\nabla^s u_0) + \lambdam \trace(\nabla^s u_1^n) + 2\mu \partial_2 (u_1^n)_2 dx\right] + \calO(\varepsilon^2) \\
	&=: F_0 + \varepsilon F_1^n + \calO(\varepsilon^2).
\end{align*}
We have
\begin{align}
F_0 = ((1-\nu_0) \lambda_0 + 2\mu) l . \label{F0lem}
\end{align}

In order to compute $F_1^n$, we insert $v = (x_1,0)^T \in \calW_0$ into \eqref{eq:equation_for_u1} and use lemma \ref{lem:vanishing_mean_in_W0} and $\mpert = 0$ to get
\begin{align*}
	\int_\Omega \lambdam \trace(\nabla^s u_1^n) + 2\mu \partial_1 (u_1^n)_1 dx &= -\int_\Omega \lambda_{n,\,\text{pert}} \trace(\nabla^s u_0) dx, 
\end{align*}	
which yields	
\begin{align*}	
\int_\Omega \partial_1 (u_1^n)_1 dx &= -\dfrac{(1-\nu_0) l}{\lambdam + 2\mu} \int_{\Omega} \lambda_{n,\,\text{pert}} dx.
\end{align*}
Therefore, we obtain 
\begin{align}
	F_1^n &=  (1-\nu_0)^2 l \int_\Omega \lambda_{n,\,\text{pert}} dx \nonumber \\[2mm]
	&= (1-\nu_0)^2 l D_c \left (\snorm{\widetilde{\Omega}_c}  + \snorm{\widetilde{\Omega}_\text{dummy}} \snorm{\Omega_c}  + \calO(\varepsilon)\right ) \nonumber \\[2mm]
	&= (1-\nu_0)^2 l D_c \snorm{\Omega_c}  + \calO(\varepsilon) \label{F1lem},
\end{align}
where we used the induction hypothesis and \eqref{eq:volume_fraction_ecm}. 

With the help of the above expansion for $F[\lambda_n^\varepsilon,\mu,l]$ 
we obtain
\begin{align}
	\lepsdummy(n+1) &= (F_0 + \varepsilon F_1^n -2 \mu l) \left(2 l - \dfrac{F_0 + \varepsilon F_1^n}{2\mu} \right)^{-1} + \calO(\varepsilon^2) \nonumber \\
&= 	(F_0 -2 \mu l) \left(2 l - \dfrac{F_0}{2\mu} \right)^{-1} + \varepsilon F_1^n l \left(2 l - \dfrac{F_0}{2\mu}\right)^{-2} \left(1 - \varepsilon \left(2 l - \dfrac{F_0}{2\mu}\right)^{-1} \dfrac{F_1^n}{2\mu}\right)^{-1} \nonumber \\
&\quad  + \calO(\varepsilon^2) \nonumber \\
&= 	\lambda_0 + \varepsilon F_1^n l \left(2 l - \dfrac{F_0}{2\mu}\right)^{-2} 
\sum_{k=0}^{\infty} \left( \varepsilon \left(2 l - \dfrac{F_0}{2\mu}\right)^{-1} \dfrac{F_1^n}{2\mu}\right)^{k}  + \calO(\varepsilon^2) \label{Fdecomp}
\end{align}
if $\varepsilon_0$ is chosen so small that 
\begin{align*}
\left|  \varepsilon \left(2 l - \dfrac{F_0}{2\mu}\right)^{-1} \dfrac{F_1^n}{2\mu}\right| &< 1.
\end{align*}
This implies
\begin{align}
\lepsdummy(n+1) &= \lambda_0 + \varepsilon F_1^n l \left(2 l - \dfrac{F_0}{2\mu}\right)^{-2} + \calO(\varepsilon^2).
\end{align}
Now, using \eqref{F0lem} and \eqref{F1lem} we get
\begin{align}
	\lepsdummy(n+1)	
	&= \lambdam + \varepsilon D_c \snorm{\Omega_c} + \calO(\varepsilon^2),
\end{align}
which yields the assertion of the lemma by induction.
\end{proof}

\begin{lem}\label{lem:induction_part1}
	Suppose that $\lepsdummy(1) \neq \lepsdummy(0)$. Then there exist $m \in \N$, $m \geq 2$, and $\varepsilon_0 > 0$ such that
	\begin{align*}
		\lepsdummy(1) = \lambdam + \varepsilon \snorm{\Omega_c} D_c +\varepsilon^m \lambda_m^1 + \calO(\varepsilon^{m+1})
	\end{align*}
	$\lambda^1_m \neq 0$ for all $\varepsilon \in (0,\varepsilon_0)$, where $\lambda^1_m \neq 0$.
\end{lem}
\begin{proof}
According to algorithm \ref{alg:emc_detail}, the initial value of the embedded cell algorithm is given by $\lepsdummy(0) = \lambdam + \varepsilon \snorm{\Omega_c} D_c$.
Then, according to theorem \ref{thm:generalization_power_series} the solution $u$ of \eqref{eq:weak_form_ECM} with the first Lamé parameter
\begin{align*}
	\lambda = \lambdam + \chi_{\widetilde{\Omega}_c} \varepsilon D_c + \chi_{\widetilde{\Omega}_\text{dummy}} \varepsilon \snorm{\Omega_c} D_c = \lambdam + \varepsilon \lpert
\end{align*}
is given by
\begin{align*}
	u = \sum_{k = 0}^\infty \varepsilon^k u_k,
\end{align*}
which is absolutely convergent for $\varepsilon < \varepsilon_0 \ll 1$ in $H^1(\Omega)$ and the $u_k$ satisfy the bound
\begin{align*}
	\norm[H^1(\Omega)]{u_k} = C^k \norm[L^\infty(\Omega)]{\lpert}^k \norm[H^1(\Omega)]{u_0}, \qquad C > 0.
\end{align*}
Hence, the tensile force has the representation
\begin{align*}
	F = \sum_{k = 0}^\infty \varepsilon^k F_k, \quad F_k = \int_\Omega \lpert \trace(\nabla^s u_{k-1}) + \lambdam \trace(\nabla^s u_k) dx.
\end{align*}
This yields
\begin{align*}
	\snorm{F_k} &\leq \snorm{\supp(\lpert)} \norm[L^\infty(\Omega)]{\lpert} \norm[H^1(\Omega)]{u_{k-1}} + \lambda_0 \norm[H^1(\Omega)]{u_k} \\
	&\leq \norm[L^\infty(\Omega)]{\lpert} \left(\snorm{\supp(\lpert)} + C \lambda_0\right) \norm[H^1(\Omega)]{u_{k-1}} 
\end{align*}
and hence, we have
\begin{align*}
	\sum_{k=0}^\infty \varepsilon^k \snorm{F_k} \leq \snorm{F_0} +  \norm[L^\infty(\Omega)]{\lpert} \left(\snorm{\supp(\lpert)} + C \lambda_0\right) \sum_{k=0}^\infty \norm[H^1(\Omega)]{\varepsilon^{k+1} u_k}.
\end{align*}
Therefore, the series is absolutely convergent for $\varepsilon \in (0, \varepsilon_0)$.

Now, proceeding analogously as in the derivation of \eqref{Fdecomp}, we obtain
\begin{align*}
	\lepsdummy(1) = \lambdam + \varepsilon \snorm{\Omega_c} D_c + \lambda_R
\end{align*}
with
\begin{align*}
	\lambda_R &= \varepsilon F_1 l \sum_{k=1}^\infty \left(\varepsilon \dfrac{F_1}{2\mu C_0}\right)^k + F_R l \left(\widetilde{C}^2 - \widetilde{C}\dfrac{F_R}{2\mu}\right)^{-1}, \\[2mm]
	F_R &= \sum_{k = 2}^\infty \varepsilon^k F_k, \quad
	\widetilde{C} = 2l - \dfrac{F_0 + \varepsilon F_1}{2\mu}, \\[2mm]
	\widetilde{C}^{-1} &= \left(2l - \dfrac{F_0 + \varepsilon F_1}{2\mu}\right)^{-1} = C_0^{-1} \sum_{k = 0}^\infty \left(\varepsilon \dfrac{F_1}{2\mu C_0}\right)^k, \quad
	C_0 = 2l - \dfrac{F_0}{2\mu}.
\end{align*}
This yields
\begin{align*}
	\lambda_R 
	&= \varepsilon F_1 l \sum_{k = 1}^\infty \left(\varepsilon \dfrac{F_1}{2\mu C_0}\right)^k + F_R l \left(C_0^{-1} \sum_{k=0}^\infty\left(\varepsilon \dfrac{F_1}{2\mu C_0}\right)^k\right)^{2} \sum_{k=0}^\infty \left(\dfrac{1}{2\mu}\left(\sum_{k=2}^\infty \varepsilon^k F_k \right) C_0^{-1}\left(\sum_{k=0}^\infty \left(\varepsilon \dfrac{F_1}{2\mu C_0}\right)^k\right)\right)^k.
\end{align*}
Note, that this expression contains only finitely many different series and hence, there exists an $\varepsilon_0 > 0$ such that all the occurring series converge absolutely for $\varepsilon \in (0,\varepsilon_0)$.
Therefore, by interchanging and multiplying the terms of the sums, the expression can be rewritten as
\begin{align*}
	\lambda_R = \sum_{k=2} \varepsilon^k \lambda^1_k,
\end{align*}
which converges absolutely.
Hence, we have
\begin{align*}
	\lepsdummy(1) = \lambdam + \varepsilon \snorm{\Omega_c} D_c + \sum_{k=2}^\infty \varepsilon^k \lambda_k^1
\end{align*}
for all $\varepsilon \in (0,\varepsilon_0)$.
Because of $\lepsdummy(1) \neq \lepsdummy(0)$ there exists a $m \in \N$, $m \geq 2$ such that $\lambda^1_m \neq 0$.
Hence, by choosing the smallest of these $m$, the assertion of the lemma follows.
\end{proof}

\begin{lem}\label{lem:induction_part2}
	Suppose that there exist $n \in \N$, $m \in \N$, $m \geq 2$, and $\varepsilon_0 > 0$ such that
	\begin{align*}
		\lepsdummy(j) = \sum_{k = 0}^{m-1} \varepsilon^k \lambda_k + \varepsilon^{m} \lambda^j_m + \calO(\varepsilon^{m+1})
	\end{align*}
	for $j = n, n+1$ and
	\begin{align}
		\lepsdummy(n+1) - \lepsdummy(n) = \varepsilon^m (\lambda^{n+1}_m - \lambda^n_m) + \calO(\varepsilon^{m+1}), \quad \lambda^{n+1}_m - \lambda^n_m \neq 0
		\label{eq:assertion}
	\end{align}
	for all $\varepsilon \in (0,\varepsilon_0)$.
	Then, for all $\widetilde{n} \geq n$ there exists an $\widetilde{\varepsilon}_0>0$ such that  
	\begin{align}
		\lepsdummy(\widetilde{n}) = \sum_{k=0}^{m-1} \varepsilon^k \lambda_k + \varepsilon^m \lambda^{\widetilde{n}}_m + \calO(\varepsilon^{m+1})
		\label{eq:expension}
	\end{align}
	and 
	\begin{align*}
		\lepsdummy(\widetilde{n}+1) - \lepsdummy(\widetilde{n}) = \varepsilon^m (\lambda^{\widetilde{n}+1}_m - \lambda^{\widetilde{n}}_m) + \calO(\varepsilon^{m+1}), \quad \lambda^{\widetilde{n}+1}_m - \lambda^{\widetilde{n}}_m \neq 0
	\end{align*}
	for all $\varepsilon \in (0,\widetilde{\varepsilon}_0)$.
	Furthermore, if there exists a $C > 0$ such that $\snorm{\lambda^{\widetilde{n}}_m} \leq C$ for all $\widetilde{n}$, then $\widetilde{\varepsilon}_0$ can be chosen independently of $\widetilde{n}$.
\end{lem} 
\begin{proof}
	Let be $j = n, n+1$.
	Then, analogously to the proof of lemma \ref{thm:monotonicity_force}, we obtain that the solution of \eqref{eq:weak_form_ECM} with the first Lamé parameter
	\begin{align*}
		\lambda^j = \chi_{\widetilde{\Omega}_m} \lambdam + \chi_{\widetilde{\Omega}_c} \lambdac^\varepsilon + \chi_{\widetilde{\Omega}_\text{dummy}} \lepsdummy(j)
	\end{align*}
	is given by
	\begin{align*}
		u^j = \sum_{k=0}^{m-1} \varepsilon^k u_k + \varepsilon^m u^j_m + \calO(\varepsilon^{m+1})
	\end{align*}
	and hence, the tensile force is given by
	\begin{align*}
		F^j = F[\lambda^j, \mu,l] = \underbrace{\sum_{k = 0}^{m-1} \varepsilon^k F_k}_{=: \widetilde{F}} + \varepsilon^m F^j_m + \calO(\varepsilon^{m+1}).
	\end{align*}
	Then, proceeding analogously as in the derivation of \eqref{Fdecomp}, we obtain that there exists an $\widetilde{\varepsilon}_0 >0$ (which can be chosen independently of $j$ if $F^j_m < \widetilde{C}$ for all $j$ and hence, if $\lambda^j_m < C$) such that for all $\varepsilon \in (0,\widetilde{\varepsilon}_0)$ it holds
	\begin{align*}
		\lepsdummy(j+1) &= \widetilde{\lambda} + \varepsilon^{m} F^j_m l \left(2l - \dfrac{\widetilde{F}}{2\mu}\right)^{-2} \sum_{k = 0}^\infty \left(\varepsilon \dfrac{F^j_m}{2\mu}\left(2l - \dfrac{\widetilde{F}}{2\mu}\right)^{-1}\right)^{k} + \calO(\varepsilon^{m+1}) \\
		&= \bar{\lambda} + \varepsilon^{m} F^j_m l \left(2l - \dfrac{F_0}{2\mu}\right)^{-2} + \calO(\varepsilon^{m+1}), 
	\end{align*}
where $\widetilde{\lambda}$ is determined by 
\begin{align*}
F[\widetilde{\lambda}, \mu, l] = \widetilde{F}
\end{align*}
and has an expansion of the form	
\begin{align*}
\widetilde{\lambda} = \sum_{k = 0}^\infty \varepsilon^k \widetilde{\lambda}_k
\end{align*}
with $\widetilde{\lambda}_k \in \R$. In particular, $\widetilde{\lambda}$ is independent of $j$.
By choosing $j = n$ we obtain, due to \eqref{eq:assertion}, that
	\begin{align*}
		\widetilde{\lambda} = \sum_{k=0}^{m-1} \varepsilon^k \lambda_k + \varepsilon^m \bar{\lambda}_m + \calO(\varepsilon^{m+1}).
	\end{align*}
	Therefore, by choosing $j = n+1$ we get that also $\lepsdummy(n+2)$ has an expansion of the form \eqref{eq:expension} and because of lemma \ref{thm:monotonicity_force} we have the implication
\begin{align*}
		\lambda_m^{n+1} \neq \lambda_m^n \Rightarrow F^{n+1}_m \neq F^{n}_m \Rightarrow  F^{n+1}_m l \left(2l - \dfrac{F_0}{2\mu}\right)^{-2} \neq F^n_m l \left(2l - \dfrac{F_0}{2\mu}\right)^{-2} \Rightarrow \lambda^{n+2}_m \neq \lambda^{n+1}_m,
	\end{align*}
	which yields the statement of the lemma by induction.
\end{proof}

Since the assertions of the lemmas \ref{thm:approximation_lambda_dummy}-\ref{lem:induction_part2} directly imply that $\lepsdummy(n)$ possesses a representation of the form \eqref{eq:representation_lepsdummy} which satisfies \eqref{neq}, the first step of the proof of \ref{lem:uniform_boundary_lambdaNM} is completed.

Now, we perform the second step of the proof. We prove by complete 
 induction using the iteration procedure from algorithm \ref{alg:emc_detail} that $\lambda_R^n:= \lambda_{m,n} + \varepsilon R_{\varepsilon,n}$ can be bounded independently of $n$ and $\varepsilon$ if $\varepsilon < \varepsilon_0$ for some $\varepsilon_0$ independent of $n$.
	Therefore, we consider for an arbitrary $n \in \N$:
	\begin{align*}
		\lepsdummy(n) = \lambdam + \varepsilon \snorm{\Omega_c} D_c + \varepsilon^{m} \lambda_R^n.  
	\end{align*}
	Then, using the ansatz
	\begin{align*}
		u = \sum_{k = 0}^{m-1} \varepsilon^k u^n_k + \varepsilon^m u_R^n
	\end{align*}
	for the solution of the tensile test with first Lamé parameter
	\begin{align*}
		\lambda^{\varepsilon,n} := \lambdam + \varepsilon (\chi_{\widetilde{\Omega}_c} + \chi_{\widetilde{\Omega}_\text{dummy}} \snorm{\Omega_c}) D_c + \varepsilon^m \chi_{\widetilde{\Omega}_\text{dummy}} \lambda^n_R =: \lambdam + \varepsilon \lpert + \varepsilon^m \lpert^R
	\end{align*}
	yields
	\begin{align*}
		&\int_\Omega \left[\lambdam \trace(\nabla^s u^n_0) I + 2 \mu \nabla^s u^n_0\right] : \nabla^s v dx = 0 \\
		&\int_\Omega \left[\lambdam \trace(\nabla^s u^n_k) l + 2\mu \nabla^s u^n_k\right] : \nabla^s v dx = - \int_\Omega \lpert \trace(\nabla^s u^n_{k-1}) \trace(\nabla^s v) dx, \quad k = 1,\dots,m-1 \\
		&\int_\Omega \left[\lambda^{\varepsilon,n} \trace(\nabla^s u_R^n) I + 2\mu \nabla^s u_R^n\right] : \nabla^s v dx \\
		&\pushright{= - \int_\Omega \left[\lpert^R \sum_{k=0}^{m-1} \varepsilon^k \trace(\nabla^s u^n_k) + \lpert \trace(\nabla^s u^n_{m-1})\right] \trace(\nabla^s v) dx}
	\end{align*}
	for all $v \in \calW_0$. Consequently, the functions $u^n_k$ are independent of $n$ for $k = 0, \dots m-1$ (and will, from now on, be denoted by $u_k$) and
		there exist constants $C_k > 0$ independently of $n$ such that
		\begin{align*}
			\norm[H^1(\Omega)]{u_k} \leq C_k.
		\end{align*}
	Hence, there exists a $C > 0$ independently of $n, \varepsilon$ such that a bound for $u_R^n$ is given by
	\begin{align*}
		\norm[H^1(\Omega)]{u_R^n} &\leq C \left[\norm[L^\infty(\Omega)]{\lpert^R} \sum_{k = 0}^{m-1} \varepsilon^k \norm[H^1(\Omega)]{u_k} + \norm[L^\infty(\Omega)]{\lpert} \norm[H^1(\Omega)]{u_{m-1}}\right] \\
		&\leq C \left[\norm[L^\infty(\Omega)]{\lpert^R} \left((1-\nu_0) l + \sum_{k = 1}^{m-1} \varepsilon^k C_k\right) + \norm[L^\infty(\Omega)]{\lpert} C_{m-1}\right] \\
		&= \widetilde{C}_1(\varepsilon, l) \snorm{\lambda_R^n} + \widetilde{C}_2
	\end{align*}
	with $\widetilde{C}_2 > 0$ independently of $n ,\varepsilon$ for $\varepsilon < 1$ and 
	\begin{align*}
		\widetilde{C}_1(\varepsilon, l) = C\left((1-\nu_0) l + \sum_{k = 1}^{m-1} \varepsilon^k C_k\right).
	\end{align*}

Then, the tensile force is given by
	\begin{align*}
		F[\lambda^{\varepsilon,n}, \mu, l] &= \sum_{k=0}^{m-1} \varepsilon^k F_k + \varepsilon^m F_R^n,
\end{align*}
where		
\begin{align*}		
		F^n_R &= \int_\Omega \lpert^R \sum_{k = 0}^{m-1} \varepsilon^k \trace(\nabla^s u_k) + \lpert \trace(\nabla^s u_{m-1}) + \lambda^{\varepsilon,n} \trace(\nabla^s u_R^n) dx.
	\end{align*}
	By using the estimate for $u_R^n$ in $H^1$ we obtain
	\begin{align*}
		\snorm{F_R^n} &\leq \snorm{\lambda_R^n} \sum_{k=0}^{m-1} \varepsilon^k C_k + \norm[L^\infty(\Omega)]{\lpert} C_{m-1} + \norm[L^\infty(\Omega)]{\lambda^{\varepsilon,n}} \norm[H^1(\Omega)]{u_R^n} \\
		&\leq \widetilde{\widetilde{C}}_1(\varepsilon,l) \snorm{\lambda_R^n} + \varepsilon^{m}\widetilde{\widetilde{C}}_2(\varepsilon,l) \snorm{\lambda_R^n}^2 +\widetilde{\widetilde{C}}_3
	\end{align*}
	with 
	\begin{align*}
		\widetilde{\widetilde{C}}_1(\varepsilon,l) &=  C(1+\lambdam + \varepsilon D_c)  \Big((1- \nu_0) l + \sum_{k=1}^{m-1} \varepsilon^k C_k\Big)+ \varepsilon^{m} \widetilde{C}_2,\\
\widetilde{\widetilde{C}}_2(\varepsilon,l)&={\widetilde{C}}_1(\varepsilon,l)	
\end{align*}	
and $\widetilde{\widetilde{C}}_3 > 0$ independently of $n, \varepsilon$ for $\varepsilon < 1$.	
	
Then, proceeding analogously as in the derivation of \eqref{Fdecomp}, we get
	\begin{align*}
		\lepsdummy(n+1) = \widetilde{\lambda} + \varepsilon^m F_R^n \left(K^2 - \varepsilon^m \dfrac{K F_R^n}{2\mu}\right)^{-1},
	\end{align*}
	where
	\begin{align*}
		K = 2l - \frac{1}{2\mu}\Big( \sum_{k = 0}^{m-1} \varepsilon^k F_k \Big)
	\end{align*}
	and $\widetilde{\lambda} = \lambdam + \varepsilon \snorm{\Omega_c} D_c + \varepsilon^m \widetilde{\lambda}_R$ with $ \widetilde{\lambda}_R \in \R$ independent of $n$.
	Then, by using that $\lepsdummy(n+1)$ also has the representation \eqref{eq:representation_lepsdummy}, we get 
	\begin{align}
		\snorm{\lambda_R^{n+1}} 
		&\leq \snorm{\widetilde{\lambda}_R} + \snormlr{F_R^n} \snormlr{K^2 - \varepsilon^m \dfrac{\snormlr{K} \snormlr{F^n_R}}{2\mu}}^{-1}.
		\label{eq:estimate0}
	\end{align}
	
To examine the right-hand side in detail, this motivates the definition of a function $\Phi$ with
\begin{align*}
	\Phi(x) := \snorm{\widetilde{\lambda}_R} + \snorm{f(x)} \snormlr{K^2 - \varepsilon^m \dfrac{\snorm{K} \snorm{f(x)}}{2\mu}}^{-1},
\end{align*}
where
\begin{align*}
	\snorm{f(x)} \leq \widetilde{\widetilde{C}}_1(\varepsilon,l) \snorm{x} + \varepsilon^{m}\widetilde{\widetilde{C}}_2(\varepsilon,l) \snorm{x}^2  + \widetilde{\widetilde{C}}_3.
\end{align*}
Now, the aim is to prove that there exists an $r>0$ such that $\Phi$ maps the ball around zero with radius $r$ onto itself.
For all $r > 0$ there exists $\widetilde{\varepsilon}_0(r)$ such that
\begin{align}
	0 \leq \varepsilon^m \dfrac{\snorm{K} \snorm{f(x)}}{2\mu}  \leq \dfrac{K^2}{2}
	\label{eq:estimate2}
\end{align}
for all $x \in B_r(0)$ and $0 < \varepsilon \leq \widetilde{\varepsilon}_0(r)$.
This yields
\begin{align*}
	\Phi(x) \leq \snorm{\widetilde{\lambda}_R} + \dfrac{2}{K^2} (\widetilde{\widetilde{C}}_1(\varepsilon,l) r +  \varepsilon^{m}\widetilde{\widetilde{C}}_2(\varepsilon,l) r^2   + \widetilde{\widetilde{C}}_3)
\end{align*}
for all $x \in B_r(0)$.
Hence, in order to gain the desired property of $\Phi$, it has to be ensured that
\begin{align*}
	\snorm{\widetilde{\lambda}_R} + \dfrac{2}{K^2}(\widetilde{\widetilde{C}}_1(\varepsilon,l) r + \varepsilon^{m}\widetilde{\widetilde{C}}_2(\varepsilon,l) r^2  + \widetilde{\widetilde{C}}_3) &\leq r ,
\end{align*}
which is equivalent to
\begin{align}
	& r \geq \left(\snorm{\widetilde{\lambda}_R} + \dfrac{2 \widetilde{\widetilde{C}}_3}{K^2}\right) \left(1 - \dfrac{2 (\widetilde{\widetilde{C}}_1(\varepsilon,l)+ \varepsilon^{m}\widetilde{\widetilde{C}}_2(\varepsilon,l) r)}{K^2}\right)^{-1}.
	\label{eq:estimate3}
\end{align}
Now, let $r_0 > 0$ such that
\begin{align*}
	r_0 \geq 2\left(\snorm{\widetilde{\lambda}_R} + \dfrac{2 \widetilde{\widetilde{C}}_3}{\underline{K}^2}\right),
\end{align*}
where $\underline{K}^2 > 0$ is a lower bound for $K^2$, which is independent of $\varepsilon$ for $\varepsilon < 1$.
Since $\widetilde{\widetilde{C}}_1$ is monotonically increasing with respect to $\varepsilon$ and $l$ and $\widetilde{\widetilde{C}}_1 \rightarrow 0$ for $(\varepsilon,l) \rightarrow 0$,
there exist $\varepsilon_0 \in (0, \widetilde{\varepsilon}_0(r_0))$ and $l_0 >0$ such that
$r=r_0$ satisfies \eqref{eq:estimate3} for all $0 < \varepsilon < {\varepsilon}_0$ and all $0<l<l_0$.

Due to \eqref{eq:estimate0}, this yields
\begin{align*}
\snorm{\lambda_R^{n}} \leq r_0 \;\, \Longrightarrow	\;\, \snorm{\lambda_R^{n+1}} \leq r_0  
\end{align*}
for all $0 < \varepsilon < \varepsilon_0$, where $\varepsilon_0$ does not depend on $n$.
Using $\lambda_R^0 = 0 \in B_{r_0}(0)$ we obtain by induction that $\lambda_R^{n} \in B_{r_0}(0)$ for all $n \in  \N$ and all $0 < \varepsilon < \varepsilon_0$.
\end{proof}

\textbf{Remark:} The bound for the prescribed deformation length $l$ at the top end, which was used to prove lemma \ref{lem:uniform_boundary_lambdaNM}, does not impose a further restriction to the convergence result of the embedded cell method because the result of algorithm \ref{alg:emc_detail} is independent of $l$ (cf. lemma \ref{lem:lambda_equiv_independent_of_l}).

Now, using the above lemmas \ref{thm:monotonicity_force} and  \ref{lem:uniform_boundary_lambdaNM}, we can prove the convergence of the embedded cell method. 

\begin{theorem}[convergence of the embedded cell method]\label{thm:convergence_ECM}
	Suppose, without loss of gen\-erality that $\lambdam \leq \lambdac^\varepsilon$.
	Furthermore, let $(\lepsdummy(n))_{n\in \N}$ be the sequence of dummy material parameters given by algorithm \ref{alg:emc_detail}.
	Then, there exists an $\varepsilon_0 > 0$ such that the sequence $(\lepsdummy(n))_{n\in \N}$ converges monotonically to some $\lepsdummy \in [\lambdam, \lambdac^\varepsilon]$ for all $\varepsilon \in (0, \varepsilon_0)$.
\end{theorem}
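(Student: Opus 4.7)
The plan is to establish (a) that the iterates $\lepsdummy(n)$ stay uniformly in $[\lambdam,\lambdac^\varepsilon]$ and (b) that the sequence $(\lepsdummy(n))_{n \in \N}$ is monotone in $n$; the theorem then follows from the standard fact that a bounded monotone real sequence converges.

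For the boundedness I would invoke lemma \ref{lem:uniform_boundary_lambdaNM} directly. The representation
\[
  \lepsdummy(n) = \lambdam + \varepsilon \snorm{\Omega_c} D_c + \varepsilon^m \lambda_{m,n} + \varepsilon^{m+1} R_{\varepsilon,n},
\]
together with the uniform $\calO(1)$-bounds on $\lambda_{m,n}$ and $R_{\varepsilon,n}$ and the fact that $\snorm{\Omega_c} \in (0,1)$ while $\lambdac^\varepsilon = \lambdam + \varepsilon D_c$, ensures that there is an $\varepsilon_0 > 0$ (independent of $n$) such that $\lepsdummy(n) \in [\lambdam,\lambdac^\varepsilon]$ for all $n$ and all $\varepsilon \in (0,\varepsilon_0)$. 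The degenerate case $\lepsdummy(1) = \lepsdummy(0)$ produces a stationary sequence for which the assertion is trivial, so from now on I assume $\lepsdummy(1) \neq \lepsdummy(0)$ and use the $m \geq 2$ provided by lemma \ref{lem:uniform_boundary_lambdaNM}.

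For the monotonicity I would combine lemmas \ref{thm:monotonicity_force} and \ref{lem:uniform_boundary_lambdaNM} with the explicit formula \eqref{lequn}. The iterates feed the step functions $\lambda^n$ only through $\widetilde{\Omega}_\text{dummy}$, so
\[
  \lambda^n - \lambda^{n-1} = \varepsilon^m (\lambda_{m,n} - \lambda_{m,n-1}) \chi_{\widetilde{\Omega}_\text{dummy}} + \calO(\varepsilon^{m+1}),
\]
and since $\snorm{\widetilde{\Omega}_\text{dummy}} > 0$, lemma \ref{thm:monotonicity_force} yields (uniformly in $n$ thanks to the bounds of lemma \ref{lem:uniform_boundary_lambdaNM}) that the sign of $F^{n+1} - F^n$ coincides with that of $\lepsdummy(n) - \lepsdummy(n-1)$. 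A direct differentiation of \eqref{lequn} gives
\[
  \frac{d \lambda^\text{equiv}}{dF} = l \left(2l - \frac{F}{2\mu}\right)^{-2} > 0,
\]
so the map $F \mapsto \lambda^\text{equiv}$ is strictly increasing; consequently $\lepsdummy(n+1) - \lepsdummy(n)$ shares its sign with $F^{n+1} - F^n$, and hence with $\lepsdummy(n) - \lepsdummy(n-1)$. By induction all increments share the common sign of $\lepsdummy(1) - \lepsdummy(0)$, so $(\lepsdummy(n))_n$ is monotone and the proof is complete.

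The main obstacle I anticipate is making the smallness threshold $\varepsilon_0$ truly independent of the iteration index. Each application of lemma \ref{thm:monotonicity_force} produces an $\calO(\varepsilon^{m+1})$ remainder whose constant a priori depends on norms of $\lepsdummy(n-1)$, and without an $n$-uniform control one could only carry out the sign argument finitely many times at each fixed $\varepsilon$. It is precisely the $n$-uniform expansion from lemma \ref{lem:uniform_boundary_lambdaNM} that lets the $\varepsilon_0$ from lemma \ref{thm:monotonicity_force} be chosen globally; once this is in place, the monotone bounded sequence converges to a limit $\lepsdummy \in [\lambdam,\lambdac^\varepsilon]$, which is the assertion of the theorem.
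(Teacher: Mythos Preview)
Your proposal is correct and follows essentially the same route as the paper: use lemma \ref{lem:uniform_boundary_lambdaNM} for the uniform-in-$n$ expansion giving boundedness in $[\lambdam,\lambdac^\varepsilon]$, then combine lemma \ref{thm:monotonicity_force} with the monotonicity of the map $F \mapsto \lambda^{\text{equiv}}$ from \eqref{lequn} to propagate the sign of the increment inductively, and conclude by the monotone convergence of bounded real sequences. The paper also singles out the degenerate cases $\lambdam=\lambdac^\varepsilon$ and $\snorm{\Omega_c}\in\{0,1\}$ explicitly, but your handling of $\lepsdummy(1)=\lepsdummy(0)$ and your explicit identification of the $n$-uniformity of $\varepsilon_0$ as the crux match the paper's argument.
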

\begin{proof}
If $\lambdam = \lambdac^\varepsilon$, then $\lepsdummy(0) = \lambda_m$ and as the material considered as embedded cell is homogeneous this yields $\lepsdummy(n) = \lambdam$ for all $n \in \N$ and $\varepsilon > 0$.
	Moreover, the cases $\snorm{\Omega_c} \in \{0,1\}$ yield that $\lepsdummy(n) = \lambdam$ and $\lepsdummy(n) = \lambdac^\varepsilon$, respectively, with the same argument as above.
	
	Therefore, let be $\lambdam < \lambdac^\varepsilon$ and $0 < \snorm{\Omega_c} < 1$.
	Because of lemma \ref{lem:uniform_boundary_lambdaNM} there exists $\tilde{\varepsilon}_0 > 0$ such that
	\begin{align*}
		\lepsdummy(n) = \lambdam + \varepsilon \snorm{\Omega} D_c + \calO(\varepsilon^2)
	\end{align*}
	uniformly for all $n \in \N$ and all $\varepsilon \in (0,\tilde{\varepsilon}_0)$. 
	Hence, with $D_c > 0$ according to the assumption there exists an $\varepsilon_0 > 0$ such that
	\begin{align*}
		\lepsdummy(n) - \lambdam &= \varepsilon \snorm{\Omega_c} D_c + \calO(\varepsilon^2) > 0 ,\\
		\lambdac^\varepsilon - \lepsdummy(n) &= \varepsilon (1- \snorm{\Omega_c}) D_c + \calO(\varepsilon^2) > 0
	\end{align*}
	for all $\varepsilon \in (0,\varepsilon_0)$.

Therefore, $\lepsdummy(n) \in [\lambdam, \lambdac^\varepsilon]$ for all $n \in \N$, which also yields that if the sequence converges to some $\lepsdummy \in \R$, this implies $\lepsdummy \in [\lambdam, \lambdac^\varepsilon]$.
Hence, it remains to prove that the sequence is monotone since a monotone and bounded sequence in $\R$ is convergent.

First, consider the case when $\lepsdummy(0) = \lepsdummy(1)$.
This means that the initial value is a fixed point of the iteration procedure and hence, this implies that $\lepsdummy(n) = \lepsdummy(0)$.
Hence, the sequence is constant and obviously monotonically convergent.

Now, let $\lepsdummy(0) < \lepsdummy(1)$. 
Because of lemma \ref{lem:uniform_boundary_lambdaNM}, this yields that there exist $m \in \N$, $m \geq 2$, and $\varepsilon_0 > 0$ such that
\begin{align*}
	\lepsdummy(1) = \lepsdummy(0) + \varepsilon^m \lambda_{m,1} + \calO(\varepsilon^{m+1}), \quad \lambda_{m,1} > 0
\end{align*}
for all $\varepsilon \in (0,\varepsilon_0)$.
Let
\begin{align*}
	\lambda^{\varepsilon,1} := \lambdam + \varepsilon \chi_{\widetilde{\Omega}_c}  D_c + \chi_{\widetilde{\Omega}_\text{dummy}} (\lepsdummy(1) -\lambdam).
\end{align*}
Then, lemma \ref{thm:monotonicity_force} can be applied and yields
\begin{align*}
	F[\lambda^0,\mu,l] < F[\lambda^{\varepsilon,1},\mu,l].
\end{align*}
Moreover, the mapping $\phi : \R \rightarrow \R$ defined by
\begin{align*}
	\phi(x) = \left(x - 2\mu l\right) \left(2l - \dfrac{x}{2\mu}\right)^{-1},
\end{align*}
which defines the equivalent first Lamé parameter, is monotone.
Hence, we get
\begin{align*}
	\lepsdummy(1) < \lepsdummy(2).
\end{align*}
Using the same arguments we obtain by induction that the sequence of dummy material parameters is monotonically increasing for all $\varepsilon \in (0, \varepsilon_0)$.

Finally, for $\lepsdummy(0) > \lepsdummy(1)$ we get analogously that the sequence of dummy material parameters is monotonically decreasing for all $\varepsilon \in (0, \varepsilon_0)$. 
\end{proof}

Theorem \ref{thm:convergence_ECM} yields the existence of a limit dummy parameter $\lepsdummy$.
The final aim of this section is to show that this limit admits an expansion of the form \eqref{eq:approximation_dummy_parameter}.
If such a result holds, then the embedded cell method \ref{alg:emc_detail} yields correct results up to errors of second order with respect to $\varepsilon$.

\begin{theorem}[justification of the embedded cell method]
\label{thm:correctness}
Let $\lambdam, \lambdac^\varepsilon := \lambdam + \varepsilon D_c$ for $\varepsilon > 0$ be the first Lamé parameters of a metal and a ceramic material, respectively.
Furthermore, suppose that these materials have the same shear modulus $\mu > 0$.
Then there exists an $\varepsilon_0 > 0$ such that the embedded cell methods defined by algorithm \ref{alg:emc_detail} converges to 
\begin{align}
	\lepsdummy = \lambdam + \varepsilon \snorm{\Omega_c} D_c + \calO(\varepsilon^2)
\end{align}
for all $\varepsilon \in (0, \varepsilon_0)$.
Moreover, if $\lambda^{\varepsilon,\text{hom}}$ is the effective material parameter given by theorem \ref{thm:approximation_result}, then 
\begin{align}
	\lambda^{\varepsilon,\text{hom}} - \lepsdummy = \calO(\varepsilon^2)
\end{align}
and consequently
\begin{align}
	F[\lambda^{\varepsilon,\text{hom}},\mu,l] - F[\lepsdummy,\mu,l] = \calO(\varepsilon^2)
\end{align}
for all $\varepsilon \in (0, \varepsilon_0)$.
\end{theorem}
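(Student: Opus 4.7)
The plan is to combine the three ingredients already established: the existence of the limit dummy parameter (Theorem~\ref{thm:convergence_ECM}), the uniform-in-$n$ expansion of the iterates (Lemma~\ref{lem:uniform_boundary_lambdaNM} together with Lemma~\ref{thm:approximation_lambda_dummy}), and the expansion of the effective parameter from analytical homogenization (Theorem~\ref{thm:approximation_result}). The proof of the justification statement should be essentially a passage to the limit plus a comparison step, with no new estimates required.

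First I would invoke Theorem~\ref{thm:convergence_ECM} to guarantee the existence of $\lepsdummy = \lim_{n \to \infty} \lepsdummy(n)$ for all $\varepsilon \in (0, \varepsilon_0)$ with $\varepsilon_0$ sufficiently small. Next, by Lemma~\ref{lem:uniform_boundary_lambdaNM} (or directly by Lemma~\ref{thm:approximation_lambda_dummy} combined with the uniform-in-$n$ boundedness obtained in the second step of its proof), every iterate admits the expansion
\begin{align*}
\lepsdummy(n) \;=\; \lambdam + \varepsilon |\Omega_c| D_c + \calO(\varepsilon^2),
\end{align*}
where the implicit constant is independent of $n$ and of $\varepsilon \in (0,\varepsilon_0)$. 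Passing to the limit $n \to \infty$ preserves the expansion and yields the first assertion
\begin{align*}
\lepsdummy \;=\; \lambdam + \varepsilon |\Omega_c| D_c + \calO(\varepsilon^2).
\end{align*}

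Comparing this directly with the expansion of $\lambda^{\varepsilon,\text{hom}}$ provided by Theorem~\ref{thm:approximation_result}, namely $\lambda^{\varepsilon,\text{hom}} = \lambdam + \varepsilon |\Omega_c| D_c + \calO(\varepsilon^2)$, yields $\lambda^{\varepsilon,\text{hom}} - \lepsdummy = \calO(\varepsilon^2)$ by subtraction. This is the main content of the theorem and the essential conceptual observation: both homogenization procedures produce the same leading-order correction $\varepsilon |\Omega_c| D_c$, because in both cases the first-order term is determined purely by the volume fraction of the ceramic phase, and differences arising from the spatial arrangement of the phases enter only at order $\varepsilon^2$.

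For the force estimate, I would use that for a constant first Lamé parameter the tensile force is given explicitly via the closed-form solution $u_0$ of \eqref{eq:u_0}, namely $F[\lambda,\mu,l] = ((1-\nu(\lambda))\lambda + 2\mu)l$ with $\nu(\lambda) = \lambda/(\lambda + 2\mu)$. Since this expression is smooth in $\lambda$ on a neighborhood of $\lambdam$, and both $\lepsdummy$ and $\lambda^{\varepsilon,\text{hom}}$ lie within a $\calO(\varepsilon^2)$-distance of each other and stay uniformly bounded in $\varepsilon$, a first-order Taylor expansion of $F[\,\cdot\,,\mu,l]$ at $\lambdam$ gives $F[\lambda^{\varepsilon,\text{hom}},\mu,l] - F[\lepsdummy,\mu,l] = \calO(\varepsilon^2)$. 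The main obstacle in the argument is not this final theorem itself but the uniform-in-$n$ control built up in Lemma~\ref{lem:uniform_boundary_lambdaNM}; once that is in hand, the present theorem is essentially a corollary obtained by taking limits and comparing expansions.
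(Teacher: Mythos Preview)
Your proposal is correct and follows essentially the same route as the paper: invoke Theorem~\ref{thm:convergence_ECM} for the existence of the limit, use the uniform-in-$n$ expansion from Lemma~\ref{lem:uniform_boundary_lambdaNM} to pass to the limit and obtain $\lepsdummy = \lambdam + \varepsilon|\Omega_c|D_c + \calO(\varepsilon^2)$, and then compare with Theorem~\ref{thm:approximation_result}. Your additional justification of the force estimate via the smoothness of $\lambda \mapsto F[\lambda,\mu,l]$ is a valid elaboration of a step the paper leaves implicit.
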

\begin{proof}
The existence of a limit $\lepsdummy$ is given by theorem \ref{thm:convergence_ECM}.
Furthermore, according to lemma \ref{lem:uniform_boundary_lambdaNM}, there exists an $\varepsilon_0 > 0$, a $m \in \N$, $m \geq 2$ and a constant $C > 0$ independently of $n$ such that
\begin{align*}
	\lepsdummy(n) = \lambdam + \varepsilon \snorm{\Omega_c} D_c + \varepsilon^m \lambda_R^n
\end{align*}
for all $\varepsilon \in (0, \varepsilon_0)$, and $\snorm{\lambda_R^n} \leq C$ for all $n \in \N$.
Since the sequence converges, this yields 
\begin{align*}
	\lepsdummy = \lambdam + \varepsilon \snorm{\Omega_c} D_c + \varepsilon^m \lambda_R
\end{align*}
with $\snorm{\lambda_R} \leq C$, which implies the first assertion of the theorem.
Finally, according to theorem \ref{thm:approximation_result}, we have
\begin{align*}
	\lambda^{\varepsilon,\text{hom}} = \lambdam + \varepsilon \snorm{\Omega_c} D_c + \calO(\varepsilon^2)
\end{align*}
for all $\varepsilon \in (0, \varepsilon_0)$, which directly implies the second assertion.
\end{proof}

\appendix
\section{Technical results}
The appendix contains some technical results needed in the previous sections but whose proofs are rather technical and are not directly contributing to the understanding of the topic.

\subsection{Well-posedness of the embedded cell method}\label{chap:appendix_well-posedness}
In this subsection the following two results will be proven.
The first one is that there exists an equivalent material parameter $\lambda^\text{equiv} \in \R^+$ under suitable assumption on the first Lamé parameter function $\lambda(x)$ of the material.
Furthermore, the second result is that the equivalent material parameter is independent of the choice of the tensile length $l$.
These results also imply that the result of the embedded cell algorithm \ref{alg:emc_detail} exists and is independent of $l$ and thus, the algorithm is well-posed at least for small perturbations.

\begin{lem}\label{lem:existence_equiv_param}
	Let $\varepsilon > 0$, $\lambda^\varepsilon \in L^\infty(\Omega)$ with $\lambda^\varepsilon = \lambda_0 + \varepsilon \lpert \geq 0$, $\mu \equiv \mu_0 > 0$ and $l \in \R$.
Then there exists an $\varepsilon_0 > 0$ such that the equivalent material parameter $\lambda^\text{equiv} \in \R$ according to definition \ref{def:equivalent_material_param} is non-negative for all $\varepsilon \in [0,\varepsilon_0)$.
\end{lem}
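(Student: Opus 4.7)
The plan is to reduce the question to analyzing an explicit rational expression for $\lambda^{\text{equiv}}$ in terms of the tensile force $F[\lambda^{\varepsilon},\mu,l]$. For a homogeneous material with constants $(\lambda,\mu)$, the solution of \eqref{eq:DGL2D}--\eqref{eq:BC2D3} is given by \eqref{eq:u_0}, and Theorem \ref{thm:tensile_force} yields $F[\lambda,\mu,l] = ((1-\nu)\lambda + 2\mu)l$ with $\nu = \lambda/(\lambda + 2\mu)$. Solving the defining identity $F[\lambda^{\text{equiv}},\mu,l] = F[\lambda^{\varepsilon},\mu,l]$ for $\lambda^{\text{equiv}}$ gives
\begin{align*}
\lambda^{\text{equiv}} \;=\; \frac{2\mu\,\bigl(F[\lambda^{\varepsilon},\mu,l] - 2\mu l\bigr)}{4\mu l - F[\lambda^{\varepsilon},\mu,l]},
\end{align*}
which is equivalent to the formula stated in the main text; the fact that numerator and denominator are linear in $l$ simultaneously establishes the claimed independence of $\lambda^{\text{equiv}}$ from $l$, provided the formula is well-defined.

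Next, I would inspect the base case $\varepsilon = 0$. Here $\lambda^{0} \equiv \lambda_{0}$ is constant, so $F_{0} := F[\lambda^{0},\mu,l] = ((1-\nu_{0})\lambda_{0} + 2\mu)l$, and a direct substitution yields $\lambda^{\text{equiv}}\big|_{\varepsilon=0} = \lambda_{0} \geq 0$ together with
\begin{align*}
4\mu l - F_{0} \;=\; \frac{4\mu^{2}}{\lambda_{0} + 2\mu}\,l \;>\; 0,
\end{align*}
so the denominator of the formula is strictly positive at $\varepsilon = 0$. Theorem \ref{thm:approximation_result_u} furnishes $\|u^{\varepsilon} - u_{0}\|_{H^{1}(\Omega)} = \calO(\varepsilon)$, and combining this with Theorem \ref{thm:tensile_force} gives $F[\lambda^{\varepsilon},\mu,l] \to F_{0}$ as $\varepsilon \to 0$. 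Hence there exists $\varepsilon_{0} > 0$ such that the denominator is uniformly bounded away from zero on $[0,\varepsilon_{0})$, so the formula defines $\lambda^{\text{equiv}}(\varepsilon) \in \R$ for every such $\varepsilon$, and $\lambda^{\text{equiv}}(\varepsilon) \to \lambda_{0}$ as $\varepsilon \to 0$.

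To conclude non-negativity I would split into two subcases. If $\lambda_{0} > 0$, continuity at $\varepsilon = 0$ immediately yields $\lambda^{\text{equiv}}(\varepsilon) > 0$ after possibly shrinking $\varepsilon_{0}$. The delicate subcase is $\lambda_{0} = 0$: then the standing hypothesis $\lambda_{0} + \varepsilon \lpert \geq 0$ for all $\varepsilon \in (0,\varepsilon_{0})$ forces $\lpert \geq 0$ almost everywhere on $\Omega$, and the monotonicity of the tensile force proven in Lemma \ref{thm:monotonicity_force} gives $F[\lambda^{\varepsilon},\mu,l] \geq F_{0}$. Since the scalar mapping $F \mapsto 2\mu(F - 2\mu l)/(4\mu l - F)$ is strictly increasing on the interval where its denominator is positive, this monotonicity transfers to $\lambda^{\text{equiv}}$ and yields $\lambda^{\text{equiv}}(\varepsilon) \geq \lambda_{0} = 0$.

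The main obstacle is precisely this degenerate case $\lambda_{0} = 0$: continuity of a map vanishing at $\varepsilon = 0$ does not by itself rule out negative values, so one genuinely needs the monotonicity from Lemma \ref{thm:monotonicity_force} combined with the sign constraint imposed by $\lambda^{\varepsilon} \geq 0$ in order to close the argument. Once $\lambda_{0} > 0$ is assumed, the proof collapses to a routine continuity argument applied to the explicit rational expression above.
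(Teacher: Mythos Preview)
Your argument is correct, but it takes a different path from the paper. The paper computes the first-order $\varepsilon$-expansion of the tensile force explicitly via Theorem~\ref{thm:approximation_result_u}, obtaining $F[\lambda^\varepsilon,\mu,l]=F_0+\varepsilon F_1+\calO(\varepsilon^2)$ with $F_1=(1-\nu_0)^2 l\int_\Omega \lpert\,dx$; inserting this into the rational formula yields directly $\lambda^{\text{equiv}}=\lambda_0+\varepsilon\int_\Omega\lpert\,dx+\calO(\varepsilon^2)$, from which non-negativity for small $\varepsilon$ follows in one stroke (the ``w.l.o.g.\ $\lpert\ge 0$'' in the paper is only genuinely used when $\lambda_0=0$, where it is forced by the hypothesis $\lambda^\varepsilon\ge 0$). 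Your route instead uses only $F[\lambda^\varepsilon,\mu,l]\to F_0$ to get continuity of $\lambda^{\text{equiv}}$ at $\varepsilon=0$, then splits into $\lambda_0>0$ (pure continuity) and $\lambda_0=0$ (where you invoke Lemma~\ref{thm:monotonicity_force}). This is logically sound; note only that Lemma~\ref{thm:monotonicity_force} requires the strict inequality $\int_\Omega\lpert\,dx>0$, so the edge case $\lpert=0$ a.e.\ must be disposed of separately (trivially $\lambda^{\text{equiv}}=0$ there). The paper's approach is more uniform and yields the explicit first-order expansion of $\lambda^{\text{equiv}}$ as a byproduct, while your approach avoids computing $F_1$ but leans on the heavier monotonicity lemma in the degenerate case.
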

\begin{proof}
	Let w.l.o.g be $l > 0$ and $\lpert \geq 0$ almost everywhere.
	Furthermore, let $u^\varepsilon \in \calW_l$ be the weak solution of the tensile test. 
	Then, according to theorem \ref{thm:approximation_result_u} $u^\varepsilon$ is given by
	\begin{align*}
		u^\varepsilon = u_0 + \varepsilon u_1 + \calO(\varepsilon^2).
	\end{align*}
	By analogous arguments as in the proof of lemma \ref{thm:approximation_lambda_dummy}
	we obtain that the tensile force $F[\lambda^\varepsilon, \mu,l]$ is given by
	\begin{align*}
		F[\lambda^\varepsilon, \mu,l] &= F_0 + \varepsilon F_1 + \calO(\varepsilon^2)
	\end{align*}
	with	
	\begin{align*}
		F_0 &= (1-\nu_0)l \lambda_0 + 2\mu l > 0, \\
		F_1 &= (1-\nu_0)^2l \int_\Omega \lpert dx \geq 0
	\end{align*}
	and $\lambda^\text{equiv}$ satisfies
	\begin{align*}
		\lambda^\text{equiv} &= (F_0 + \varepsilon F_1 - 2\mu l) \left(2l - \dfrac{F_0 + \varepsilon F_1}{2\mu}\right)^{-1} + \calO(\varepsilon^2) \\
		&=  \lambda_0 + \varepsilon \int_\Omega \lpert dx  + \calO(\varepsilon^2) \\
		&\geq  0
	\end{align*}
if $\varepsilon_0 > 0$ is sufficiently small, which proves the lemma.
\end{proof}

\begin{lem}\label{lem:lambda_equiv_independent_of_l}
	Let $\varepsilon > 0$, $\lambda^\varepsilon \in L^\infty(\Omega)$ with $\lambda^\varepsilon = \lambda_0 + \varepsilon \lpert \geq 0$, $\mu \equiv \mu_0 > 0$, $l \in \R$ and $\lambda^\text{equiv}$ be the equivalent material parameter according to definition \ref{def:equivalent_material_param}.
	Then, $\lambda^\text{equiv}$ does not depend on $l$.
\end{lem}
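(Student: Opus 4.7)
The plan is to exploit the linearity of the model equations in both the solution $u$ and the prescribed displacement $l$. The defining relation $F[\lambda^\text{equiv},\mu,l] = F[\lambda,\mu,l]$ together with the fact that each side scales linearly with $l$ will reduce the equation to one that no longer contains $l$.

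First I would establish that the solution map $l \mapsto u^{\lambda,l}$ is linear. Observe that the bilinear form on the left-hand side of \eqref{weakeq} is linear in $u$, so if $u^{\lambda,l} \in \calW_l$ is the unique weak solution for displacement $l$ (with $F \equiv 0$), then for any $\alpha \in \R$ the function $\alpha u^{\lambda,l}$ satisfies the same weak equation, belongs to $\calW_{\alpha l}$ (the Dirichlet data scale linearly, the mean-value constraint is preserved under scalar multiplication), and hence by the uniqueness part of Theorem \ref{thm:existence_uniqueness} equals $u^{\lambda,\alpha l}$.

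Next I would translate this into linearity of the tensile force. By Theorem \ref{thm:tensile_force} we have $F[\lambda,\mu,l] = \int_\Omega P_{22}\,dx$, and $P$ is linear in $\nabla^s u$. Combining this with the linearity of $u^{\lambda,l}$ in $l$ yields
\begin{align*}
F[\lambda,\mu,l] \;=\; l\, F[\lambda,\mu,1]
\end{align*}
for every $l \in \R$. Note that this identity applies both to the inhomogeneous parameter $\lambda$ and to any constant equivalent parameter $\lambda^\text{equiv}$, with the same shear modulus $\mu$.

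Finally, the defining equation of the equivalent parameter, $F[\lambda^\text{equiv},\mu,l] = F[\lambda,\mu,l]$, becomes $l\,F[\lambda^\text{equiv},\mu,1] = l\,F[\lambda,\mu,1]$. For $l \neq 0$ this is equivalent to $F[\lambda^\text{equiv},\mu,1] = F[\lambda,\mu,1]$, a condition in which $l$ does not appear; for $l = 0$ both sides vanish trivially and $\lambda^\text{equiv}$ is inherited from any $l \neq 0$. By Lemma \ref{lem:existence_equiv_param} a unique non-negative $\lambda^\text{equiv}$ exists for small $\varepsilon$, and the reduced equation determines it independently of $l$. There is no real obstacle here once the linearity in $l$ of $u^{\lambda,l}$ is spelled out; the only point requiring a little care is checking that the scaling respects the mean-zero constraint encoded in the definition of $\calW_l$, which is immediate because $\int_\Omega u_1\,dx = 0$ implies $\int_\Omega \alpha u_1\,dx = 0$.
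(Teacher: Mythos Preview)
Your proof is correct and follows essentially the same approach as the paper: both arguments establish that the map $l \mapsto F[\lambda,\mu,l]$ is linear (the paper phrases this as $F_l = a l$ via the composition of two linear operators, you via the scaling $u^{\lambda,\alpha l} = \alpha u^{\lambda,l}$), and then conclude that $\lambda^{\text{equiv}}$ is independent of $l$. The only cosmetic difference is in the last step: the paper plugs $F_l = a l$ into the explicit formula $\lambda^{\text{equiv}} = (F - 2\mu l)(2l - F/(2\mu))^{-1}$ and cancels $l$ directly, whereas you cancel $l$ in the defining relation $F[\lambda^{\text{equiv}},\mu,l] = F[\lambda,\mu,l]$ and then appeal to uniqueness.
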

\begin{proof}
Let $L^{-1} : \R \rightarrow H_m^1(\Omega) \times H^1(\Omega)$ be the operator which assigns the solution of the tensile test equation to a given prescribed tensile length.
	Then, according to theorem \ref{thm:existence_uniqueness}, $L^{-1}$ is linear and injective and consequently invertible on its range with inverse $L$, which is also a linear operator.
	Furthermore, the mapping $F_u : \calR(L^{-1}) \rightarrow \R$ with
	\begin{align*}
		u \mapsto \int_\Omega \lambda \trace(\nabla^s u) + 2\mu \partial_2 u_2 dx
	\end{align*}
	is linear for $u \in \calR(L^{-1})$.
	Therefore, the mapping $F_l : \R \rightarrow \R$ with $F_l = F_u \circ L$ is a linear mapping.
	Hence, there exists an $a \in \R$ independent of $l$ such that $F_l = al$.
	This yields that
	\begin{align*}
		\lambda^\text{equiv} = (F_l - 2\mu l) \left(2l - \dfrac{F_l}{2\mu}\right)^{-1} = (a - 2\mu) \left(2 - \dfrac{a}{2\mu}\right)^{-1},
	\end{align*}
	which is independent of $l$.
\end{proof}

\subsection{Generalized approximation results}\label{sec:generalized_approximation_results}
In this subsection, two generalized approximation results will be proven. 
The first one is a generalization of theorem \ref{thm:approximation_result_u} for higher error orders.
The second one states that the solution can be represented as a power series in $\varepsilon$.

\begin{theorem}\label{thm:generalized_approximation_u}
	Let $0 < \varepsilon <1$, $\mu \in \R$, $m \in \N$ and $\lambda^\varepsilon$ be given by
	\begin{align*}
		\lambda^\varepsilon = \sum_{k = 0}^{m} \varepsilon^k\lambda_k + \calO(\varepsilon^{m+1})
	\end{align*}
	with $\lambda_0 \in \R$ and $\lambda_k \in L^\infty(\Omega)$ for $k = 0, \dots, m$, independent of $\varepsilon$.
	Then there exist functions $u_k, k = 0, \dots, m$ with $u_0 \in \calW_l$ and $u_k \in\calW_0$ for $k \geq 1$ such that for the solution $u^\varepsilon \in \calW_l$ of 
	\begin{align}
		\int_\Omega \lambda^\varepsilon \trace(\nabla^s u^\varepsilon) \trace(\nabla^s v) + 2\mu \nabla^s u^\varepsilon : \nabla^s v dx = 0
		\label{eq:equation_tensile_test}
	\end{align}
	for all $v \in \calW_0$ satisfies
	\begin{align}
		u^\varepsilon - \sum_{k = 0}^m \varepsilon^k u_k = \calO(\varepsilon^{m+1})
       \label{estRext}	
	\end{align}
	with respect to the $H^1$-norm.
\end{theorem}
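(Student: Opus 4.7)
The plan is to extend the two-term ansatz used in Theorem \ref{thm:approximation_result_u} to an $m$-term formal expansion and then control the resulting remainder with the same a priori estimate \eqref{ellest}. Concretely, I would insert
\[
\tilde{u}^\varepsilon := \sum_{k=0}^{m}\varepsilon^{k} u_{k}
\]
into the weak formulation \eqref{eq:equation_tensile_test}, expand $\lambda^\varepsilon$, and collect terms of equal powers of $\varepsilon$. Since only $\lambda_0$ multiplies the leading coefficient while all $\lambda_j$ with $j\geq 1$ are perturbations, this produces a triangular hierarchy of elliptic problems: $u_0\in\calW_l$ solves
\[
\int_\Omega \lambda_0 \trace(\nabla^s u_0)\trace(\nabla^s v) + 2\mu\,\nabla^s u_0:\nabla^s v\,dx=0,
\]
and for $k=1,\dots,m$, $u_k\in\calW_0$ solves the same bilinear form equation with right-hand side
\[
F_k(v) = -\sum_{j=1}^{k}\int_\Omega \lambda_j\,\trace(\nabla^s u_{k-j})\,\trace(\nabla^s v)\,dx.
\]

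Each $F_k$ is a bounded linear functional on $\calW_0$ because $\lambda_j\in L^\infty(\Omega)$ and $u_{k-j}\in H^1(\Omega)$. The invariance condition \eqref{eq:invarience_right_side} required by Theorem \ref{thm:existence_uniqueness} is inherited automatically since $F_k$ only depends on $v$ through $\nabla^s v$, which annihilates $c\,e_1$. Thus Theorem \ref{thm:existence_uniqueness} yields, inductively, unique solutions $u_k$ with explicit $H^1$-bounds depending only on $\|\lambda_j\|_{L^\infty}$, $\|u_{k-j}\|_{H^1}$ and $l$, and in particular independent of $\varepsilon$.

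Next I would set $R^\varepsilon := u^\varepsilon - \tilde u^\varepsilon\in\calW_0$ and derive the equation it satisfies by subtracting the defining identities. By construction all terms of order $\varepsilon^{0},\dots,\varepsilon^{m}$ cancel, so that
\[
\int_\Omega \lambda^\varepsilon\,\trace(\nabla^s R^\varepsilon)\trace(\nabla^s v) + 2\mu\,\nabla^s R^\varepsilon:\nabla^s v\,dx = \tilde F(v),
\]
where $\tilde F(v)$ collects all contributions of order $\varepsilon^{m+1}$ and higher, namely the sums $\varepsilon^{m+j}\int_\Omega \lambda_j \trace(\nabla^s u_{m+1-j})\trace(\nabla^s v)\,dx$ for $j\geq 1$ together with the $\calO(\varepsilon^{m+1})$ part of $\lambda^\varepsilon$ tested against $\tilde u^\varepsilon$. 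Each of these has operator norm on $\calW_0'$ bounded by a constant times $\varepsilon^{m+1}$, uniformly in $\varepsilon\in(0,1)$, and the invariance \eqref{eq:invarience_right_side} again holds trivially.

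Applying Theorem \ref{thm:existence_uniqueness} to $R^\varepsilon$ (with zero Dirichlet data $l=0$) immediately yields $\|R^\varepsilon\|_{H^1(\Omega)}\leq C\|\tilde F\|_{\calW_0'}\leq C\varepsilon^{m+1}$, which is exactly \eqref{estRext}. The approach is entirely parallel to the proof of Theorem \ref{thm:approximation_result_u}; the only additional bookkeeping is the induction over $k$ to handle the hierarchy, and the only potential obstacle is verifying that the constants bounding $\|u_k\|_{H^1}$ and $\|\tilde F\|_{\calW_0'}$ are genuinely independent of $\varepsilon$. This is guaranteed because the coefficient $\lambda_0$ of the leading bilinear form is a constant independent of $\varepsilon$ and the $\lambda_j$'s entering the right-hand sides are $\varepsilon$-independent elements of $L^\infty(\Omega)$, so the constant $C$ from \eqref{ellest} is uniform at every stage.
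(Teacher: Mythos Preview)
Your proposal is correct and follows essentially the same route as the paper: define the hierarchy $u_0,\dots,u_m$ via Theorem~\ref{thm:existence_uniqueness}, bound each $\|u_k\|_{H^1}$ independently of $\varepsilon$, and then apply \eqref{ellest} once more to the remainder $R^\varepsilon$. One small bookkeeping slip: your description of $\tilde F$ as the sums $\varepsilon^{m+j}\int_\Omega \lambda_j\,\trace(\nabla^s u_{m+1-j})\trace(\nabla^s v)\,dx$ is mis-indexed---the paper writes the remainder cleanly as $\sum_{i+j>m}\varepsilon^{i+j}\int_\Omega \lambda_i\,\trace(\nabla^s u_j)\trace(\nabla^s v)\,dx$ (plus the $\calO(\varepsilon^{m+1})$ part of $\lambda^\varepsilon$), which you should adopt for the final write-up.
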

\begin{proof}
Let $u_0 \in \calW_l$ and $u_k \in \calW_0$, $k = 1,\dots, m$, respectively, the unique weak solutions of 
\begin{align*}
	&\int_\Omega \left[\lambda_0 \trace(\nabla^s u_0) I + 2\mu_0 \nabla^s u_0\right] : \nabla^s v dx = 0 ,\\
	&\int_\Omega \left[\lambda_0 \trace(\nabla^s u_k) I + 2\mu_0 \nabla^s u_k\right] : \nabla^s v dx = - \sum_{i=0}^{m-1} \int_\Omega \lambda_{m-i} \trace(\nabla^s u_i) \trace(\nabla^s v) dx 
\end{align*}
for all $v \in \calW_0$.
Notice that the solution $u_0$ is given by \eqref{eq:u_0}.
By theorem \ref{thm:existence_uniqueness} we have for $u_k$, $k = 1, \dots, m$:
\begin{align*}
	\norm[H^1(\Omega)]{u_k} &\leq C \norm[\calW_0']{\sum_{i = 0}^{m-1} \int_\Omega \lambda_{m-i} \trace(\nabla^s u_i) \trace(\nabla^s v) dx} \\
	&\leq C \sum_{i = 0}^{m-1} \norm[L^\infty(\Omega)]{\lambda_{m-i}} \norm[H^1(\Omega)]{u_i}.
\end{align*}
In particular, the $\lambda_k$ are independent of $\varepsilon$.
Hence, there exists constants $C_k > 0$ for $k = 0, \dots, m$ independent of $\varepsilon$ such that
\begin{align}
	\norm[H^1(\Omega)]{u_k} \leq C_k.
	\label{eq:estimates_uk_appendix}
\end{align}

Now, define
\begin{align}
	\uapprox^\varepsilon &:= \sum_{k = 0}^m \varepsilon^k u_k,
	\label{eq:ansatz_appendix}
\end{align}
which is an element of $\calW_l$. Then $\uapprox^\varepsilon$ satisfies
\begin{align*}
 	\int_\Omega \lambda^\varepsilon \trace(\nabla^s \uapprox^\varepsilon) \trace(\nabla^s v)   + 2\mu \nabla^s \uapprox^\varepsilon : \nabla^s v dx &= -\widetilde{F}(v)
\end{align*} 
for all $v \in \calW_0$,	
where 	
 \begin{align*}	
 	\widetilde{F}(v) &=  \sum_{i + j > m} \int_\Omega \varepsilon^{i+j}\lambda_i \trace(\nabla^s u_j) \trace(\nabla^s v) dx.
\end{align*}
Therefore, $R := u^\varepsilon - \uapprox^\varepsilon$ solves
\begin{align*}
	\int_\Omega \lambda^\varepsilon \trace(\nabla^s R) \trace(\nabla^s v)   + 2\mu \nabla^s R : \nabla^s v = \widetilde{F}(v)
\end{align*}
for all $v \in \calW_0$ and we can use theorem \ref{thm:existence_uniqueness} again to obtain
\begin{align*}
\norm[H^1(\Omega)]{R} &\leq	C \norm[\calW_0']{\widetilde{F}} \\
&= C \sup_{\substack{v \in \calW_0, \\\norm[H^1(\Omega)]{v} = 1}} \snormlr{\sum_{i + j > m} \int_\Omega \varepsilon^{i+j}\lambda_i \trace(\nabla^s u_j) \trace(\nabla^s v) dx} \\
	&\leq \varepsilon^{m+1} C \sum_{i+j > m} \varepsilon^{i+j-m-1} \norm[L^\infty(\Omega)]{\lambda_i} \norm[H^1(\Omega)]{u_j} \\
	&\leq \varepsilon^{m+1} C \sum_{i+j > m} \varepsilon^{i+j-m-1} \norm[L^\infty(\Omega)]{\lambda_i} C_j \\
&\leq \varepsilon^{m+1} \widetilde{C}	
\end{align*}
for constants $C,\widetilde{C} >0$, independent of $\varepsilon$, which yields \eqref{estRext}.
\end{proof}

\begin{theorem}\label{thm:generalization_power_series}
	There exists an $\varepsilon_0 > 0$ such that the solution $u^\varepsilon \in \calW_l$ of 
	\begin{align}
		\int_\Omega \lambda^\varepsilon \trace(\nabla^s u^\varepsilon) \trace(\nabla^s v) + 2\mu \nabla^s u^\varepsilon : \nabla^s v dx = 0
		\label{eq:equation_appendix_series}
	\end{align}
	for all $v \in \calW_0$, where $0 < \varepsilon \leq \varepsilon_0$,  $\lambda^\varepsilon = \lambda_0 + \varepsilon \lpert$, $\lpert \in L^\infty(\Omega)$ and $\mu \in \R^+$, has  the representation
	\begin{align}
		u^\varepsilon = \sum_{k=0}^\infty \varepsilon^k u_k,
		\label{eq:ansatz_appendix_series}
	\end{align}
	which converges absolutely in $H^1$ for $\varepsilon \in [0, \varepsilon_0)$, $u_0 \in \calW_l$ and $u_k \in \calW_0$ for $k \geq 1$.
	Furthermore, there exists a constant $C > 0$ independently of $k, \varepsilon$ such that
	\begin{align*}
		\norm[H^1(\Omega)]{u_k} \leq C^k \norm[L^\infty(\Omega)]{\lpert}^k \norm[H^1(\Omega)]{u_0}
	\end{align*}
	for all $k \geq 0$.
\end{theorem}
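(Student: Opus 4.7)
The plan is to construct the coefficients $u_k$ recursively by matching formal powers of $\varepsilon$, derive a geometric bound on $\|u_k\|_{H^1(\Omega)}$ from the a priori estimate \eqref{ellest}, and then verify that the resulting series actually equals the (unique) weak solution $u^\varepsilon$.

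First, I would introduce the bilinear forms $a_0(u,v) = \int_\Omega \lambda_0 \trace(\nabla^s u) \trace(\nabla^s v) + 2\mu \nabla^s u : \nabla^s v \, dx$ and $b(u,v) = \int_\Omega \lpert \trace(\nabla^s u) \trace(\nabla^s v) \, dx$, so that the equation \eqref{eq:equation_appendix_series} reads $a_0(u^\varepsilon,v) + \varepsilon\, b(u^\varepsilon,v) = 0$ for all $v \in \calW_0$. Define $u_0 \in \calW_l$ as the unique weak solution of $a_0(u_0, v) = 0$ (given by \eqref{eq:u_0}), and recursively $u_k \in \calW_0$ as the unique weak solution of
\begin{align*}
a_0(u_k, v) = -b(u_{k-1},v) \qquad \text{for all } v \in \calW_0.
\end{align*}
Theorem \ref{thm:existence_uniqueness} guarantees existence and uniqueness of each $u_k$ and yields a constant $C_1 > 0$ depending only on $\mu_0$ (hence independent of $k$ and of $\varepsilon$) such that $\|u_k\|_{H^1(\Omega)} \leq C_1 \|b(u_{k-1},\cdot)\|_{\calW_0'}$. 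Estimating the right-hand side by Cauchy–Schwarz gives $\|b(u_{k-1},\cdot)\|_{\calW_0'} \leq C_2 \|\lpert\|_{L^\infty(\Omega)} \|u_{k-1}\|_{H^1(\Omega)}$, so by induction
\begin{align*}
\|u_k\|_{H^1(\Omega)} \leq (C_1 C_2)^k \|\lpert\|_{L^\infty(\Omega)}^k \|u_0\|_{H^1(\Omega)},
\end{align*}
which is the desired bound with $C := C_1 C_2$.

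Next, I would fix $\varepsilon_0 < (2 C \|\lpert\|_{L^\infty(\Omega)})^{-1}$ so that for every $\varepsilon \in (0,\varepsilon_0)$ the series $\sum_{k=0}^\infty \varepsilon^k u_k$ converges absolutely in $H^1(\Omega)$ by comparison with a geometric series; call its limit $\tilde u^\varepsilon$. Since $u_0 \in \calW_l$ and $u_k \in \calW_0$ for $k \geq 1$ and both are closed affine subspaces of $H^1(\Omega)$, the limit $\tilde u^\varepsilon$ lies in $\calW_l$.

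Finally, I would show $\tilde u^\varepsilon = u^\varepsilon$ by a telescoping argument. Writing $U^N := \sum_{k=0}^N \varepsilon^k u_k$ and using the defining equations for the $u_k$,
\begin{align*}
a_0(U^N,v) + \varepsilon\, b(U^N,v) &= \sum_{k=0}^N \varepsilon^k a_0(u_k,v) + \sum_{k=0}^N \varepsilon^{k+1} b(u_k,v) \\
&= -\sum_{k=1}^N \varepsilon^k b(u_{k-1},v) + \sum_{k=0}^N \varepsilon^{k+1} b(u_k,v) = \varepsilon^{N+1} b(u_N,v).
\end{align*}
The geometric bound forces the right-hand side to tend to $0$ as $N \to \infty$ uniformly in $v$ with $\|v\|_{H^1(\Omega)} = 1$. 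Continuity of $a_0$ and $b$ in the first argument together with $U^N \to \tilde u^\varepsilon$ in $H^1(\Omega)$ then yields $a_0(\tilde u^\varepsilon,v) + \varepsilon\, b(\tilde u^\varepsilon,v) = 0$ for all $v \in \calW_0$, so by the uniqueness part of Theorem \ref{thm:existence_uniqueness} we get $\tilde u^\varepsilon = u^\varepsilon$. The only non-routine point is verifying that the constant $C_1$ from \eqref{ellest} is genuinely independent of $k$; this is immediate from the statement of Theorem \ref{thm:existence_uniqueness}, since $C_1$ depends only on $\mu^\ast$, and the shear modulus in the bilinear form $a_0$ is the same for every $k$.
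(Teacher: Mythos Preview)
Your proof is correct and follows essentially the same route as the paper: define $u_0$ and the $u_k$ recursively via the same hierarchy of linear problems, use the a~priori estimate from Theorem~\ref{thm:existence_uniqueness} to get the geometric bound $\|u_k\|_{H^1}\le C^k\|\lpert\|_{L^\infty}^k\|u_0\|_{H^1}$, deduce absolute convergence for small $\varepsilon$, and then pass to the limit using continuity of the bilinear form. Your telescoping computation and the explicit appeal to uniqueness make the final identification $\tilde u^\varepsilon=u^\varepsilon$ more transparent than the paper's one-line remark, but the underlying argument is the same.
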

\begin{proof}
Inserting the ansatz \eqref{eq:ansatz_appendix_series} into \eqref{eq:equation_appendix_series} yields
\begin{align*}
	&\int_\Omega \left[\lambda_0 \trace(\nabla^s u_0) I + 2 \mu \nabla^s u_0\right] : \nabla^s v dx = 0 ,\\
	&\int_\Omega \left[\lambda_0 \trace(\nabla^s u_k) I + 2 \mu \nabla^s u_k\right] : \nabla^s v dx = - \int_\Omega \lpert \trace(\nabla^s u_{k-1}) \trace(\nabla^s v) dx
\end{align*}
for $k \geq 1$ and all $v \in \calW_0$.
Notice that $u_0 \in \calW_0$ is given by \eqref{eq:u_0}.
By theorem \ref{thm:existence_uniqueness} there exists a $C > 0$ being independent of $\varepsilon > 0$ such that
\begin{align*}
	\norm[H^1(\Omega)]{u_k} \leq C \norm[L^\infty(\Omega)]{\lpert} \norm[H^1(\Omega)]{u_{k-1}}.
\end{align*}
By induction we get
\begin{align*}
\|u_k\|_{H^1(\Omega)} \leq C^k \|{\lpert}\|_{L^\infty(\Omega)}^k \|u_0\|_{H^1(\Omega)}.
\end{align*}
Hence, the series \eqref{eq:ansatz_appendix_series} converges absolutely for
\begin{align*}
\varepsilon < \dfrac{1}{C \|{\lpert} \|_{L^\infty(\Omega)}^k}
\end{align*}
and since $H^1(\Omega)$ is a Banach space the series also converges in $H^1$.
Because the left-hand-side of the equation \eqref{eq:equation_appendix_series} defines a continuous bilinear form in $H^1$ it follows that \eqref{eq:ansatz_appendix_series} is a solution of \eqref{eq:equation_appendix_series} and hence the assertion of the theorem follows.
\end{proof}


\end{document}